\newcommand{\bqq}{\begin{equation}}
\newcommand{\eqq}{\end{equation}}
\newcommand{\bqs}{\begin{equation*}}
\newcommand{\eqs}{\end{equation*}}
\newcommand{\R}{\mathbb{R}}
\newcommand{\md}{\mathrm{d}}
\newcommand{\K}{\mathcal{K}}
\newtheorem{lem}{Lemma}[section]
\newtheorem{prop}[lem]{Proposition}
\newtheorem{cor}[lem]{Corollary}
\newtheorem{rmk}[lem]{Remark}
\newtheorem{defi}[lem]{Definition}
\newtheorem{hyp}[lem]{Hypothesis}
\newtheorem{conj}[lem]{Conjecture}
\numberwithin{equation}{section}
\title{Asymptotic behavior of nonlocal bistable reaction-diffusion equations}
\author[1]{Christophe Besse}
\author[2]{Alexandre Capel}
\author[1]{Gr\'egory Faye\footnote{ \texttt{gregory.faye@math.univ-toulouse.fr}}}
\author[3]{Guilhem Fouilh\'e}
\affil[1]{Institut de Math\'ematiques de Toulouse ; UMR5219, Universit\'e de Toulouse ; CNRS UPS IMT, F-31062 Toulouse Cedex 9 France}
\affil[2]{Universit\'e de Montpellier, France}
\affil[3]{Universit\'e Paul Sabatier, Toulouse 3, France}
\begin{document}
\maketitle

\begin{center}
{\it Dedicated to the memory of Masayasu Mimura, in deep gratitude for his inspiration.}
\end{center}

\begin{abstract}
 In this paper, we study the asymptotic behavior of the solutions of nonlocal bistable reaction-diffusion equations starting from compactly supported initial conditions. Depending on the relationship between the nonlinearity, the interaction kernel and the diffusion coefficient, we show that the solutions can either: propagate, go extinct or remain pinned. We especially focus on the latter regime where solutions are pinned by thoroughly studying discontinuous ground state solutions of the problem for a specific interaction kernel serving as a case study. We also present a detailed numerical analysis of the problem.
\end{abstract}

{\noindent \bf Keywords:} nonlocal coupling, pinning phenomena, threshold of propagation\\

{\noindent \bf MSC numbers:} 35K57, 35B40, 45K05\\

\section{Introduction}

We investigate the long time behavior of the solutions of the following Cauchy problem
\bqq
\label{edp}
\left\{
\begin{split}
\partial_t u &= d\left(-u+\K *u\right)+f(u), \quad t>0 \quad x\in\R, \\
u(t=0,x) & = \mathds{1}_{[-\ell,\ell]}(x), \quad x\in\R,
\end{split}
\right.
\eqq
where $\K*u$ stands for the convolution on the real line with $d>0$ and $\ell>0$ some parameters. Throughout, the nonlinearity $f$ will be assumed to be of bistable type in the sense that it satisfies the following hypothesis.
\begin{hyp}\label{hypf}
The function $f:\R\to \R$ is smooth and there exists some $a\in(0,1)$ such that $f(0)=f(a)=f(1)=0$ with $f<0$ on $(0,a)$ and $f>0$ on $(a,1)$. We further assume that $f'(0)<0$, $f'(1)<0$ and $f'(a)>0$ together with the condition that $\int_{0}^1f(u)\md u>0$.
\end{hyp}

The above equation \eqref{edp} belongs to the class of reaction-diffusion equations which are widely used to describe the spatio-temporal evolution of species under diffusion and demographic effects. Here, the bistable nature of $f$ traduces a strong Allee effect. The diffusion is nonlocal and models long-range dispersal properties. The kernel $\K$ can be interpreted as the probability density of individuals moving from position $y$ to position $x$. Here, we have assumed that the environment is homogeneous which explains the convolution term and ensures the translation invariance of the equation. A this stage, we will make further natural assumptions on the kernel $\K$.

\begin{hyp}\label{hypK}
We require that the convolution kernel $\K$ satisfies
\begin{itemize}
\item[(i)] $\K,\K'\in L^1(\R)$, with $\int_\R \K(x)\md x =1$;
\item[(ii)] $0\leq \K(x)=\K(-x)<+\infty$ for all $x\in\R$;
\item[(iii)] $\int_\R\K(x)|x|\md x<+\infty$.
\end{itemize}
\end{hyp}

Under the above two hypotheses on $f$ and $\K$, the Cauchy problem \eqref{edp} is well-posed in the sense that there exists a unique solution $u\in\mathscr{C}^1\left((0,+\infty),L^1(\R)\cap L^\infty(\R) \right)\cap \, \mathscr{C}^0\left([0,+\infty),L^1(\R)\cap L^\infty(\R) \right)$ which is global and satisfies $0 < u(t,x) < 1$ for all $t>0$ and $x\in\R$ as a consequence of the comparison principle which applies in this case. Our aim is thus to characterize the asymptotic behavior of the solution $u$ as a function of the diffusion coefficient $d$, the nonlinearity $f$, the kernel $\K$ and the parameter $\ell$ which represents the initial implantation of the population.  In the purely local case, that is when the nonlocal diffusion is replaced by the Laplacian, it is well-known that a sharp threshold occurs. More precisely, at given fixed $d$ and bistable nonlinearity $f$, there exists a unique critical value $\ell_*>0$ such that the following dichotomy holds:
\begin{itemize}
\item for all $\ell\in(0,\ell_*)$, the solution $u$ goes extinct;
\item for all $\ell>\ell_*$, the solution $u$ propagates across the full domain.
\end{itemize}
The interpretation of the above result is somehow intuitive. If initially, the mass of the population is too small, due to the strong Allee effect, the population will go extinct. On the other hand, if the initial mass is large enough, the population will survive and spread across the full domain. These two results go back to the pioneer works of Kanel \cite{Kanel} and Flores \cite{Flores} (for {\bf extinction}) and Fife \& McLeod \cite{FMcL} (for {\bf propagation}). The fact that there is a sharp threshold is a more striking result and was only obtained recently by Zlat\v{o}s \cite{Zlatos}, and later refined by Du \& Matano  \cite{DM10}, Pol\'{a}\v{c}ik \cite{Pol11} and Muratov \& Zhong \cite{MZ13,MZ17}. At the threshold, the solution converges towards a non trivial steady state of the equation, often called a {\em ground state}. This scenario, which is unstable from a dynamical systems point of view, reflects a sort of {\bf stagnation} mechanism where the solution neither propagates nor goes extinct.  Obtaining quantitative estimates on the threshold value $\ell_*$ is a more delicate task and we refer to the recent work of Alfaro et al. \cite{ADF20} for some progress and references therein.

\begin{figure}[t!]
  \centering
 \subfigure[Extinction.]{ \includegraphics[width=.3\textwidth]{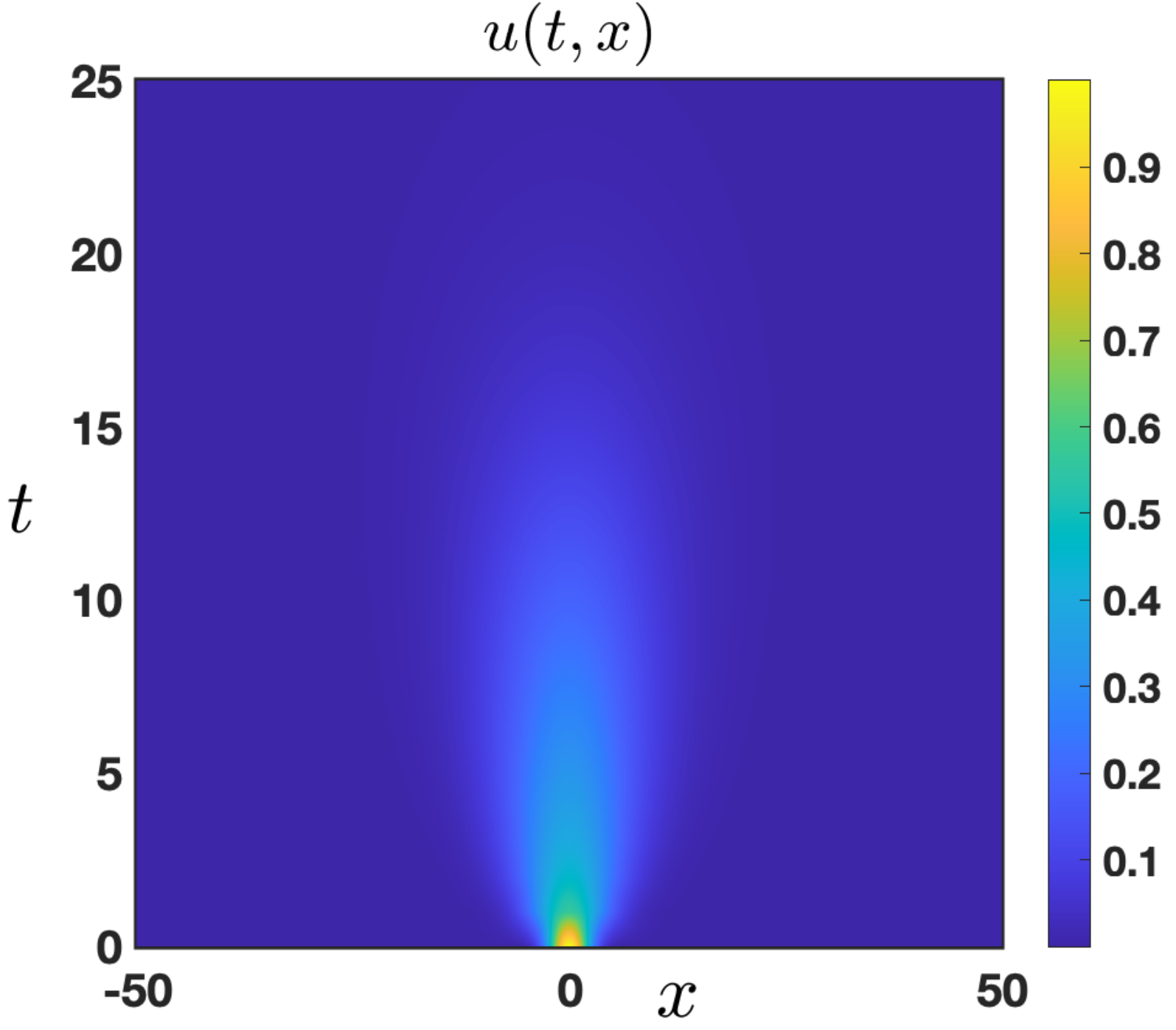}}\hspace{0.1in}
   \subfigure[Stagnation.]{\includegraphics[width=.29\textwidth]{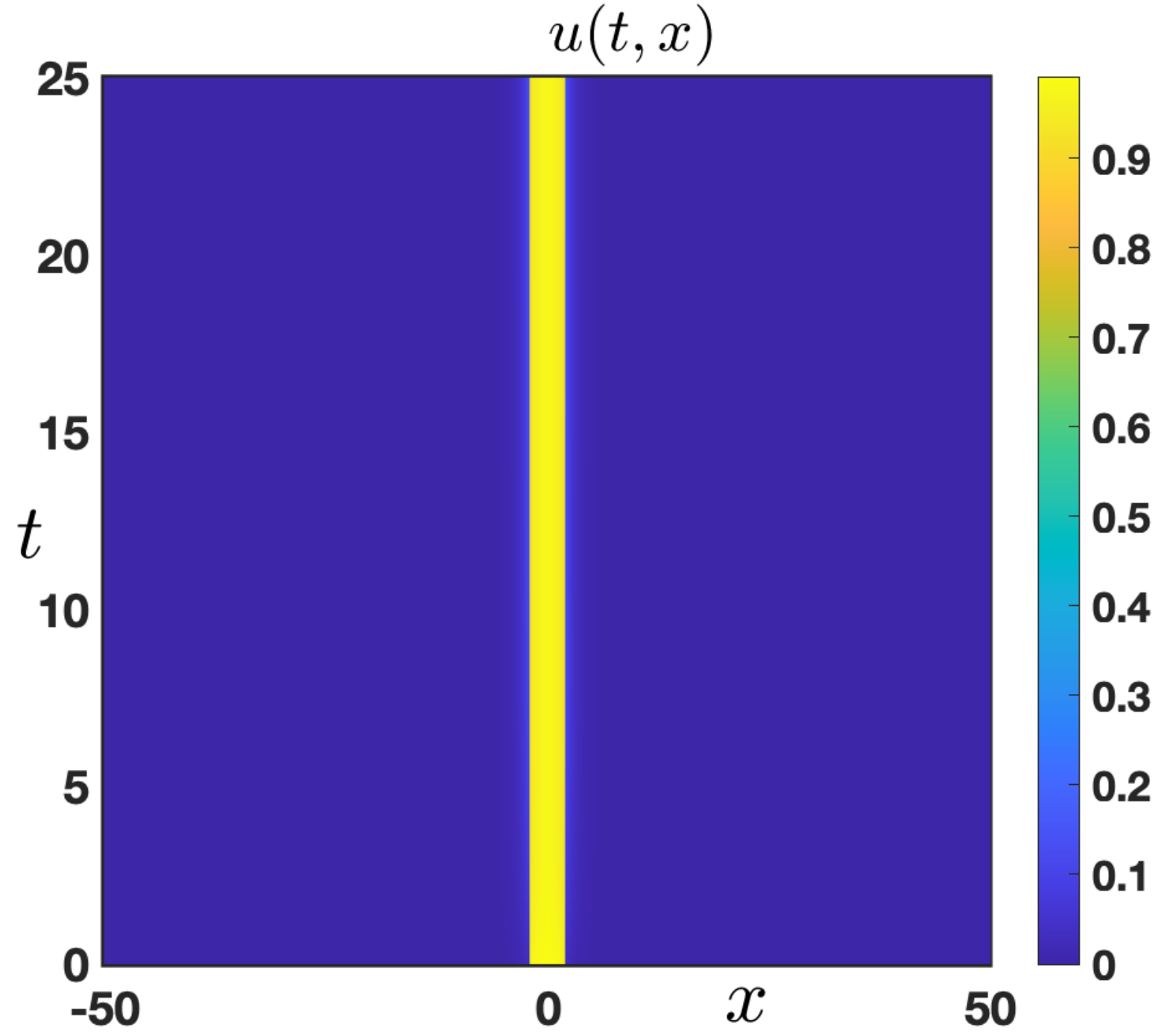}}\hspace{0.1in}
   \subfigure[Propagation.]{\includegraphics[width=.3\textwidth]{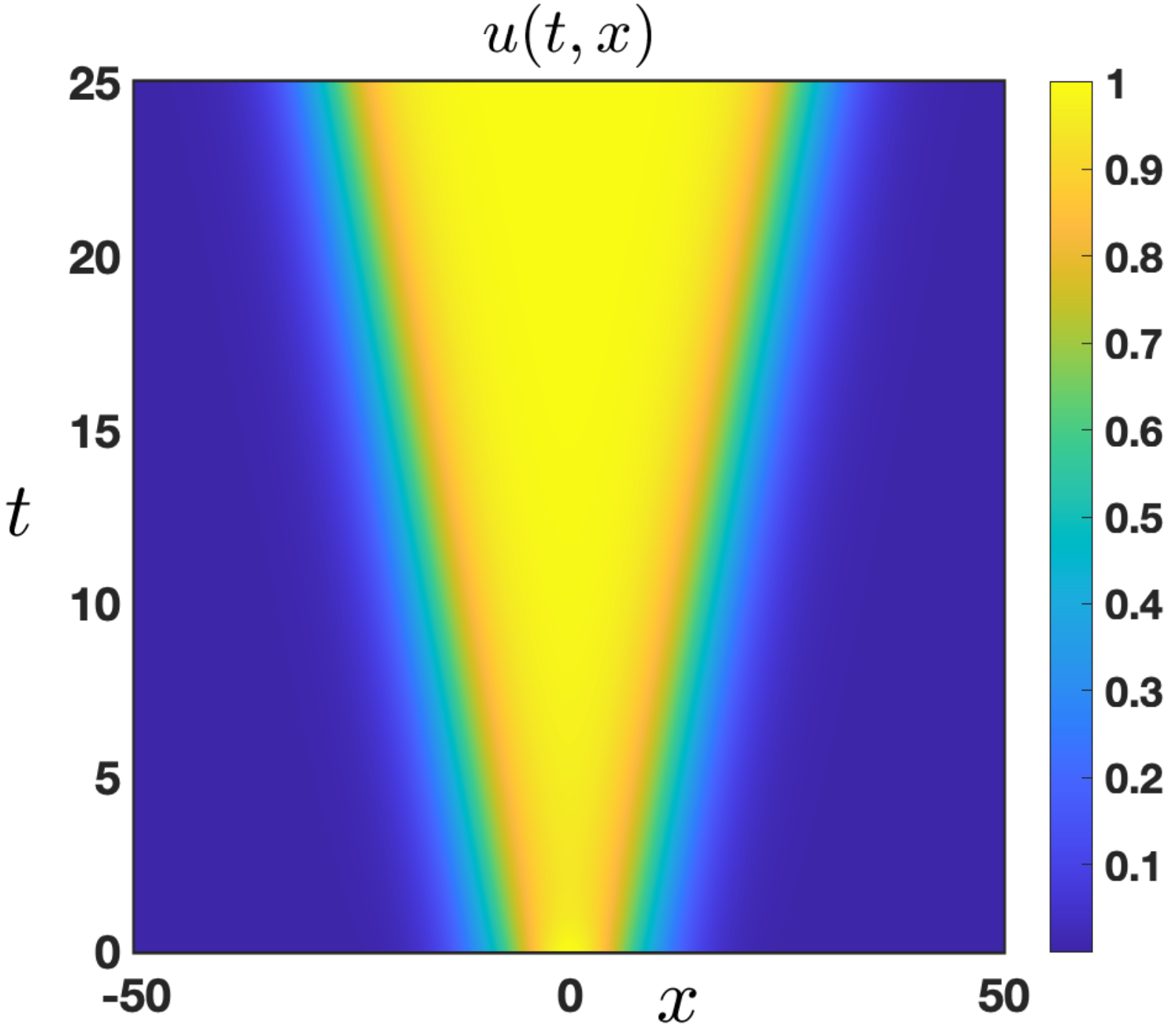}}
  \caption{Illustration of the different possible asymptotic behaviors of the solutions of the Cauchy problem~\eqref{edp}. }
  \label{fig:asymptotic}
\end{figure}

Before proceeding with an informal summary of our main results, we properly define what we shall refer to as extinction, propagation, and stagnation in the rest of the paper.  We refer to Figure~\ref{fig:asymptotic} for an illustration of these different asymptotic behaviors.

\begin{defi}Assume that $f$ and $\K$ satisfy Hypothesis~\ref{hypf} and~\ref{hypK} respectively. Let $\ell>0$ and $u_\ell\in\mathscr{C}^1\left((0,+\infty),L^1(\R)\cap L^\infty(\R) \right)\cap \, \mathscr{C}^0\left([0,+\infty),L^1(\R)\cap L^\infty(\R) \right)$ be the solution of the Cauchy problem \eqref{edp}. 
\begin{itemize}
\item {\bf Extinction:} If $u_\ell$ satisfies
\bqs
u_\ell(t,x)\underset{t\to +\infty}{\longrightarrow} 0 \text{ uniformly in } x\in\R,
\eqs
then we say that there is extinction for that given value of $\ell>0$.
\item {\bf Propagation:} If $u_\ell$ satisfies
\bqs
u_\ell(t,x)\underset{t\to +\infty}{\longrightarrow} 1 \text{ locally uniformly in } x\in\R,
\eqs
then we say that there is propagation for that given value of $\ell>0$.
\item {\bf Stagnation:} If $u_\ell$ satisfies
\bqs
u_\ell(t,x)\underset{t\to +\infty}{\longrightarrow} U(x) \text{ uniformly in } x\in\R\backslash\{\pm\ell\},
\eqs
where $U>0$ is an even stationary ground state solution of \eqref{edp} verifying $0=d(-U+\K*U)+f(U)$ with $U(\pm\infty)=0$ with possible discontinuity points at $x=\pm\ell$,  then we say that there is stagnation for that given value of $\ell>0$.
\end{itemize}
\end{defi}

\paragraph{Informal summary of main results.} A natural question that we shall address in this present work is to understand if such sharp thresholds of propagation persists when the diffusion is no longer local but takes the form of nonlocal dispersal mechanism as expressed in \eqref{edp}. In a nutshell, our results show that when the diffusion coefficient is large enough, then similar threshold phenomena are expected than in the local setting. More precisely, we prove that extinction always occurs if the diffusion coefficient is large enough (see Proposition~\ref{propext}), and that propagation occurs whenever there exists a smooth traveling front solution with non zero wave speed (see Proposition~\ref{proppropa}).  On the other hand, for small diffusion coefficient, the picture is totally different from the local case. It is possible to find regimes where extinction never occurs (see Proposition~\ref{prop51} and Proposition~\ref{prop52}). It is also possible to exhibit regimes where propagation never occurs (see Proposition~\ref{prop53}). And finally, lack of both propagation and extinction can be shown to happen for values of $\ell$ in intervals rather than in singleton as in the local case (see Corollary~\ref{cor}). The lack of propagation is often referred to as {\em pinning} in the literature.   Such propagation failure was first studied by Bates et al. \cite{BFRW} by proving the existence of discontinuous stationary fronts connecting $u=1$ to $u=0$ for bistable nonlocal reaction-diffusion equations under  Hypothesis~\ref{hypf} and Hypothesis~\ref{hypK}. Such discontinuous stationary interfaces turn out to be stable for the dynamics against bounded perturbations \cite{BFRW}. This is in sharp contrast with the local case where stationary interfaces, which are necessarily smooth, are always unstable due to the presence of an unstable eigenvalue of the linearized operator. The transition between pinning and unpinning in this nonlocal context was investigated more recently by Anderson et al. \cite{AFSS} who proved general universal asymptotic expansions of the wave speed of the interfaces. Ground state solutions, which are stationary even positive solutions asymptotic to $u=0$ at infinity, will be central in proving some of our results and we dedicate efforts at deriving quantitative properties of such solutions (see Section~\ref{secGS}).

\paragraph{Disclaimer.} When finalizing the current paper, we have been aware of two preprints \cite{ADK22,ZL22}  establishing very close results to ours on the long time dynamics of solutions of the Cauchy problem~\eqref{edp}. In~\cite{ZL22}, the authors prove that there is a sharp threshold between extinction and propagation for compactly supported kernels, and under an assumption on the nonlinearity which translates in our setting to the large diffusion regime of Section~\ref{seclarge} below. On the other hand, the purpose of \cite{ADK22} is to derive quantitative estimates on the thresholds of propagation and extinction in the regime where the initial condition is slightly above the unstable steady state of the nonlinearity and relate these thresholds to the properties of the interaction kernels. Actually, \cite{ADK22} also derives propagation results more general than our Proposition~\ref{proppropa} (and that of \cite{ZL22}) since they do not require exponential localization of the kernel, but only some integrability condition of the traveling front at its end states. In order to have a self-content presentation, we decided to keep our results in the regime of large diffusion with somehow simpler proofs since we need the estimates on the parameters derived there. The main novelty and originality of our work is also to investigate the regime of small diffusion where stagnation can occur, which was left aside in \cite{ADK22,ZL22}.

\paragraph{Outline of the paper.} In Section~\ref{seclarge}, we prove that extinction always occurs if the diffusion coefficient is large enough, and that propagation occurs whenever there exists a smooth traveling front solution with non zero wave speed. In Section~\ref{secGS}, we study ground state solutions, which are bounded stationary solutions, possibly discontinuous, for a specific interaction kernel. Next in Section~\ref{secsmall}, we prove that in the regime of small diffusion, extinction cannot happen. We also demonstrate that propagation cannot happen in some sub-region of the parameters within the so-called pinning region. Finally, in Section~\ref{secnum}, we present a numerical exploration of the problem.

\section{Regime of large diffusion}\label{seclarge}

\subsection{Extinction for large diffusion}\label{secext}

In this subsection, we prove that extinction occurs if the diffusion coefficient is large enough. This is summarized in the following result.

\begin{prop}[Extinction]\label{propext} Assume that $f$ and $\K$ satisfy Hypothesis~\ref{hypf} and Hypothesis~\ref{hypK}  respectively. Let $\kappa:=\underset{u\in(a,1)}{\sup}\frac{f(u)}{u}>0$. If $d>\kappa$, then there exists $\ell_0>0$ such that for all $\ell \in(0,\ell_0)$ the solution $u_\ell$ of \eqref{edp} satisfies
\bqs
u_\ell(t,x)\underset{t\to +\infty}{\longrightarrow} 0 \text{ uniformly in } x\in\R.
\eqs
\end{prop}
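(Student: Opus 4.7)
The plan is to build a time-dependent supersolution that decays uniformly to zero by combining a linear comparison with an explicit semigroup expansion.

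First, I would observe that Hypothesis~\ref{hypf} forces $f(u)\leq \kappa u$ for all $u\in[0,1]$: on $(0,a)$ this is immediate since $f\leq 0\leq \kappa u$, while on $[a,1]$ it is the very definition of $\kappa$. Let $\bar u$ solve the linear Cauchy problem $\partial_t \bar u = d(-\bar u + \K*\bar u) + \kappa \bar u$ with initial datum $\mathds{1}_{[-\ell,\ell]}$. The difference $w = \bar u - u$ then satisfies $\partial_t w \geq d(-w + \K*w) + \kappa w$, using $0\leq u\leq 1$ and $\kappa u - f(u)\geq 0$, together with $w(0)\geq 0$, so the comparison principle for this linear equation yields $u\leq \bar u$.

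Next, I would represent $\bar u$ through the semigroup expansion. Setting $\bar u = e^{\kappa t} v$ reduces to the pure nonlocal heat equation $\partial_t v = d(-v + \K*v)$, whose semigroup decomposes as $e^{-dt}\sum_{n\geq 0}\frac{(dt)^n}{n!}\K^{*n}$ with the convention $\K^{*0}=\delta_0$. This gives
\bqs
\bar u(t,x) = e^{-(d-\kappa)t}\mathds{1}_{[-\ell,\ell]}(x) + e^{-(d-\kappa)t}\sum_{n\geq 1}\frac{(dt)^n}{n!}\bigl(\K^{*n}*\mathds{1}_{[-\ell,\ell]}\bigr)(x).
\eqs
Hypothesis~\ref{hypK}(i) ensures $\K\in L^\infty(\R)$ with $\|\K\|_\infty\leq \|\K'\|_{L^1}$, since $\K(x) = -\int_x^{+\infty}\K'(y)\md y$; hence $\|\K^{*n}\|_\infty\leq \|\K\|_\infty$ for every $n\geq 1$, and consequently $\|\K^{*n}*\mathds{1}_{[-\ell,\ell]}\|_\infty\leq 2\ell\|\K\|_\infty$. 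Summing the exponential series leaves the uniform bound
\bqs
\|\bar u(t,\cdot)\|_\infty \leq e^{-(d-\kappa)t} + 2\ell\|\K\|_\infty\, e^{\kappa t}.
\eqs

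The assumption $d>\kappa$ enters precisely here: pick $t_0$ so large that $e^{-(d-\kappa)t_0} < a/4$, and then set $\ell_0 := a\big/\bigl(8\|\K\|_\infty e^{\kappa t_0}\bigr)$. For any $\ell\in(0,\ell_0)$, one has $u(t_0,x)\leq \bar u(t_0,x) < a/2$ uniformly in $x\in\R$. To conclude, the constant-in-space function $v(t)$ solving the ODE $\dot v = f(v)$ with $v(t_0)=a/2$ is an exact solution of \eqref{edp} (since $\K*v = v$), and because $f<0$ on $(0,a)$ it decreases monotonically to $0$; the comparison principle then gives $u(t,x)\leq v(t)\to 0$ uniformly in $x$, which is extinction.

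The main subtlety I would need to overcome is that no constant-in-$x$ supersolution works: the Fourier symbol $\kappa - d + d\hat{\K}(\xi)$ of the linearization equals $\kappa>0$ at $\xi=0$, ruling out any global exponential decay of $\bar u$. The decomposition above works because the singular datum-inherited piece $e^{-dt}\mathds{1}_{[-\ell,\ell]}$ becomes $e^{-(d-\kappa)t}\mathds{1}_{[-\ell,\ell]}$ after multiplication by $e^{\kappa t}$ and is genuinely dissipative exactly when $d>\kappa$, while the smooth convolution remainder, though it grows like $e^{\kappa t}$, is kept below $a/4$ at the fixed time $t_0$ by taking $\ell$ small relative to the explicit, $d$-dependent constant $\|\K\|_\infty e^{\kappa t_0}$.
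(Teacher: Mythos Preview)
Your proof is correct and follows essentially the same strategy as the paper: compare $u_\ell$ with the solution $\bar u$ of the linear problem $\partial_t\bar u=d(-\bar u+\K*\bar u)+\kappa\bar u$, show $\|\bar u(t,\cdot)\|_\infty\le e^{-(d-\kappa)t}+C\ell\,e^{\kappa t}$, pick $t_0$ so the first term is below $a/2$ and then $\ell_0$ so the second is as well, and finish by comparison with the spatially constant ODE solution. The only difference is the derivation of that $L^\infty$ bound: the paper writes $\bar u$ via Duhamel as $e^{(\kappa-d)t}\mathds{1}_{[-\ell,\ell]}+\int_0^t e^{(\kappa-d)(t-s)}d\,\K*\bar u(s)\,\md s$ and invokes the exact $L^1$ conservation $\|\bar u(t)\|_{L^1}=2\ell e^{\kappa t}$ to estimate the integral, whereas you expand the semigroup $e^{-dt}\sum_{n\ge0}\frac{(dt)^n}{n!}\K^{*n}$ and bound each $\K^{*n}$ in $L^\infty$ by $\|\K\|_\infty$. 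Both routes produce the same estimate (up to an immaterial constant) and the rest of the argument is identical.
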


\begin{proof}
We consider the following linear problem
\bqq
\label{lin}
\left\{
\begin{split}
\partial_t v &= d\left(-v+\K *v\right)+\kappa v, \quad t>0 \quad x\in\R, \\
v(t=0,x) & = \mathds{1}_{[-\ell,\ell]}(x), \quad x\in\R,
\end{split}
\right.
\eqq
whose solution $v\in\mathscr{C}^1\left((0,+\infty),L^1(\R)\cap L^\infty(\R) \right)\cap \, \mathscr{C}^0\left([0,+\infty),L^1(\R)\cap L^\infty(\R) \right)$ satisfies
\bqs
0\leq v(t,x) = e^{(\kappa-d)t}\mathds{1}_{[-\ell,\ell]}(x) +\int_0^t e^{(\kappa-d)(t-s)}\K*v(s,x)\md s, \quad t>0 \quad x\in\R.
\eqs
By positivity of $v$ and the fact that $\int_\R \K(x)\md x =1$, we have that
\bqs
\| v(t) \| _{L^1(\R)}= \| v(t=0) \| _{L^1(\R)} e^{\kappa t}= 2 \ell e^{\kappa t}, \quad t>0,
\eqs
by integration of \eqref{lin}. As a consequence, using the boundedness of $\K$, we get that
\bqq
v(t,x) \leq e^{(\kappa-d)t}  + \frac{2\ell \|\K \|_{L^\infty(\R)}}{d}e^{\kappa t},  \quad t>0 \quad x\in\R.
\label{intlin}
\eqq
We define
\bqs
t_0 := \frac{1}{\kappa-d} \ln \frac{a}{2}>0, \text{ and } \ell_0:= \frac{1}{2}\frac{d}{\|\K \|_{L^\infty(\R)}}e^{-d t_0}.
\eqs
Evaluating \eqref{intlin} at $t=t_0$, we obtain that
\bqs
0 \leq v(t_0,x) \leq a, \quad x\in \R.
\eqs
The final step of the proof consists in checking that the solution 
\bqq
\label{nonlinconstant}
\left\{
\begin{split}
\partial_t w &= d\left(-w+\K *w\right)+f(w), \quad t>0 \quad x\in\R, \\
w(t=0,x) & = \gamma , \quad x\in\R,
\end{split}
\right.
\eqq
with $0<\gamma<a$ satisfies 
\bqs
w(t,x)\underset{t\to +\infty}{\longrightarrow} 0 \text{ uniformly in } x\in\R.
\eqs
This easily follows from the fact that the solution $w$ of the above Cauchy problem \eqref{nonlinconstant} is constant in space and that the solution of the ODE problem $w'=f(w)$ with $w(0)=\gamma\in(0,a)$ converges asymptotically to zero because of the bistability property of $f$.

To conclude the proof, one let $\ell \in(0,\ell_0)$ and uses the solution $v(t,x)$ of \eqref{lin} as a super-solution to obtain that 
\bqs
0 \leq u_\ell(t_0,x)\leq  v(t_0,x) \leq \frac{a}{2}+ \frac{a}{2} \frac{\ell}{\ell_0}<a,  \quad x\in \R.
\eqs
The uniform convergence to zero then follows by comparing the solution for $t\geq t_0$ to the problem \eqref{nonlinconstant}.
\end{proof}

As a consequence of the above Proposition~\ref{propext}, when $d>\kappa$ we can define 
\bqs
0< \ell_0^*:= \sup\left\{ \ell>0 ~|~  \underset{t\to+\infty}{\lim} u_\ell(t,\cdot)=0 \text{ uniformly in } \R \right\}.
\eqs
When the nonlinearity is given by the cubic $f_a(u)=u(1-u)(u-a)$ we have the explicit formula $\kappa=d_{\mathrm{ext}}(a):=\frac{(1-a)^2}{4}$ with $0<a<1/2$. In this setting, our numerics suggest that the condition $d>d_{\mathrm{ext}}(a)$ is sharp within the class of exponentially localized kernels. For the specific choice $\K(x)=e^{-|x|}/2$, we prove in the forthcoming section that when $d$ is small enough, extinction no longer occurs. 

\subsection{Propagation outside the pinning region}\label{secprop}

In this subsection, we prove that when the nonlocal equation \eqref{edp} has traveling front solution with non zero wave speed then propagation always occurs if the mass of the initial condition is large enough. Our method of proof relies on the one developed by Fife \& McLeod \cite{FMcL} in the case of local diffusion. In order to be able to apply their arguments, we need to ensure that traveling front solutions with non zero wave speed exponentially converge towards their end states. This is the technical reason for introducing the following assumption.

 \begin{hyp}\label{hypexploc}
 We suppose that the convolutional kernel $\K$ satisfies Hypothesis~\ref{hypK} and that there exists $\lambda_0>0$ such that 
 \bqs
 \int_\R e^{\lambda_0 |x|} \K(x)\md x <+\infty.
 \eqs
 \end{hyp}
 
By a traveling front solution, we refer to the couple $(U,c)$ with $c\in\R$ and  profile $U$ solution of
\bqq
\label{TF}
\left\{
\begin{split}
-cU' &= d\left(-U+\K *U\right)+f(U), \text{ on } \R, \\
U(-\infty) & = 1, \quad U(+\infty)=0,
\end{split}
\right.
\eqq
such that $u(t,x)=U(x-ct)$ is an entire solution to \eqref{edp}. Existence, uniqueness and qualitative properties of traveling front solution $(U,c)$ to \eqref{TF} with bistable nonlinearities has been investigated in various studies \cite{BFRW,CovD07,Chen,AFSS,BR17}, and we especially refer to \cite{BFRW} for the first results on the existence, uniqueness, stability and regularity of such traveling front solutions. In the proof of our main result, the regularity property of traveling front solutions will play an important role. As noticed in \cite{BFRW}, unlike the local case, discontinuous traveling front solutions may exist with zero wave speed $c=0$ even though $\int_0^1f(u)\mathrm{d} u>0$. Following \cite{BFRW}, a sufficient condition to ensure that the wave speed is non zero, and thus that the corresponding profile $U$ is smooth, is to further assume that the map $u\mapsto u - \frac{f(u)}{d}$ is strictly monotone on $[0,1]$. We will see that this map and its monotonicity properties will also play an important role in the forthcoming section when studying stationary ground state solutions (see Section~\ref{secGS}). Note that when the wave speed is non zero, then its sign is given by the sign of $\int_0^1f(u)\mathrm{d} u$, such that in our case, when non zero, the wave speed will always by positive. Using the result of \cite{CovD07}, under Hypothesis~\ref{hypexploc}, if there exists some traveling front $(U_*,c_*)$ solution of \eqref{TF} with $c_*>0$, then $U_*$ converges asymptotically towards its end states at $\pm\infty$ at an exponential rate. 
 
 \begin{prop}[Propagation]\label{proppropa}
 Assume that $f$ and $\K$ satisfy Hypothesis~\ref{hypf} and Hypothesis~\ref{hypexploc}  respectively. Assume that there exists $(U_*,c_*)$ solution of \eqref{TF} with $c_*>0$. Then, there exists $\ell_1>0$ such that for all $\ell \geq \ell_1$, the solution $u_\ell$ of \eqref{edp} satisfies
 \bqs
 u_\ell(t,x) \underset{t\to+\infty}{\longrightarrow}1,
 \eqs
 locally uniformly on $\R$.
 \end{prop}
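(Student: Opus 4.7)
The plan is to adapt the classical sub-solution argument of Fife \& McLeod \cite{FMcL} to the nonlocal setting \eqref{edp}. The two structural properties that make this adaptation possible are the smoothness of $U_*$, which follows from $c_*>0$, and the exponential convergence of $U_*$ towards its end states at $\pm\infty$, granted by Hypothesis~\ref{hypexploc} together with the result of \cite{CovD07} cited just before the statement. These two ingredients replace the local differential tools used in \cite{FMcL}.

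Concretely, I would try a sub-solution of the form
\bqs
\underline{u}(t,x) := U_*\bigl(x - c_* t - \xi(t)\bigr) + U_*\bigl(-x - c_* t - \xi(t)\bigr) - 1 - q(t),
\eqs
consisting of two counter-propagating copies of the front corrected by a small positive scalar $q(t)$ whose role is to absorb the nonlinear interaction between the two profiles. Setting $\eta_\pm := \pm x - c_* t - \xi(t)$ and using that $\K*1=1$ together with the evenness of $\K$ and the traveling-wave equation \eqref{TF} applied to each translate of $U_*$, the linear contributions (including the convolutions) cancel and a direct computation yields
\bqs
\cL[\underline{u}] = -\xi'(t)\bigl(U_*'(\eta_+) + U_*'(\eta_-)\bigr) - q'(t) + f(U_*(\eta_+)) + f(U_*(\eta_-)) - f\bigl(U_*(\eta_+) + U_*(\eta_-) - 1 - q(t)\bigr),
\eqs
where $\cL[w] := \partial_t w - d(-w+\K*w) - f(w)$. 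I would then prescribe $q(t) = q_0 e^{-\mu t}$ and $\xi'(t) = -A\,q(t)$, so that $\xi$ remains bounded in time, and seek positive constants $q_0,\mu,A$ for which $\cL[\underline{u}]\leq 0$ pointwise.

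The verification splits $\R$ into a middle region where both $U_*(\eta_\pm)$ are within $\delta$ of $1$, two outer regions where one of them is within $\delta$ of $0$, and two interface regions of bounded width near each front. In the middle and outer regions, Taylor expansion of $f$ around $1$ and $0$ combined with $f'(0),f'(1)<0$ and the exponential decay of $U_*$ to its end states shows that the right-hand side above is bounded by $(\mu - \min(-f'(0),-f'(1)))\,q(t) + \rmO(e^{-\lambda c_* t})$ for some $\lambda>0$, which can be made non-positive by a suitable choice of $\mu$ and $q_0$. In the interface regions, $U_*'$ is bounded away from zero on a compact set and the term $-\xi'(t)(U_*'(\eta_+) + U_*'(\eta_-))$ is strictly negative of magnitude at least $c_1 A\,q(t)$ for some $c_1>0$, so taking $A$ large enough absorbs the $\rmO(q(t))$ bound on the remaining nonlinear residual. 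This interface estimate is the main technical obstacle, since unlike in the local case there is no second-derivative cancellation to lean on and one must track quantitatively the small interaction term uniformly in $x$. Once $\underline{u}$ is shown to be a sub-solution, one chooses $\xi(0)$ large enough so that $\underline{u}(0,x) \leq \mathds{1}_{[-\ell,\ell]}(x)$ for all $\ell\geq\ell_1$, for some $\ell_1>0$; the comparison principle for \eqref{edp} yields $u_\ell(t,x)\geq \underline{u}(t,x)$, and since $q(t)\to 0$ while $c_* t + \xi(t)\to +\infty$, the sub-solution tends to $1$ locally uniformly in $x$, which sandwiched with the trivial upper bound $u_\ell\leq 1$ gives the result.
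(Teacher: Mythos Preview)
Your proposal is correct and follows essentially the same Fife--McLeod sub-solution argument as the paper: the same ansatz of two counter-propagating fronts minus $1$ minus an exponentially decaying correction $q(t)$, the same region splitting near/away from the interfaces, and the same reliance on the exponential decay of $U_*$ to its end states granted by Hypothesis~\ref{hypexploc} and \cite{CovD07}. The only cosmetic difference is that the paper prescribes the shift $\zeta(t)$ via an ODE with two exponential forcing terms rather than your single choice $\xi'(t)=-Aq(t)$, but once $\mu$ is taken small enough compared to the front's decay rate the two choices are interchangeable.
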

 
 \begin{proof}
 The idea is to construct a subsolution for \eqref{edp} in the form of the superposition of two-counter propagating traveling front solutions, as done in \cite{FMcL} in the local case. For completeness, we reproduce the argument here.  We want to construct a subsolution $\underline{u}(t,x)$ which can be written
 \bqs
 \underline{u}(t,x)=U_+(t,x)+U_-(t,x)-1-q(t),
 \eqs
 with $U_\pm(t,x):=U_*(\pm x-c_*t-\zeta(t))$ where $q(t)>0$ and $\zeta(t)>0$ will be chosen appropriately. First, we define the functional 
 \bqq
  \mathscr{N}( w(t,x)):=\partial_t  w(t,x) -d\left( w(t,x) +\K* w(t,x)\right)-f( w(t,x)),
 \label{eqNcal}
 \eqq
 for a given function $w\ \in\mathscr{C}^1\left((0,+\infty),L^1(\R)\cap L^\infty(\R) \right)\cap \, \mathscr{C}^0\left([0,+\infty),L^1(\R)\cap L^\infty(\R) \right)$. Then, we compute
 \bqs
 \mathscr{N}( \underline{u}(t,x))=-\zeta'(t)\left(U_*'(\xi_-(t,x))+U_*'(\xi_+(t,x)) \right)-q'(t)+f(U_+(t,x))+f(U_-(t,x))-f( \underline{u}(t,x)),
 \eqs
 where we denoted $\xi_\pm(t,x):=\pm x-c_*t-\zeta(t)$ and used the fact that $U_*$ is a traveling front solution. Our aim is to prove that $\mathscr{N}( \underline{u}(t,x))\leq 0$ for all $t>0$ and $x\in\R$. We are only going to prove that $\mathscr{N}( \underline{u}(t,x))\leq 0$ for all $t>0$ and $x\geq0$, the estimate for $x\leq 0$ being handled similarly.

 Let $q_0>0$ be small enough such that $a<1-q_0<1$. There exist $\mu>0$ and $\delta>0$ such that
 \bqs
 f(u)-f(u-p)\geq \mu p,
 \eqs
 for $1-\delta \leq u \leq 1$ and $0\leq p \leq q_0$. Here, we have used the fact that $f'(1)<0$. In the region where $1-\delta \leq U_+(t,x) \leq 1$ and $0\leq 1-U_-(t,x)+q(t) \leq q_0$ we have that
 \bqs
 f(U_+(t,x))-f(U_+(t,x)-(1-U_-(t,x)+q(t))) \leq -\mu (1-U_-(t,x)+q(t)).
 \eqs
 Let us remark that the inequality $0\leq 1-U_-(t,x)+q(t) \leq q_0$ will hold if $0\leq q(t)\leq \frac{q_0}{2}$ and $\zeta(t)$ large enough. Indeed, if $\zeta(t)$ is large enough then $1-U_-(t,x) \leq \frac{q_0}{2}$ and $0\leq 1-U_-(t,x)+q(t) \leq q_0/2+q_0/2=q_0$. Finally, let us note that $U'_*(\xi_\pm(t,x))<0$ and that one can find a constant $b>0$ such that $f(U_-(t,x))\leq b(1-U_-(t,x))$. Summarizing, we see that for $t>0$, $x\geq 0$, $1-\delta \leq U_+(t,x) \leq 1$, $0\leq q(t)\leq \frac{q_0}{2}$ and $\zeta(t)$ large enough one has
 \bqs
  \mathscr{N}( \underline{u}(t,x))\leq (b-\mu) (1-U_-(t,x))-q'(t)-\mu q(t),
 \eqs
 provided that $\zeta'(t)<0$. We now use the fact $U_*$ asymptotically converges towards its rest states at an exponential rate to get the existence of $\nu>0$ and $\alpha>0$ such that
 \bqs
 1-U_-(t,x) \leq \alpha e^{\nu(-x-c_*t-\zeta(t))},
 \eqs
 for $\zeta(t)$ large enough. Then, we let $0<\mu_0<\min(\mu,\nu c_*)$ and $q(t) = \frac{q_0}{2}e^{-\mu_0 t}$. With that specific choice for $q(t)$, we get that
 that for $t>0$, $x\geq 0$, $1-\delta \leq U_+(t,x) \leq 1$ and $\zeta(t)$ large enough with $\zeta'(t)<0$ one has
 \bqs
  \mathscr{N}( \underline{u}(t,x))\leq 0.
 \eqs
 A similar argument also holds in the range $0\leq U_+(t,x)\leq \delta$ and $0\leq q(t)\leq \frac{q_0}{2}$ using this time the fact that $f'(0)<0$. In the range where $\delta \leq U_+(t,x) \leq 1-\delta$, we use the facts that $U_*'(\xi_-(t,x))+U_*'(\xi_+(t,x))\leq -\beta <0$ for some $\beta>0$ and
 \bqs
 f(U_+(t,x))-f(U_+(t,x)-(1-U_-(t,x)+q(t))) \leq C (1-U_-(t,x)+q(t)),
 \eqs
 for some constant $C>0$. We obtain with $q(t)=\frac{q_0}{2}e^{-\mu_0 t}$
  \begin{align*}
  \mathscr{N}( \underline{u}(t,x))&\leq \beta \zeta'(t) -q'(t)+b(1-U_-(t,x))+C (1-U_-(t,x)+q(t)),\\
  &\leq \beta \zeta'(t) + C_1 e^{-\mu_0 t} + C_2 e^{-\nu c_*t},
 \end{align*}
 for two positives constants $C_{1,2}>0$. We set $\zeta(t)$ solution of 
 \bqs
 \beta \zeta'(t) + C_1 e^{-\mu_0 t} + C_2 e^{-\nu c_*t}=0,
 \eqs
 starting from $\zeta(0)=\zeta_0>0$. We can select $\zeta_0>0$ large enough such that the inequalities
\bqs
 1-U_-(t,x) \leq \alpha e^{\nu(-x-c_*t-\zeta(t))} \leq \frac{q_0}{2}
 \eqs
 hold for all $t>0$.
 
 The last part of the proof now consists in being able to compare the solution $u_\ell$ with the subsolution $\underline{u}(t,x)$. More precisely, we will have that $ \underline{u}(t,x)$ will be a subsolution if we can ensure that $\underline{u}(t=0,x)\leq \mathds{1}_{[-\ell,\ell]}(x)$ for all $x\in\R$. At time $t=0$, we have
 \bqs
 \underline{u}(t=0,x)=U_*(x-\zeta_0)+U_*(-x-\zeta_0)-1-\frac{q_0}{2} \leq 1-\frac{q_0}{2}\leq 1, \quad \forall x\in[-\ell,\ell].
 \eqs
 Furthermore, there exists $\ell_1>0$ such that for all $|x|\geq \ell_1$ there holds $U_*(x-\zeta_0)+U_*(-x-\zeta_0)-1-\frac{q_0}{2} \leq0$ since $U_*(+\infty)=0$. As a consequence, for all $\ell\geq \ell_1$ we have $\underline{u}(t=0,x)\leq \mathds{1}_{[-\ell,\ell]}(x)$ for all $x\in\R$. Then the comparison principle gives that
 \bqs
U_*(x-c_*t-\zeta(\infty))+U(-x-c_*t -\zeta(\infty))-1-q(t) \leq \underline{u}(t,x) \leq u_\ell(t,x), \quad t>0, \quad x\in\R,
 \eqs
 and the local uniform convergence follows.
 \end{proof}
  
 As a consequence of the above Proposition~\ref{proppropa}, we can define 
\bqs
0< \ell_1^*:= \inf\left\{ \ell>0 ~|~  \underset{t\to+\infty}{\lim} u_\ell(t,\cdot)=1 \text{ locally uniformly in } \R \right\},
\eqs
whenever the assumptions of that proposition are satisfied.  Using the analysis in \cite{BFRW}, it is possible in the case of the cubic nonlinearity $f_a(u)=u(1-u)(u-a)$ to derive an explicit condition on $(d,a)$ for the existence of traveling front solutions with non zero wave speed. It is given in \cite{AFSS} and reads
\bqs
d>d_{\mathrm{pin}}(a):=\frac{1}{3}\left( 1-a+a^2-\sqrt{1-2a}\right), \quad 0<a<1/2.
\eqs
Combining this bound with the one obtained in the case of extinction in the previous section, we have proved the existence of a threshold phenomenon for \eqref{edp} whenever $d>\max\left\{d_{\mathrm{pin}}(a),d_{\mathrm{ext}}(a)\right\} $ in the case of the cubic nonlinearity $f_a(u)=u(1-u)(u-a)$ with $0<a<1/2$ that is
\bqs
0<\ell_0^*\leq \ell_1^*<+\infty.
\eqs
Our numerics suggest that this threshold is {\em sharp} when $d\geq \frac{1}{4}$ that is 
\bqs
0<\ell_0^*= \ell_1^*<+\infty, \quad \text{ if } d\geq \frac{1}{4} \text{ and } 0<a<1/2,
\eqs
whereas it is not sharp when $d< \frac{1}{4}$, that is
\bqs
0<\ell_0^*< \ell_1^*<+\infty, \quad \text{ if } \max\left\{d_{\mathrm{pin}}(a),d_{\mathrm{ext}}(a)\right\}<d< \frac{1}{4} \text{ and } 0<a<1/2.
\eqs
We refer to Section~\ref{secnum} for further details.

\section{Ground states -- A case study}\label{secGS}

Ground states for \eqref{edp} are bounded stationary profiles $U$ in $L^2(\R)$ solutions of 
\bqq
\label{GS}
\left\{
\begin{split}
0 &= d\left(-U+\K *U\right)+f(U), \text{ on } \R, \\
U(\pm\infty) & = 0, \quad U>0 \text{ on } \R.
\end{split}
\right.
\eqq
Note that the existence of solutions of \eqref{GS} for general nonlinearity $f$ and kernel $\K$ satisfying  Hypothesis~\ref{hypf} and Hypothesis~\ref{hypK} was proved by Chmaj \& Ren \cite{CR99} using variational techniques by studying the following energy functional defined on $L^2(\R)$
\bqs
\mathscr{E}(u):=\frac{d}{4}\int_{\R^2}\K(x-y)(u(x)-u(y))^2\md x \md y + \int_\R F(u(x))\md x,
\eqs
with $F(u):=-\int_0^u f(v)\md v$. It is also not difficult to check that \eqref{edp} is the $L^2$ gradient flow of the above functional. Depending on $d$ and the nonlinearity $f$, it was already noticed in \cite{CR99} that solutions of \eqref{GS} can be discontinuous, see also \cite{AFSS,BFRW}. This is in total contrast with the local diffusion case where it is well-known \cite{BL83} that ground states are smooth whenever the nonlinearity $f$ is. In what follows, we will go beyond the results of existence of Chmaj \& Ren \cite{CR99} and completely characterize all possible ground states of \eqref{GS} in the case where the kernel $\K$ is fixed to $\K(x)=e^{-|x|}/2$ and for any bistable nonlinearity satisfying our running Hypothesis~\ref{hypf} and such that the function $g$ defined below is monotone on a maximum of 3 intervals.  This specific choice for $\K$ will enable us to recast our nonlocal problem to local one such that phase plane analysis techniques can be used. We apply our results to the cubic $f_a(u)=u(1-u)(u-a)$ with $0<a<1/2$ for which we can  derive closed form formulas.  We finally note that due to the symmetry of $\K$ and the translation invariance of \eqref{GS}, we only look for ground states that are even (possibly discontinuous) functions on $\R$.


\subsection{Reformulation of the problem when $\K(x)=e^{-|x|}/2$}

As explained above, we can reformulate \eqref{GS} as a higher order dynamical system when $\K(x)=e^{-|x|}/2$.  We set $V=\K*U$ and remark that our specific choice of $\K$ implies that the equality is equivalent to $V-V''=U$. This can be easily seen by noticing that the Fourier symbol of $\K$ is $\widehat{\K}(\xi)=(1+\xi^2)^{-1}$ for $\xi\in\R$. As a consequence, the nonlocal equation on $U$ can be written as
\bqs
\left\{
\begin{split}
0 &= d\left(-U+V \right)+f(U), \\
V-V''&=U,
\end{split}
\right.
\quad \text{ on }\R.
\eqs

We denote $g(u):=u-\frac{f(u)}{d}$ defined on $\R$ which has the same regularity as $f$ and satisfies $g(0)=0$ and $g(1)=1$ with $g'(0)>0$ and $g'(1)>0$. When $g'>0$ on $[0,1]$, then the first equation of the above system gives that $U=g^{-1}(V)$ such that we are let to study the second order dynamical system
\bqs
V-V''=g^{-1}(V), \text{ on } \R,
\eqs
subject to $V(\pm\infty)=0$ and $V>0$. In that case, phase plane analysis \cite{BL83,CR99} gives the existence of a unique, smooth, even, ground state solution $V$ which is monotone on $\R^+$.  Indeed, the above system is Hamiltonian with conserved Hamiltonian
\bqs
\mathcal{H}(V,V')=\frac{1}{2}V'^2+\mathcal{G}(V), \text{ with } \mathcal{G}(V):=-\frac{1}{2}V^2+\int_0^V g^{-1}(w) \md w.
\eqs
The ground state $V$ is then found as the unique homoclinic solution parametrized by
\bqs
\mathcal{H}(V,V')=0.
\eqs
Let us remark that the condition $g'>0$ on $[0,1]$ is always satisfied when $d$ is large enough. This can be summarized in the following result.

\begin{lem}\label{lem41}
Assume that $d>\| f' \|_{\infty}$,  then there exists a unique, smooth, even, ground state solution $V$ of \eqref{GS} which further satisfies $V'(x)<$ for all $x>0$.
\end{lem}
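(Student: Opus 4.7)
The plan is to exploit the hypothesis $d>\|f'\|_\infty$ to make the reduction to the planar Hamiltonian ODE $V''=V-g^{-1}(V)$ fully rigorous, and then to read off the ground state from a phase portrait analysis. First I would observe that the condition $d>\|f'\|_\infty$ gives $g'(u)=1-f'(u)/d>0$ on $\R$, so $g:\R\to\R$ is a smooth diffeomorphism and $g^{-1}$ is smooth. Combined with the reformulation of \eqref{GS} via $V=\K*U$ already carried out in the excerpt, any bounded even ground state corresponds one-to-one with a smooth even solution $V$ of
\bqs
V''=V-g^{-1}(V),\qquad V(\pm\infty)=0,\qquad V>0,
\eqs
via $U=g^{-1}(V)$.

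Next I would exploit the Hamiltonian structure. With $\mathcal{G}(V)=-\tfrac12 V^2+\int_0^V g^{-1}(w)\md w$, one has $\mathcal{G}'(V)=g^{-1}(V)-V$, so the critical points of $\mathcal{G}$ are exactly the zeros of $f$, namely $V\in\{0,a,1\}$. Since $g'(0)>1$ (because $f'(0)<0$) and similarly $g'(1)>1$, the second derivative $\mathcal{G}''(V)=(g^{-1})'(V)-1$ is strictly negative at $V=0$ and $V=1$, so these are local maxima, and strictly positive at $V=a$, so $a$ is a local minimum. Hence $\mathcal{G}$ is strictly decreasing on $(0,a)$ and strictly increasing on $(a,1)$. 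A change of variable $w=g(u)$ yields the identity
\bqs
\mathcal{G}(g(u))=\frac{F(u)}{d}-\frac{f(u)^2}{2d^2},\qquad F(u)=\int_0^u f(v)\md v,
\eqs
so that $\mathcal{G}(0)=0$ and $\mathcal{G}(1)=F(1)/d>0$ thanks to the Hypothesis~\ref{hypf} assumption $\int_0^1 f>0$. By the strict monotonicity just established, there exists a unique $V_\ast\in(a,1)$ with $\mathcal{G}(V_\ast)=0$, and $\mathcal{G}<0$ on $(0,V_\ast)$.

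I would then identify the ground state with the homoclinic orbit in the level set $\{\mathcal{H}=0\}$. Since $0$ is a hyperbolic saddle of the planar system (the eigenvalues are $\pm\sqrt{1-(g^{-1})'(0)}\in\R^*$), its unstable manifold enters the region $\{V>0,\,V'>0\}$ and, thanks to $\mathcal{G}<0$ on $(0,V_\ast)$, the $\mathcal{H}=0$ level set is the smooth simple closed curve $V'=\pm\sqrt{-2\mathcal{G}(V)}$ for $V\in[0,V_\ast]$, joined to itself at $(0,0)$ and at $(V_\ast,0)$. This is precisely a homoclinic loop, which produces a smooth solution $V$ with $V(0)=V_\ast$, $V'(0)=0$, $V'(x)<0$ for $x>0$, and $V(x)\to 0$ exponentially as $x\to\pm\infty$ along the stable manifold; evenness is automatic by the reflection $x\mapsto -x$ and uniqueness of solutions to the ODE.

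For uniqueness, any even ground state satisfies $V'(0)=0$ and must lie on the level set containing $(0,0)$, namely $\mathcal{H}=0$; the only bounded, positive, non-trivial orbit on that level set is the homoclinic described above, and the evenness fixes the unique translate. Finally, $U=g^{-1}(V)$ inherits smoothness, evenness, positivity, decay to $0$ at infinity, and the strict monotonicity $U'(x)<0$ for $x>0$ from $V$. The step I expect to require the most care is proving that the $\mathcal{H}=0$ level set is indeed a homoclinic loop rather than an unbounded trajectory: this is where the identity giving $\mathcal{G}(1)=F(1)/d>0$, together with the strict monotonicity of $\mathcal{G}$ on $(a,1)$, is essential to produce the inner turning point $V_\ast<1$.
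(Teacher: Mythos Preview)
Your proposal is correct and follows essentially the same approach as the paper: reduce \eqref{GS} to the planar Hamiltonian system $V''=V-g^{-1}(V)$ via $V=\K*U$, then locate the ground state as the unique homoclinic orbit on the level set $\mathcal{H}(V,V')=0$. The paper's argument is in fact the short paragraph preceding the lemma, which states the Hamiltonian structure and cites \cite{BL83,CR99} for the phase plane analysis without further detail; your write-up simply fills in what the paper leaves to those references, including the explicit identity $\mathcal{G}(g(u))=F(u)/d-f(u)^2/(2d^2)$ that pins down $\mathcal{G}(1)=F(1)/d>0$ and hence the turning point $V_\ast\in(a,1)$.
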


When $g$ is no longer invertible on $[0,1]$, it is possible that it may have several intervals of strict monotonicity. Since $g'(0)>0$ and $g'(1)>0$, the function $g$ is at least strictly monotone on two intervals near $u=0$ and $u=1$. In what follows, we assume that $g$ is strictly monotone on a maximum of 3 intervals. A consequence of this assumption will be that discontinuous ground states of \eqref{GS} will have only two points of discontinuity at $\pm x_0$ for some $x_0$.

\begin{hyp}\label{hypg}
We assume that $g(u):=u-\frac{f(u)}{d}$, defined on $\R$, is monotone on a maximum of 3 intervals on $[0,1]$. In other words, there exist $0<\beta \leq \gamma<1$ such that
\bqs
g'(u)>0 \text{ for } u \in [0,\beta) \cup (\gamma,1], \quad  g'(u)<0 \text{ for } u \in (\beta,\gamma),
\eqs
with $g'(\beta)=g'(\gamma)=0$.
\end{hyp}

\begin{figure}[t!]
  \centering
  \includegraphics[width=.475\textwidth]{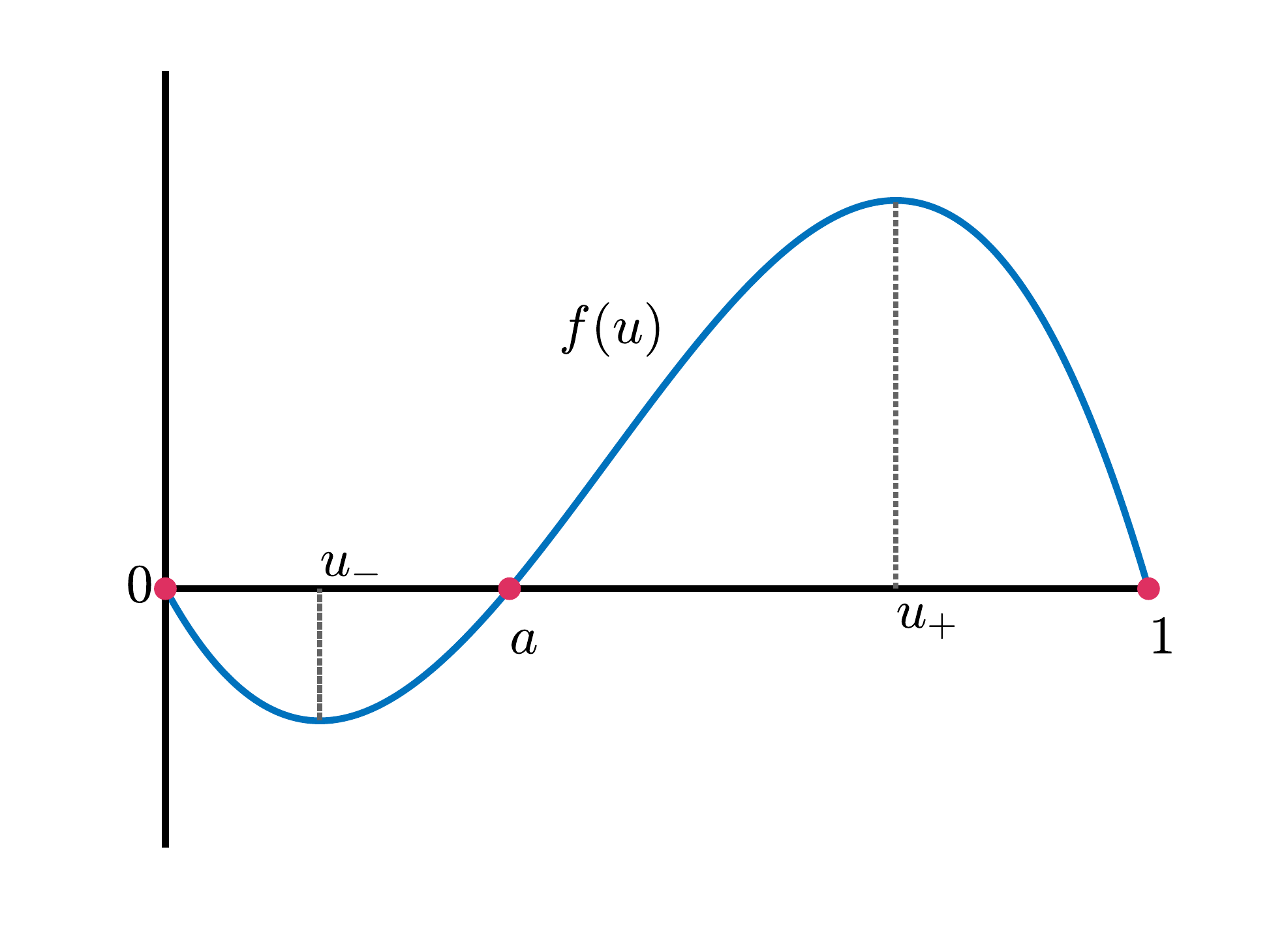}
  \includegraphics[width=.475\textwidth]{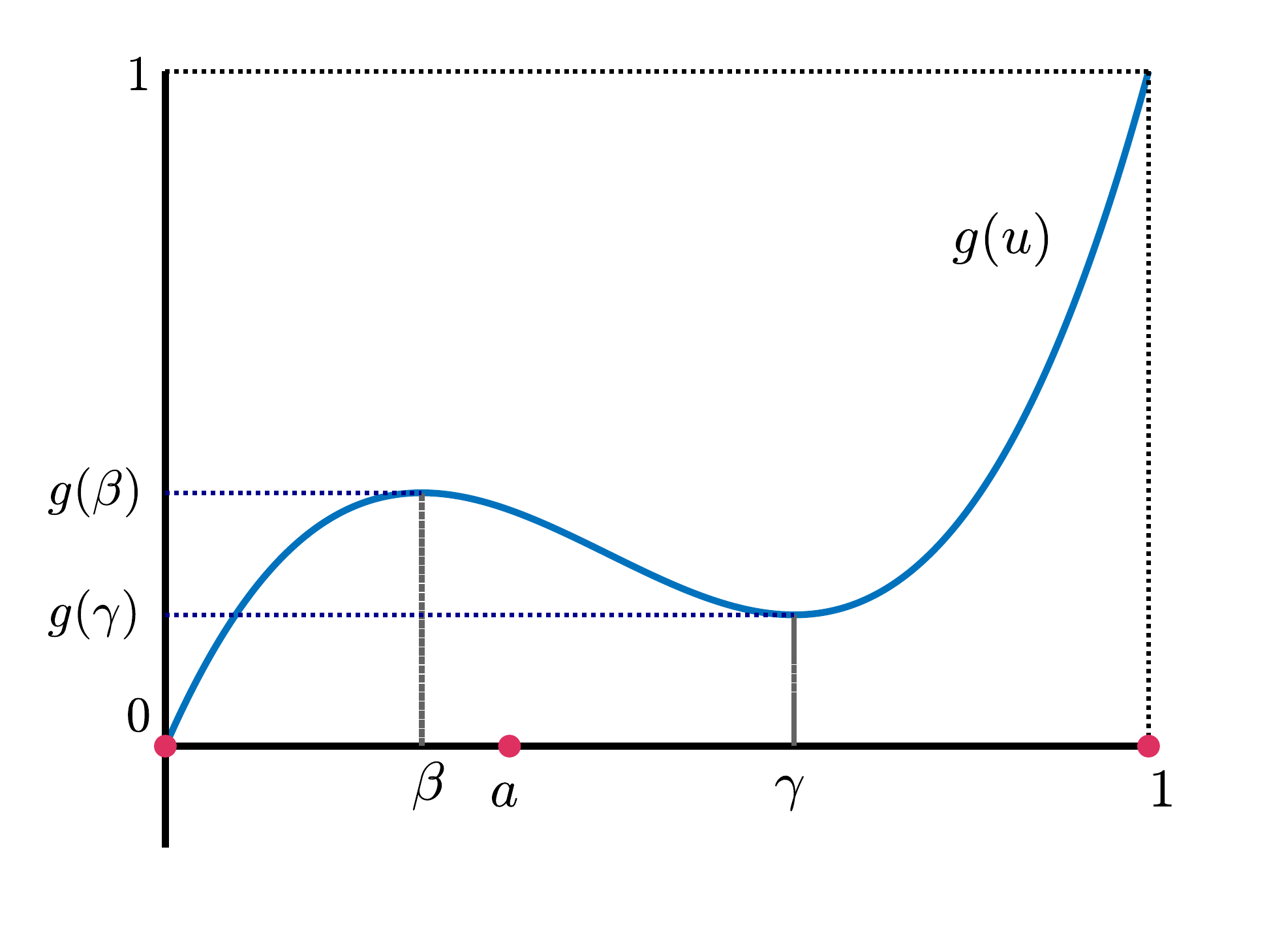}
  \caption{We illustrate a typical situation where both Hypothesis~\ref{hypf} and Hypothesis~\ref{hypg} are satisfied for $f$ and $g$ with $\beta<\gamma$.}
  \label{fig:figurefg}
\end{figure}

Under the above assumption, we denote by $g_{-}^{-1}$ and $g_{+}^{-1}$ the respective inverses of $g$ on $[0,\beta[$ and $]\gamma,1]$ such that we are eventually let to study two problems
 \bqs
V-V''=g_{-}^{-1}(V), \text{ and } V-V''=g_{+}^{-1}(V).
\eqs
To obtain a ground state, we will glue the trajectories of the solutions of each problem, and when $\beta<\gamma$, we typically expect to have discontinuous profiles. The case when $\beta=\gamma$ is degenerate and will be treated in a second time. So, from now on, we assume that $\beta<\gamma$.

The key point of our analysis is that both systems have a Hamiltonian which is conserved along the trajectory of the solution and is given by
\bqs
\mathcal{H}_\pm(V,V')=\frac{1}{2}V'^2+\mathcal{G}_\pm(V),
\eqs
where
\bqs
\mathcal{G}_-(V):=-\frac{1}{2}V^2+\int_0^V g_{-}^{-1}(w) \md w, \text{ and } \mathcal{G}_+(V):=-\frac{1}{2}V^2-\int_V^1 g_{+}^{-1}(w) \md w.
\eqs
Since ground states satisfy the asymptotic condition $V(\pm\infty)=0$, they will necessarily satisfy
\bqs
\mathcal{H}_-(V,V')=0,
\eqs
near infinity and this is the starting point of our analysis. We will distinguish different cases depending on the sign of $\mathcal{G}_-$ on its interval of definition. More precisely, since $g_{-}^{-1}$ is defined  on the interval $[0,g(\beta)[$, we have that $\mathcal{G}_-$ is also defined for all $V\in [0,g(\beta)[$, and we can proceed as follows.

\paragraph{Case $\beta<\gamma$ \& $\mathcal{G}_-(g(\beta))<0$.} If we suppose that $\mathcal{G}_-(g(\beta))<0$, then we necessarily have that
\bqs
\mathcal{G}_-(V) < 0, \text{ for all } V\in(0,g(\beta)],
\eqs
and the zero level set of $\mathcal{H}_-$ in the half-plane $v\geq0$, denoted $\mathcal{M}_0^-$,  can be parametrized as
\bqs
\mathcal{M}_0^-:=\left\{ (v,w)\in\R^2 ~|~ w = \pm \sqrt{-2\mathcal{G}_-(v)}, \quad 0 \leq v \leq g(\beta) \right\}.
\eqs
We emphasis that $\mathcal{M}_0^-$ is composed of two symmetric curves in the half-plane $v\geq0$ intersecting at the origin. By definition of $\beta$ and $\gamma$, we always have $g(\gamma)<g(\beta)$ when $\beta<\gamma$. Thus, for each element $(v_0,w_0)\in\mathcal{M}_0^-$ with $g(\gamma)\leq v_0 \leq g(\beta)$ and $v_0>0$, we can look for an intersection with a trajectory coming from the system defined near $u=1$ through $g_+^{-1}$, that is we solve for
\bqs
\mathcal{H}_+(V,V')=\mathcal{H}_+(v_0,w_0)=\mathcal{G}_+(v_0)-\mathcal{G}_-(v_0).
\eqs

In order to describe the geometry of the above level sets, we first need to understand if $\mathcal{M}_0^-$ intersects $\mathcal{W}_1^+$ which is defined as the level set of $\mathcal{H}_+$ associated to the equilibrium point $(1,0)$ restricted to the half-plane $v\leq 1$. That is we solve for
\bqs
\mathcal{H}_+(v,w)=\mathcal{H}_+(1,0)=\mathcal{G}_+(1)=-\frac{1}{2}, \quad v\in[g(\gamma),1],
\eqs
which gives the parametrization 
\bqs
\mathcal{W}_1^+:=\left\{ (v,w)\in\R^2 ~|~ w = \pm \sqrt{2\left(\mathcal{G}_+(1)-\mathcal{G}_+(v)\right)}, \quad g(\gamma) \leq v \leq 1 \right\}.
\eqs
As a consequence, we need to characterize when the most extremal points on $\mathcal{M}_0^-$, which are given by $(g(\beta),\pm\sqrt{-2\mathcal{G}_-(g(\beta))})$, can belong to $\mathcal{W}_1^+$. That is, we look for solutions to
\bqs
\mathcal{G}_+(g(\beta))-\mathcal{G}_-(g(\beta))=\mathcal{G}_+(1)=-\frac{1}{2}.
\eqs
Using the specific form of $\mathcal{G}_\pm$, we can write the above equality as
\bqs
\int_0^{g(\beta)}g_-^{-1}(w)\md w + \int_{g(\beta)}^1g_+^{-1}(w)\md w=\frac{1}{2},
\eqs
and upon denoting $\tilde{\beta}:= g_+^{-1}(g(\beta))\in(\gamma,1)$, we get
\bqs
\int_0^\beta u g_-'(u)\md u + \int_{\tilde{\beta}}^1 u g_+'(u)\md u = \frac{1}{2}.
\eqs
And an integration by part gives:
\bqq
\int_0^\beta g_-(u)\md u + \int_{\tilde{\beta}}^1 g_+(u)\md u +(\tilde{\beta}-\beta)g(\beta)= \frac{1}{2}.
\label{boundaryPR}
\eqq
The relation \eqref{boundaryPR} is precisely the definition of the boundary of the so-called pinning region \cite{AFSS} (see also \cite{BFRW}) which characterizes the existence of discontinuous monotone traveling front solutions. We thus need to distinguish two sub-cases.

\begin{figure}[t!]
  \centering
 \subfigure[Phase plane analysis.]{ \includegraphics[width=.5\textwidth]{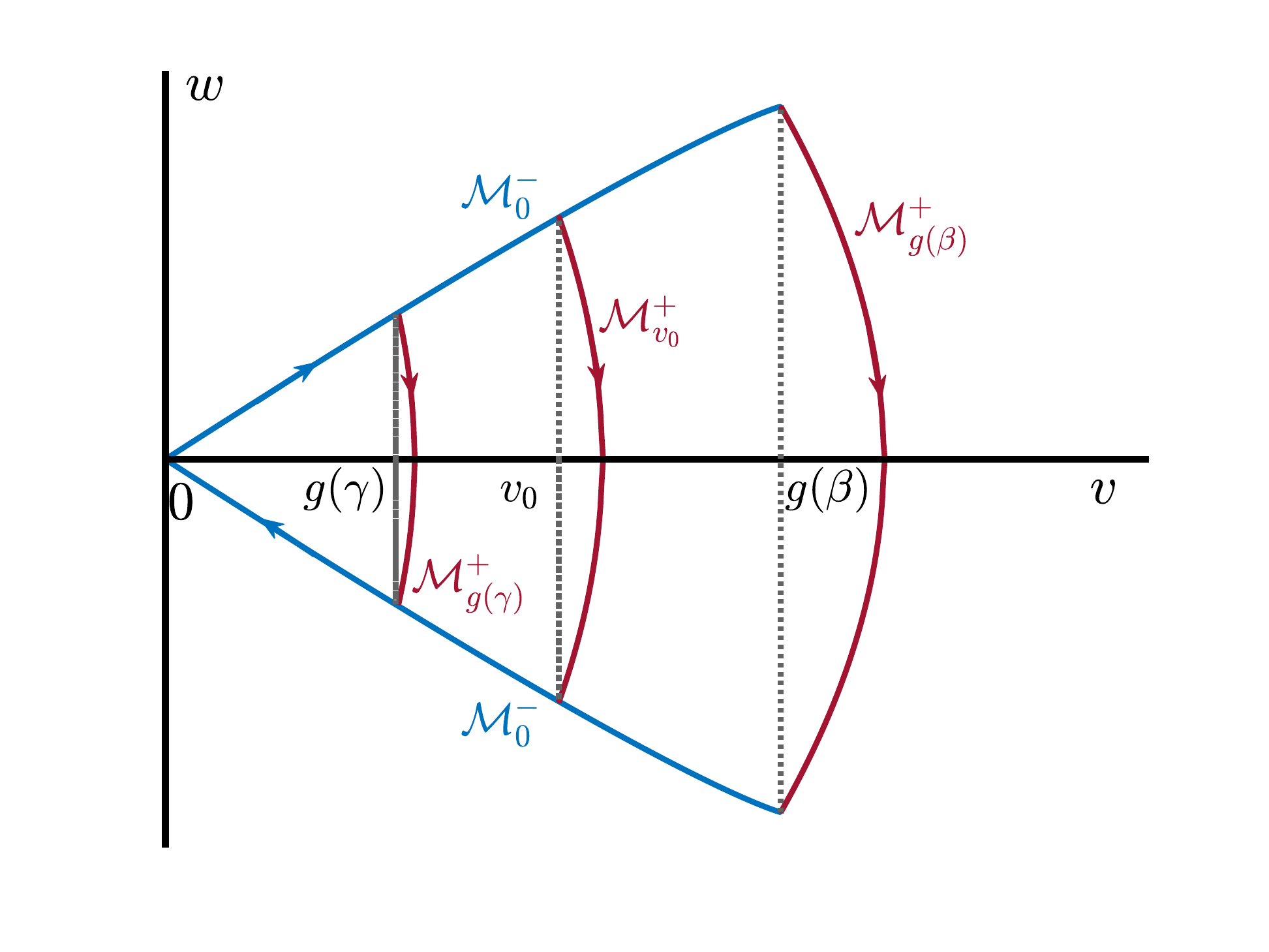}}
   \subfigure[Glued trajectory $V(x)$.]{   \includegraphics[width=.475\textwidth]{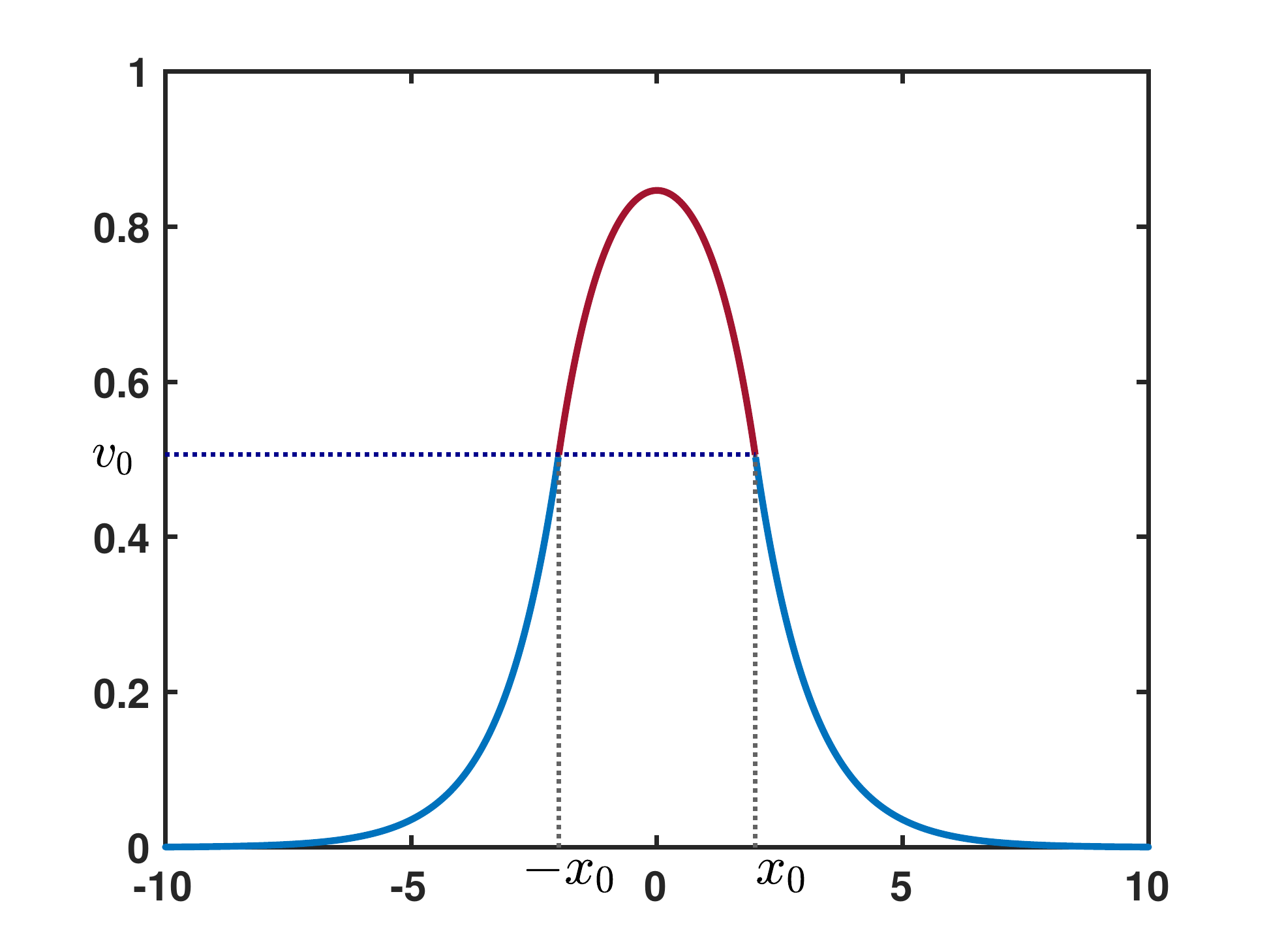}}
  \caption{Left: Illustration of the phase plane analysis and the gluing of trajectories in the case $\beta<\gamma$ \& $\mathcal{G}_-(g(\beta))<0$ with $g(\gamma)>0$ and $0<-\mathcal{G}_-(g(\beta))<\mathcal{G}_+(1)-\mathcal{G}_+(g(\beta))$. In blue, we have represented $\mathcal{M}_0^-$ which corresponds to the zero level set of $\mathcal{H}_-$ in the half-plane $v\geq 0$. In red, we have shown three trajectories joining the upper branch of $\mathcal{M}_0^-$ to its lower branch. There is a one-parameter family of parametrized curves $\mathcal{M}_{v_0}^+$ for each $v_0\in[g(\gamma),g(\beta)]$ which are bounded to the left by $\mathcal{M}_{g(\gamma)}^+$ and to the right by $\mathcal{M}_{g(\beta)}^+$. Right: Illustration of a glued trajectory leading to a solution $V(x)$ of \eqref{eq:SolV}. Here, the blue part of the solution corresponds to the trajectory on $\mathcal{M}_0^-$ while the red part is associated to the trajectory on $\mathcal{M}_{v_0}^+$. Note the continuity of the solution at the gluing point $\pm x_0$.}
  \label{fig:Case1}
\end{figure}

\begin{itemize}
\item Suppose that $0<-\mathcal{G}_-(g(\beta))<\mathcal{G}_+(1)-\mathcal{G}_+(g(\beta))$. In this case, for each $g(\gamma)\leq v_0 \leq g(\beta)$ and $v_0>0$, the level set $\mathcal{H}_+(v,w)=\mathcal{G}_+(v_0)-\mathcal{G}_-(v_0)$, in the half plane $v\geq v_0$, can be parametrized as
\bqs
\mathcal{M}_{v_0}^+:=\left\{ (v,w)\in\R^2 ~|~ w = \pm \sqrt{2\left(\mathcal{G}_+(v_0)-\mathcal{G}_-(v_0)-\mathcal{G}_+(v)\right)}, \quad v_0 \leq v \leq v_* \right\},
\eqs
where $v_*>v_0$ satisfies $\mathcal{G}_+(v_*)=\mathcal{G}_+(v_0)-\mathcal{G}_-(v_0)$ and 
\bqs
\mathcal{G}_+(v_0)-\mathcal{G}_-(v_0)-\mathcal{G}_+(v)>0, \text{ for all } v\in(v_0,v_*).
\eqs
We refer to Figure~\ref{fig:Case1} for an illustration of the geometrical configurations of $\mathcal{M}_0^-$ and $\mathcal{M}_{v_0}^+$ in the phase plane $(v,w)\in\R^2$ with $v\geq0$.

As a consequence, we have constructed a one parameter family of continuous solutions $V$  which are positive and even and satisfy the following properties. For each $v_0\in [g(\gamma),g(\beta)]$ with $v_0>0$, the function $V:\R \to [0,v_*]$ is defined as 
\bqq
\label{eq:SolV}
V(x) = \left\{ 
\begin{array}{lcl}
V_-(x) & \text{ if } & |x|\geq x_0,\\
V_+(x) & \text{ if } & |x|\leq x_0,
\end{array}
\right.
\eqq
where $V_-$ satisfies 
\bqs
\mathcal{H}_-(V_-(x),V_-'(x))=0, \quad |x|\geq  x_0, \text{ with } V_-(\pm \infty)=0,
\eqs
and $V_+$ satisfies
\bqs
\mathcal{H}_+(V_+(x),V_+'(x))=\mathcal{G}_+(v_0)-\mathcal{G}_-(v_0), \quad |x|\leq x_0, 
\eqs
with the continuity condition
\bqs
V_-(\pm x_0)=V_+(\pm x_0)=v_0, \text{ and } V_-'(\pm x_0)=V_+'(\pm x_0)=\pm \sqrt{-2\mathcal{G}_-(v_0)}.
\eqs
Here the point of continuity $x_0$ for $V$ is uniquely defined by
\bqq
x_0 = \int_{v_0}^{v_*} \frac{\md v}{\sqrt{2\left(\mathcal{G}_+(v_0)-\mathcal{G}_-(v_0)-\mathcal{G}_+(v)\right)}}>0.
\label{defx0}
\eqq
Indeed, by symmetry of $V_+$, we define $x_0>0$ such that
\bqs
\left\{ 
\begin{array}{l}
V_+(-x_0)=v_0,\\
V_+(0) =v_*.
\end{array}
\right.
\eqs
Then, using the strict monotony of $V_+$ on the interval $(-x_0,0)$ we remark that
\bqs
x_0=\int_{-x_0}^0 \md x =\int_{-x_0}^0 \frac{\md V_+(x)}{V_+'(x)} = \int_{v_0}^{v_*} \frac{\md v}{\sqrt{2\left(\mathcal{G}_+(v_0)-\mathcal{G}_-(v_0)-\mathcal{G}_+(v)\right)}}.
\eqs

Coming back to our original problem, for each $v_0\in [g(\gamma),g(\beta)]$ with $v_0>0$, the function $U:\R\to [0,g_+^{-1}(v_*)]$ defined out of $V_\pm$ as
\bqq
U(x) = \left\{ 
\begin{array}{lcl}
U_-(x)=g_-^{-1}(V_-(x)) & \text{ if } & |x|> x_0,\\
U_+(x)=g_+^{-1}(V_+(x)) & \text{ if } & |x|< x_0,
\end{array}
\right.
\label{solU}
\eqq
is a solution of \eqref{GS} with precisely two points of discontinuity at $\pm x_0$ where
\bqs
0<\underset{x\rightarrow x_0^+}{\lim} U(x) = g_-^{-1}(v_0) < g_+^{-1}(v_0) =\underset{x\rightarrow x_0^-}{\lim} U(x).
\eqs
We refer to Figure~\ref{fig:groundstateUV} for an illustration of ground state solution in that case and its relation to the solution $V$.
\item Suppose that $-\mathcal{G}_-(g(\beta)) \geq \mathcal{G}_+(1)-\mathcal{G}_+(g(\beta))$. In this case, there exists a unique $v_c\in (0,g(\beta)]\cap[g(\gamma),1)$ such that 
\bqs
\mathcal{G}_+(v_c)-\mathcal{G}_-(v_c) = \mathcal{G}_+(1),
\eqs
by monotonicity of the map $v\mapsto \mathcal{G}_+(v)-\mathcal{G}_-(v)$ on $(0,g(\beta)]\cap[g(\gamma),1)$. As a consequence, we can proceed along similar lines as in the previous case, and we can construct ground state solutions of the form \eqref{solU} for each $v_0\in[g(\gamma),v_c)$ with $v_0>0$. We refer to Figure~\ref{fig:CasePin} for a visualization of the phase plane configuration in that case.
\end{itemize}

\begin{figure}[t!]
  \centering
 \subfigure[Solution $V(x)$.]{   \includegraphics[width=.475\textwidth]{GroundStateV.pdf}}
  \subfigure[Ground state $U(x)$.]{   \includegraphics[width=.475\textwidth]{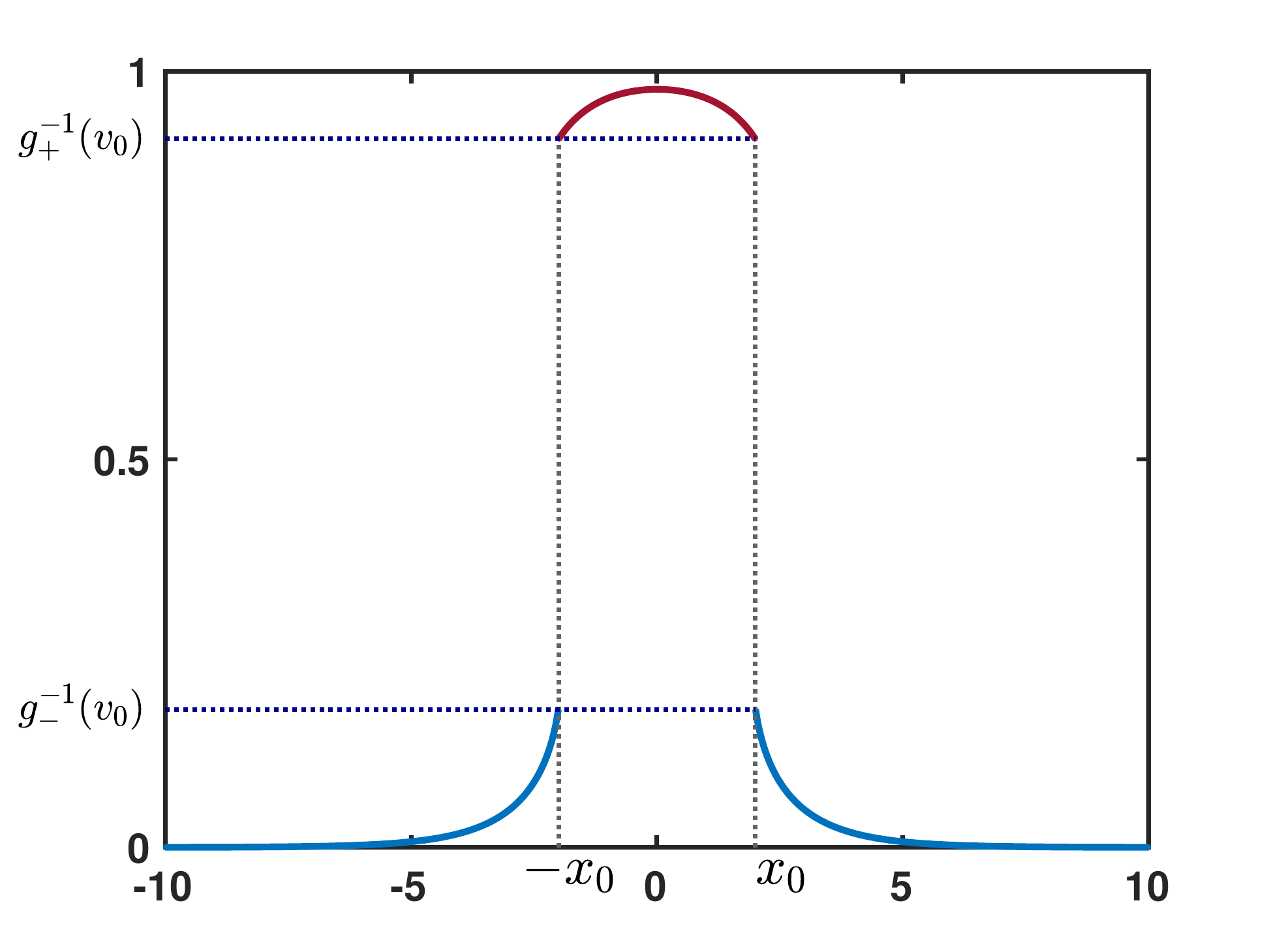}}
  \caption{Illustration of the relationship between the solution $V(x)$ and the ground state $U(x)$ in the setting of Lemma~\ref{lem43}. The profile $V$ is constructed by gluing the solutions of the two problems $V-V''=g_{-}^{-1}(V)$ (blue part of the solution) and $V-V''=g_{+}^{-1}(V)$ (dark red part of the solution) via a phase plane analysis. The ground state solution is then defined out of $V$ by using $g_{\pm}^{-1}$ on each branch as given in formula \eqref{solU}. Unlike $V$, the ground state solution $U$ has now two discontinuity points at $\pm x_0$.}
  \label{fig:groundstateUV}
\end{figure}

We have then proved the following result.

\begin{lem}\label{lem43}
Assume that $g$ satisfies Hypothesis~\ref{hypg}. Assume that $\beta<\gamma$ and $\mathcal{G}_-(g(\beta))<0$. Then, there exists a one parameter family of discontinuous ground state solutions of equation \eqref{GS} given by \eqref{solU} which are even, positive, with precisely two points of jump discontinuity and smooth otherwise. Furthermore, there is no other solution to \eqref{GS}.
\end{lem}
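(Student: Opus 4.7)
The statement has two halves: existence of the one-parameter family, and the absence of any other ground state.

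\textbf{Existence (construction).} My plan is to verify that the gluing performed in the preceding discussion indeed yields a $C^1$ function $V$, and hence via \eqref{solU} a well-defined discontinuous $U$ that solves \eqref{GS}. For each admissible $v_0$ (either $v_0 \in [g(\gamma), g(\beta)]$ with $v_0>0$ in the subcase $0 < -\mathcal{G}_-(g(\beta)) < \mathcal{G}_+(1) - \mathcal{G}_+(g(\beta))$, or $v_0 \in [g(\gamma), v_c)$ with $v_0>0$ otherwise), I integrate the smooth ODE $V - V'' = g_-^{-1}(V)$ along the branch $\mathcal{H}_- = 0$ from $+\infty$ inward, obtaining an even $V_-$ on $|x| \geq x_0$ with $V_-(\pm\infty)=0$ and $V_-(\pm x_0) = v_0$. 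Similarly I integrate $V - V'' = g_+^{-1}(V)$ at energy $\mathcal{G}_+(v_0) - \mathcal{G}_-(v_0)$ on $|x| \leq x_0$ to obtain an even $V_+$ with $V_+(0) = v_*$ and $V_+(\pm x_0) = v_0$. By design the slopes $V_\pm'(\pm x_0) = \pm\sqrt{-2\mathcal{G}_-(v_0)}$ match, so $V \in C^1(\R)$. Defining $U$ by \eqref{solU} gives $d(V-U)+f(U)=0$ pointwise, and $V = \K * U$ follows from $V - V'' = U$ distributionally via the Fourier characterization $\widehat{\K}(\xi) = (1+\xi^2)^{-1}$.

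\textbf{Uniqueness.} Let $U$ be any (even, positive) ground state and set $V := \K * U$, so that $V \in C^1(\R)$, $V = g(U)$ pointwise (from $f(U) = d(V-U)$), $V$ is even, $V(\pm\infty) = 0$, and $V'(0)=0$. For $|x|$ large $U(x)$ is small and so lies in $[0,\beta)$; thus $U = g_-^{-1}(V)$ and $(V, V') \in \mathcal{M}_0^-$. The hypothesis $\mathcal{G}_-(g(\beta)) < 0$ extends to $\mathcal{G}_- < 0$ on $(0, g(\beta)]$, so on $\mathcal{M}_0^-$ the coordinate $V'$ vanishes only at the origin. Combined with $V'(0)=0$ and $U \not\equiv 0$, the trajectory must leave $\mathcal{M}_0^-$ at some smallest $x_0 > 0$. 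Such a departure requires a jump of $U$, and continuity of $V, V'$ at the jump forces $v_0 := V(x_0) \in [g(\gamma), g(\beta)]$ (the only $V$-values with multi-valued $g^{-1}$) and the post-jump trajectory onto the level set $\mathcal{M}_{v_0}^+$. Evenness then pins $V(0) = v_*$ and fixes $x_0$ via \eqref{defx0}, so $U$ coincides with the constructed profile associated to this specific $v_0$.

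\textbf{Main obstacle.} The delicate part is ruling out two competing scenarios that could yield additional ground states: (i) $U$ taking values on the middle decreasing branch $(\beta, \gamma)$ of $g$, and (ii) more than the symmetric pair of jumps at $\pm x_0$. Both rely on the strict monotonicity of $v \mapsto \mathcal{G}_+(v) - \mathcal{G}_-(v)$ on $[g(\gamma), g(\beta)]$, which follows from $g_+^{-1}(v) > g_-^{-1}(v)$ there. For (ii), this monotonicity implies that any transition between level sets $\mathcal{M}_{v_0}^+$ and $\mathcal{M}_{v_1}^+$, or back to $\mathcal{M}_0^-$, can only occur at the same value $v_0$, so additional jumps merely reverse previous ones; combined with the finite $x$-length of the orbit on $\mathcal{M}_{v_0}^+$ joining $(v_0, -w_0)$ to $(v_*, 0)$, evenness leaves no room for extra transitions. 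For (i), a jump from $\mathcal{M}_0^-$ onto the middle-branch Hamiltonian level set would pin an energy whose orbit cannot simultaneously satisfy $V'(0)=0$ from evenness and connect back to $\mathcal{M}_0^-$ at infinity, contradicting the asymptotic condition.
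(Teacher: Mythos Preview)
Your proposal is correct and follows essentially the same route as the paper: the lemma is stated there as a summary of the preceding phase-plane construction, so your existence argument reproduces that construction, while your uniqueness argument makes explicit what the paper leaves implicit in the phrase ``there is no other solution''. In fact you supply more detail than the paper on the exclusion of the middle branch and of additional jumps; those sketches are adequate for the level of rigor the paper itself adopts.
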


\begin{figure}[t!]
  \centering
 \includegraphics[width=.5\textwidth]{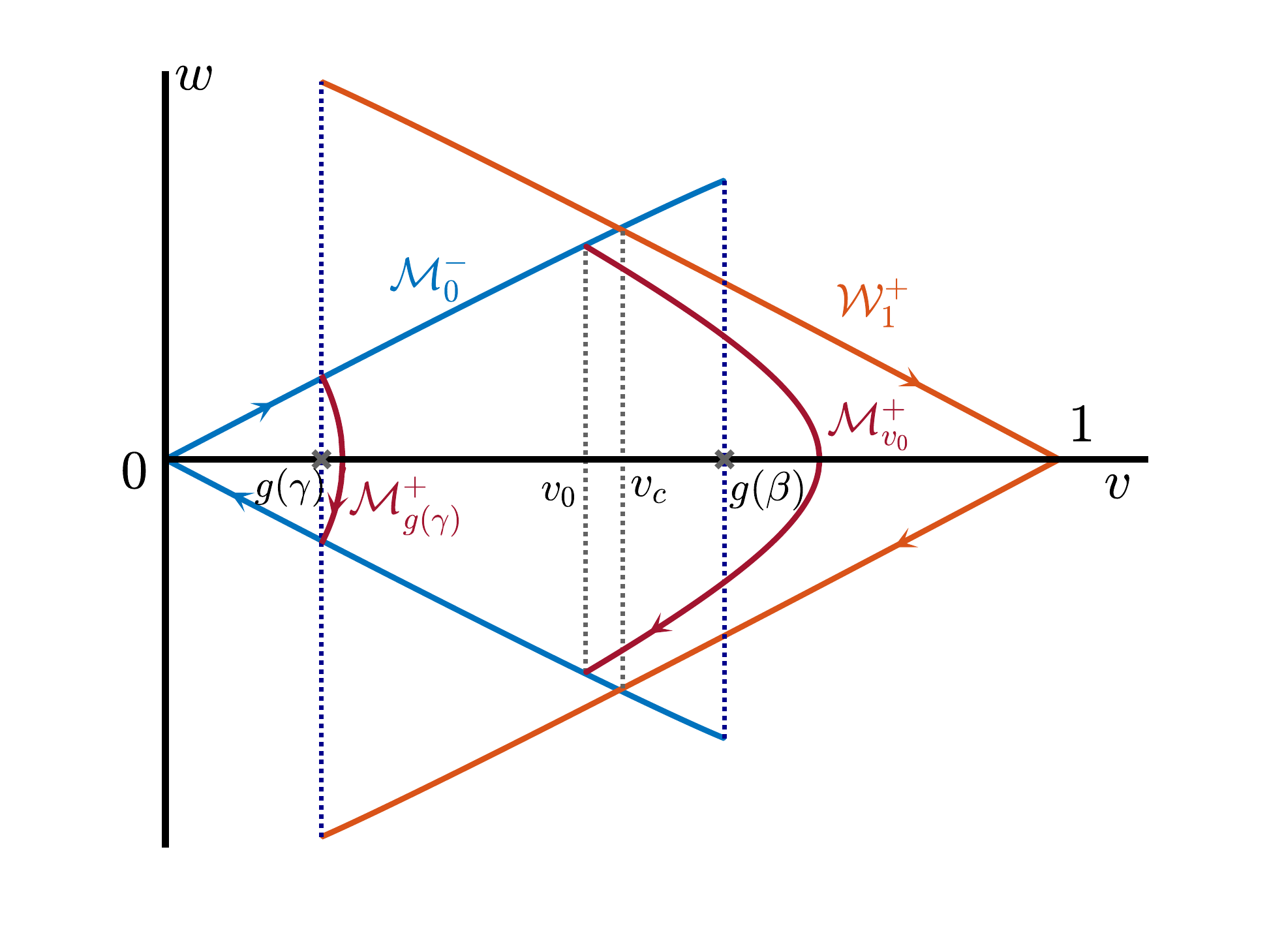}
  \caption{Illustration of the phase plane analysis and the gluing of trajectories in the case $\beta<\gamma$ \& $\mathcal{G}_-(g(\beta))<0$ with $g(\gamma)>0$ within the pinning region $-\mathcal{G}_-(g(\beta)) \geq \mathcal{G}_+(1)-\mathcal{G}_+(g(\beta))$. In blue, we have represented $\mathcal{M}_0^-$ which corresponds to the zero level set of $\mathcal{H}_-$ in the half-plane $v\geq 0$, while in orange we have represented $\mathcal{W}_1^+$ which corresponds to the level set of $\mathcal{H}_+$ associated to $(1,0)$. Note that $\mathcal{M}_0^-$ and $\mathcal{W}_1^+$ intersect at two points given by $(v_c,\pm\sqrt{-2\mathcal{G}_-(v_c)})$ with $v_c\in (g(\gamma),g(\beta))$. In red, we have shown two trajectories joining the upper branch of $\mathcal{M}_0^-$ to its lower branch. There is a one-parameter family of parametrized curves $\mathcal{M}_{v_0}^+$ for each $v_0\in[g(\gamma),v_c)$ which are bounded to the left by $\mathcal{M}_{g(\gamma)}^+$ and to the right by $\mathcal{W}_1^+$. }
  \label{fig:CasePin}
\end{figure}

\paragraph{Case $\beta<\gamma$ \& $\mathcal{G}_-(g(\beta))>0$.} In the case when $\mathcal{G}_-(g(\beta))>0$, we denote by $v_m $ the positive root of $\mathcal{G}_-$ in the interval $(0,g(\beta))$ that is $\mathcal{G}_-(v_m)=0$ and $\mathcal{G}_-(v)<0$ for all $v \in(0,v_m)$ with $v_m\in(0,g(\beta))$. As a consequence, the zero level set 
$\mathcal{H}_-$ in the half-plane $v\geq0$, still denoted $\mathcal{M}_0^-$,  can be parametrized as
\bqs
\mathcal{M}_0^-:=\left\{ (v,w)\in\R^2 ~|~ w = \pm \sqrt{-2\mathcal{G}_-(v)}, \quad 0 \leq v \leq v_m \right\}.
\eqs
The manifold $\mathcal{M}_0^-$ is a closed curve, since $\mathcal{G}_-(v_m)=0$, which is homoclinic to the origin. As a consequence, it gives the existence of a smooth, even, positive function $V_s:\R \to [0,v_m]$ solution of
\bqs
V-V''=g_{-}^{-1}(V), \text{ on } \R,
\eqs
which satisfies $V_s(\pm\infty)=0$. In turn, this gives the existence of a smooth, even, positive function $U_s:\R \to [0,g_-^{-1}(v_m)]$ solution to \eqref{GS} also verifying $U_s(\pm\infty)=0$ which is given by $U_s(x) = g_-^{-1}(V_s(x))$ for all $x\in\R$.

We now investigate if it is possible to construct discontinuous solutions following the same strategy as in the previous case. In order to be able to intersect $\mathcal{M}_0^-$ with a trajectory coming from the system defined near $u=1$ through $g_{+}^{-1}$, it is necessary that $g(\gamma)<v_m$, since in that case for any $0<v_0\in[g(\gamma),v_m]$, we can solve for
 \bqs
\mathcal{H}_+(V,V')=\mathcal{G}_+(v_0)-\mathcal{G}_-(v_0).
\eqs
The above level set, in the half plane $v\geq v_0$, can be once again parametrized as
\bqs
\mathcal{M}_{v_0}^+:=\left\{ (v,w)\in\R^2 ~|~ w = \pm \sqrt{2\left(\mathcal{G}_+(v_0)-\mathcal{G}_-(v_0)-\mathcal{G}_+(v)\right)}, \quad v_0 \leq v \leq v_* \right\},
\eqs
where $v_*>v_0$ satisfies $\mathcal{G}_+(v_*)=\mathcal{G}_+(v_0)-\mathcal{G}_-(v_0)$ and 
\bqs
\mathcal{G}_+(v_0)-\mathcal{G}_-(v_0)-\mathcal{G}_+(v)>0, \text{ for all } v\in(v_0,v_*).
\eqs
Proceeding as above, we get the existence of discontinuous solution $U$ of the form \eqref{solU} constructed from $V_-$ and $V_+$ lying on $\mathcal{M}_0^-$ and $\mathcal{M}_{v_0}^+$ respectively. Finally, we note that as long as $g(\gamma)\geq v_m$ no discontinuous solutions with two points discontinuity can be constructed. In conclusion, we have obtained the following result. These constructions are illustrated in Figure~\ref{fig:Case2}.

\begin{figure}[t!]
  \centering
 \subfigure[Case $\mathcal{G}_-(g(\gamma))< 0$.]{  \includegraphics[width=.45\textwidth]{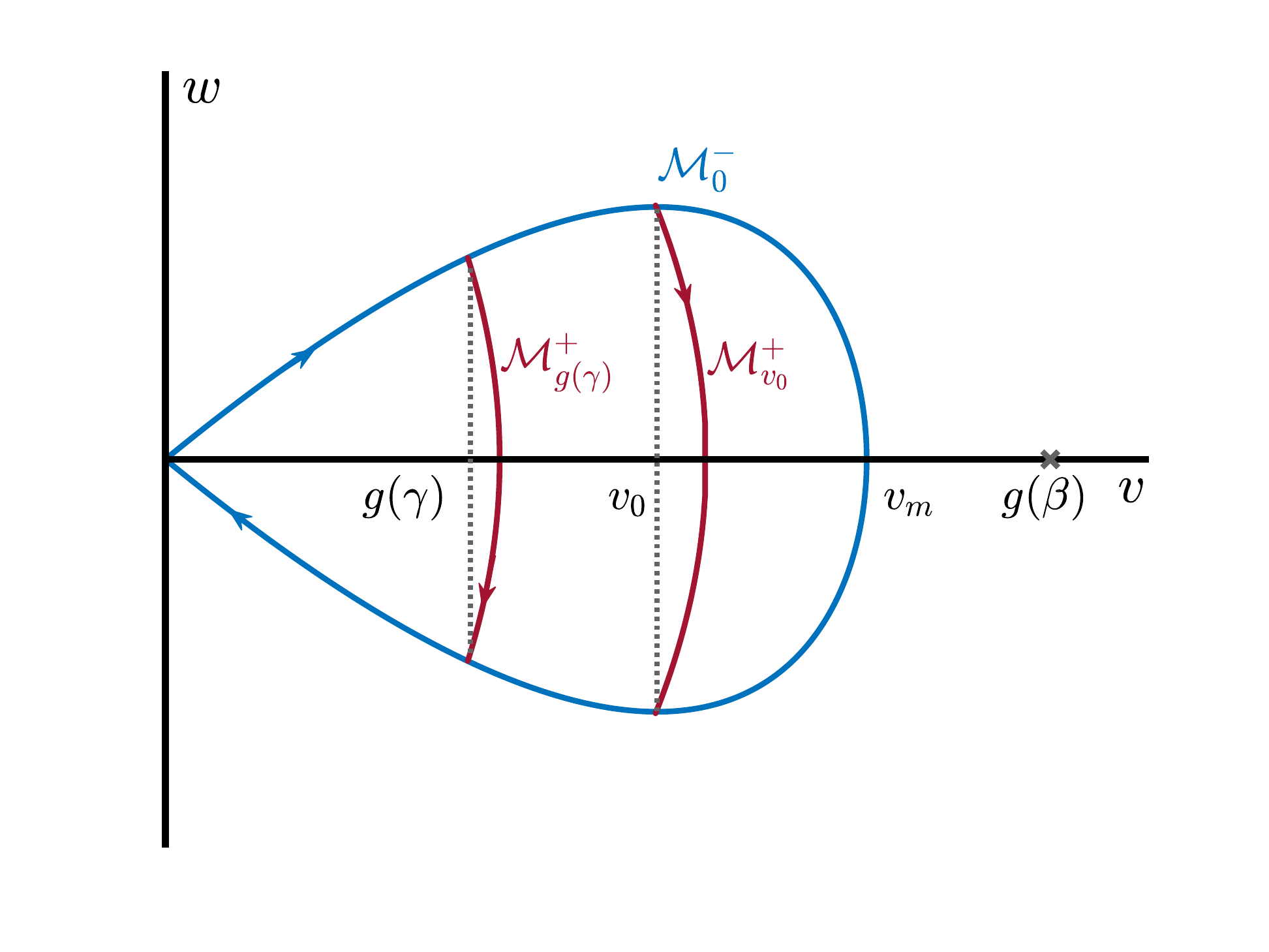} }
  \subfigure[Case $\mathcal{G}_-(g(\gamma))> 0$.]{  \includegraphics[width=.45\textwidth]{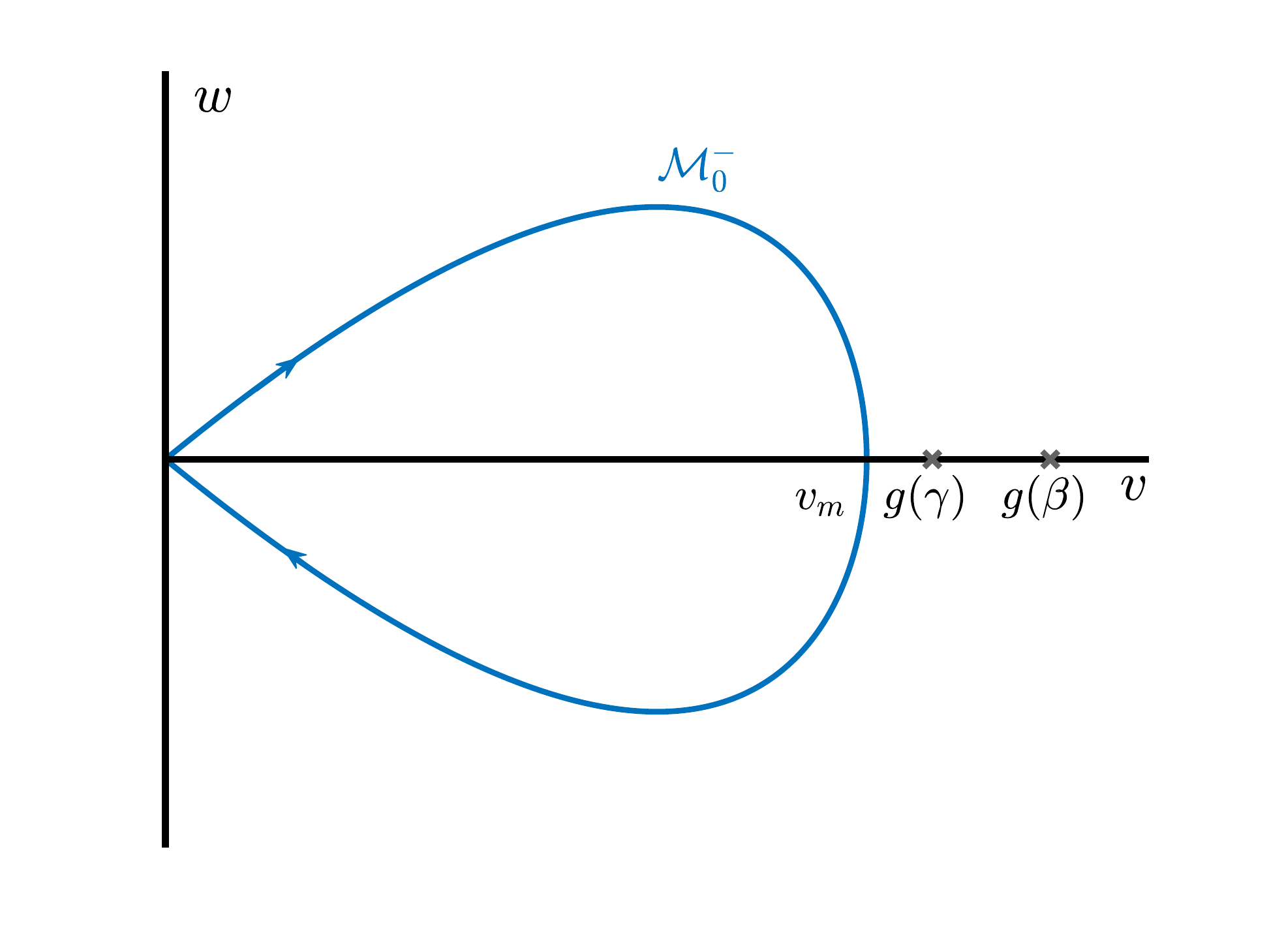} }
  \caption{Illustration of the phase plane analysis and the gluing of trajectories in the case $\beta<\gamma$ \& $\mathcal{G}_-(g(\beta))>0$ with $g(\gamma)>0$ depending on the sign of $\mathcal{G}_-(g(\gamma))$. In blue, we have represented $\mathcal{M}_0^-$ which corresponds to the zero level set of $\mathcal{H}_-$ in the half-plane $v\geq 0$ which forms a homoclinic trajectory to the origin, symmetric in $w$ with extremal value at $v=v_m$. On the left panel (a), when $\mathcal{G}_-(g(\gamma))< 0$,  we have shown in red two trajectories joining the upper part of $\mathcal{M}_0^-$ to its lower part illustrating the fact that there is a one-parameter family of parametrized curves $\mathcal{M}_{v_0}^+$ for each $v_0\in[g(\gamma),v_m]$ which are bounded to the left by $\mathcal{M}_{g(\gamma)}^+$ and to the right by $(v_m,0)$. On the right panel (b), when $\mathcal{G}_-(g(\gamma))> 0$, we have the strict ordering $v_m< g(\gamma) < g(\beta)$ and there is no other possible trajectory than $\mathcal{M}_0^-$.}
  \label{fig:Case2}
\end{figure}

\begin{lem}\label{lem44}
Assume that $g$ satisfies Hypothesis~\ref{hypg}. Assume that $\beta<\gamma$ and $\mathcal{G}_-(g(\beta))>0$. Then there always exists a smooth, even, positive function $U_s:\R \to [0,g_-^{-1}(v_m)]$ solution to \eqref{GS} which is given by $U_s(x) = g_-^{-1}(V_s(x))$ for all $x\in\R$ where $V_s$ is the unique positive homoclinic orbit to the origin of $V-V''=g_{-}^{-1}(V)$. Furthermore, we have:
\begin{itemize}
\item[(i)] if $g(\gamma)>0$ and $\mathcal{G}_-(g(\gamma))\geq 0$, then there is no other solution to \eqref{GS};
\item[(ii)] if $g(\gamma)\leq0$ or $g(\gamma)>0$ and $\mathcal{G}_-(g(\gamma))< 0$, then there is a one parameter family of discontinuous ground state solutions $U$ of equation \eqref{GS} which are even, positive, with precisely two points of jump discontinuity and smooth otherwise, and there is no other solution.
\end{itemize}
\end{lem}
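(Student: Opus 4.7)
The plan is to formalize the phase-plane construction sketched in the paragraphs immediately preceding the statement. First I would establish existence of $V_s$: using $\mathcal{G}_-(0)=0$ together with $\mathcal{G}_-''(0) = 1/g'(0)-1<0$ (which follows from $g'(0) = 1 - f'(0)/d > 1$ since $f'(0)<0$), one gets $\mathcal{G}_-<0$ on a right neighborhood of $0$. Combined with the hypothesis $\mathcal{G}_-(g(\beta))>0$, the intermediate value theorem yields a smallest $v_m\in(0,g(\beta))$ with $\mathcal{G}_-(v_m)=0$ and $\mathcal{G}_-<0$ on $(0,v_m)$. The zero level set of $\mathcal{H}_-$ in the half-plane $v\geq 0$ is therefore a smooth closed loop through $(v_m,0)$ homoclinic to the origin, which integrates to a unique smooth, even, positive homoclinic solution $V_s$ of $V-V''=g_-^{-1}(V)$ with $V_s(\pm\infty)=0$; setting $U_s=g_-^{-1}(V_s)$ gives the smooth ground state asserted in the statement.

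Next I would classify all ground states. For any solution $U$ of \eqref{GS}, set $V=\K*U$. Since $\K(x)=e^{-|x|}/2$ is Lipschitz and $U$ is bounded, a direct computation shows $V\in \mathscr{C}^1(\R)$; moreover the pointwise identity $0=d(-U+V)+f(U)$ is equivalent to $V=g(U)$, so $U$ takes values only where $g$ is locally invertible and equals either $g_-^{-1}(V)$ or $g_+^{-1}(V)$ according to the branch. As $|x|\to\infty$, $U,V\to 0$, which forces the branch to be $g_-^{-1}$ and $(V,V')$ to lie on $\mathcal{M}_0^-$. Any interior region where $U=g_+^{-1}(V)$ must border the outer region at gluing points $\pm x_0$, and $\mathscr{C}^1$-continuity of $V$ there forces $(V(\pm x_0),V'(\pm x_0))\in \mathcal{M}_0^-\cap \mathcal{M}_{v_0}^+$ for some $v_0=V(\pm x_0)$. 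Because $\mathcal{M}_0^-$ projects to $v\in[0,v_m]$ while $\mathcal{M}_{v_0}^+$ requires $v_0\in[g(\gamma),1]$, nontrivial gluing exists if and only if $g(\gamma)<v_m$, with strict inequality since at $v=v_m$ the trajectory on $\mathcal{M}_0^-$ has zero velocity and cannot leave transversely.

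In case (i), $g(\gamma)>0$ combined with $\mathcal{G}_-(g(\gamma))\geq 0$ forces $g(\gamma)\geq v_m$ (using $\mathcal{G}_-<0$ on $(0,v_m)$), so no gluing is admissible and $U_s$ is the unique ground state. In case (ii), either $g(\gamma)\leq 0$ or $\mathcal{G}_-(g(\gamma))<0$ produces $g(\gamma)<v_m$, and the construction of Lemma~\ref{lem43} then applies verbatim on the admissible range of $v_0$: each $v_0\in(\max\{g(\gamma),0\},v_m)$ yields a discontinuous ground state of the form \eqref{solU} with exactly two jump points $\pm x_0$ given by the analog of \eqref{defx0}.

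The main obstacle is completeness --- ruling out ground states with more than two discontinuity points or with unexpected structure. This reduces to a rigidity argument on the phase plane: each smooth piece of $V$ is determined by its Hamiltonian value; the outer value is pinned to $0$ by $V(\pm\infty)=0$; $\mathscr{C}^1$-matching at the first gluing point uniquely fixes the inner Hamiltonian value; and any further re-gluing would either break $\mathscr{C}^1$-continuity of $V$ or prevent the required decay at infinity on the return leg. Combined with the imposed evenness, this forces the two-jump structure and uniqueness of the profile once the parameter $v_0$ is fixed.
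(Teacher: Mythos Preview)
Your proposal is correct and follows essentially the same phase-plane/Hamiltonian approach as the paper: locate $v_m$ as the first zero of $\mathcal{G}_-$, read off the homoclinic $V_s$ and hence $U_s$, and then decide whether gluing to an $\mathcal{M}_{v_0}^+$ trajectory is possible according to whether $g(\gamma)<v_m$. You supply a little more detail than the paper in two places --- the computation $\mathcal{G}_-''(0)=1/g'(0)-1<0$ justifying the sign of $\mathcal{G}_-$ near the origin, and the rigidity sketch for the ``no other solution'' clause --- but neither constitutes a different route.
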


\paragraph{Case $\beta<\gamma$ \& $\mathcal{G}_-(g(\beta))=0$.} If we suppose that $\mathcal{G}_-(g(\beta))=0$, then we necessarily have that
\bqs
\mathcal{G}_-(V) < 0, \text{ for all } V\in(0,g(\beta)).
\eqs
As a consequence, we can proceed exactly as in the case where $\mathcal{G}_-(g(\beta))<0$ to prove the existence of a one parameter family of discontinuous ground state solutions $U$ of the form \eqref{solU} which are even, positive, with precisely two points of jump discontinuity and smooth otherwise. The main difference is that now the zero level set of $\mathcal{H}_-$ in the half-plane $v\geq0$, which is parametrized by 
\bqs
\mathcal{M}_0^-:=\left\{ (v,w)\in\R^2 ~|~ w = \pm \sqrt{-2\mathcal{G}_-(v)}, \quad 0 \leq v \leq g(\beta) \right\},
\eqs
is a closed curve since $\mathcal{G}_-(g(\beta))=0$. This gives a homoclinic trajectory to the origin and thus produces a solution $V_s:\R\to[0,g(\beta)]$ to $V-V''=g_{-}^{-1}(V)$ which satisfies $V_s(\pm\infty)=0$. At the origin we have $V_s(0)=g(\beta)$ such that $U_s(x)=g_{-}^{-1}(V_s(x))$ is only continuous at $x=0$ and smooth elsewhere. This can be summarized as follows.

\begin{figure}[t!]
  \centering
  \includegraphics[width=.4\textwidth]{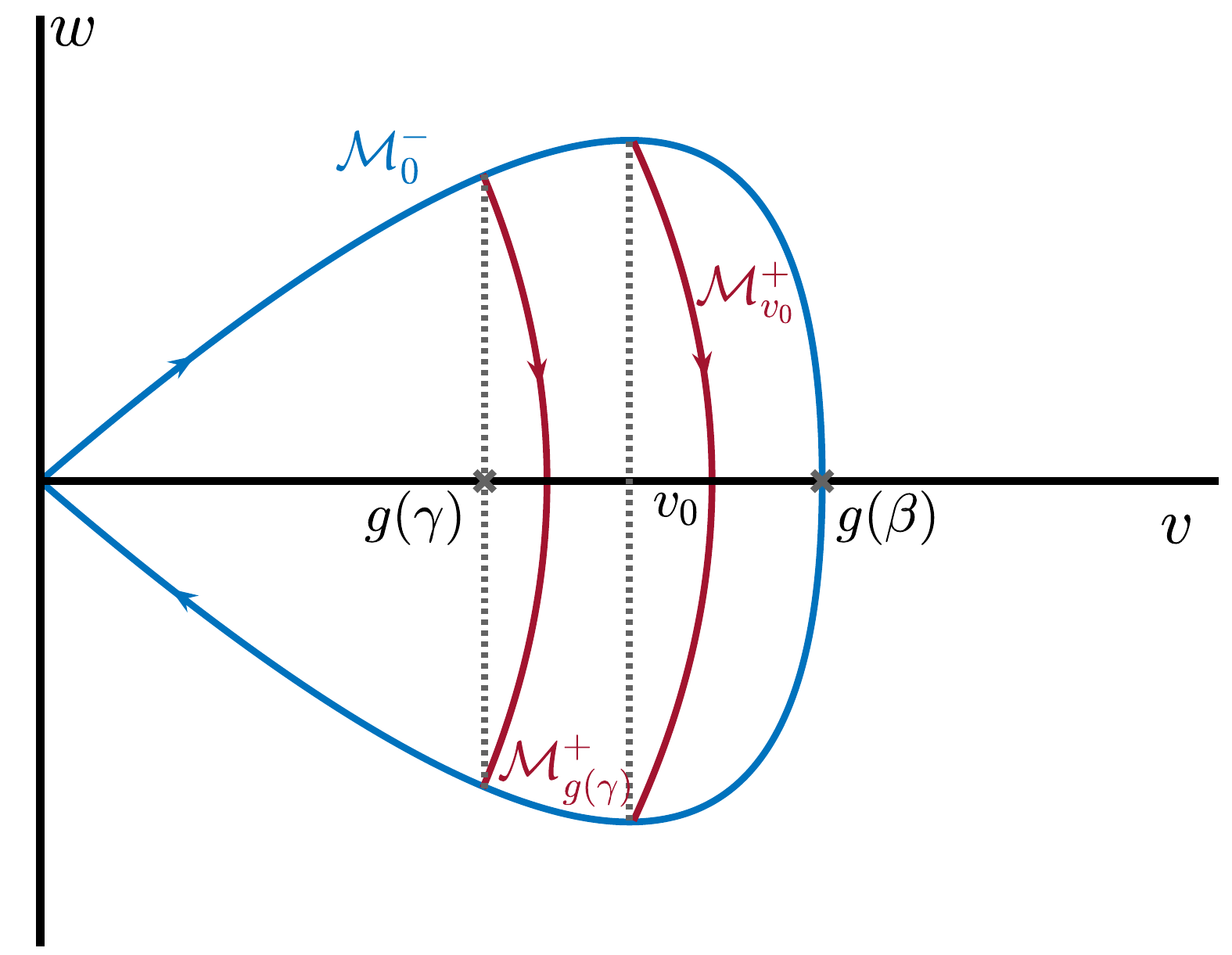}
  \caption{Illustration of the phase plane analysis and the gluing of trajectories in the case $\beta<\gamma$ \& $\mathcal{G}_-(g(\beta))=0$ with $g(\gamma)>0$. In blue, we have represented $\mathcal{M}_0^-$ which corresponds to the zero level set of $\mathcal{H}_-$ in the half-plane $v\geq 0$ which forms a homoclinic trajectory to the origin, symmetric in $w$ with extremal value at $v=g(\beta)$. We have shown in red two trajectories joining the upper part of $\mathcal{M}_0^-$ to its lower part illustrating the fact that there is a one-parameter family of parametrized curves $\mathcal{M}_{v_0}^+$ for each $v_0\in[g(\gamma),g(\beta)]$ which are bounded to the left by $\mathcal{M}_{g(\gamma)}^+$ and to the right by $(g(\beta),0)$.}
  \label{fig:Case3}
\end{figure}

\begin{lem}\label{lem45}
Assume that $g$ satisfies Hypothesis~\ref{hypg}. Assume that $\beta<\gamma$ and $\mathcal{G}_-(g(\beta))=0$. Then, there exists a one parameter family of discontinuous ground state solutions of equation \eqref{GS} given by \eqref{solU} which are even, positive, with precisely two points of jump discontinuity and smooth otherwise together with an even, positive solution which is smooth on $(-\infty,0)\cup (0,+\infty)$ and continuous at the origin. Furthermore, there is no other solution to \eqref{GS}.
\end{lem}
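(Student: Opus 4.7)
The plan is to adapt the phase-plane constructions of Lemma~\ref{lem43} and Lemma~\ref{lem44} to the borderline case $\mathcal{G}_-(g(\beta))=0$. Since $\mathcal{G}_-(0)=\mathcal{G}_-(g(\beta))=0$ and $\mathcal{G}_-<0$ strictly on $(0,g(\beta))$, the set $\mathcal{M}_0^-$ is still parametrized by $w=\pm\sqrt{-2\mathcal{G}_-(v)}$ for $v\in[0,g(\beta)]$, but now closes up into a homoclinic loop for the reduced dynamics $V-V''=g_-^{-1}(V)$, meeting the axis $w=0$ exactly at $(0,0)$ and $(g(\beta),0)$.

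For the discontinuous family I would reproduce verbatim the construction of Lemma~\ref{lem43}. For every $v_0\in[g(\gamma),g(\beta))$ with $v_0>0$ one still has $\mathcal{G}_-(v_0)<0$, so the level set $\mathcal{M}_{v_0}^+$ meets $\mathcal{M}_0^-$ transversally at $(v_0,\pm\sqrt{-2\mathcal{G}_-(v_0)})$, and gluing the $\mathcal{M}_0^-$-trajectory for $|x|\geq x_0$ to the $\mathcal{M}_{v_0}^+$-trajectory for $|x|\leq x_0$ at the point $x_0$ prescribed by \eqref{defx0} produces a continuous $V$, whence via $U=g_\pm^{-1}(V)$ the ground state \eqref{solU} with exactly two jumps at $\pm x_0$.

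The genuinely new object is the extremal trajectory on the homoclinic loop itself. I would compute $\mathcal{G}_-'(g(\beta))=-g(\beta)+\beta=f(\beta)/d$, observe that the sign conditions on $\mathcal{G}_-$ force $\mathcal{G}_-'(g(\beta))\geq 0$, and argue that the inequality is strict in the relevant regime (equality would make $(g(\beta),0)$ an equilibrium of the reduced ODE and is incompatible with $\mathcal{G}_-$ having a simple zero at $g(\beta)$). Since then $-\mathcal{G}_-(v)\sim \mathcal{G}_-'(g(\beta))(g(\beta)-v)$ near the upper endpoint, the travel time $\int_0^{g(\beta)}\mathrm{d}v/\sqrt{-2\mathcal{G}_-(v)}$ is finite, so the loop is traversed in finite time. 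Fixing its symmetry axis at $x=0$ yields a smooth, even $V_s:\R\to[0,g(\beta)]$ with $V_s(0)=g(\beta)$, $V_s'(0)=0$ and $V_s(\pm\infty)=0$. Setting $U_s:=g_-^{-1}(V_s)$, smoothness on $\R\setminus\{0\}$ is immediate, continuity at $0$ comes from continuity of $g_-^{-1}$ up to its endpoint $g(\beta)$, and failure of smoothness at $0$ comes from $g'(\beta)=0$: inverting the near-endpoint expansion gives $g_-^{-1}(g(\beta)-\varepsilon)=\beta-\sqrt{2\varepsilon/|g''(\beta)|}+O(\varepsilon)$, which combined with $V_s(x)=g(\beta)-\tfrac{1}{2}|V_s''(0)|x^2+O(x^4)$ produces the cusp profile $U_s(x)=\beta-C|x|+O(x^2)$ for some $C>0$.

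Uniqueness follows the template of the earlier lemmas: any ground state $U$ of \eqref{GS} gives $V:=\K*U$ satisfying $V-V''=U$ with $V(\pm\infty)=0$, and on any interval of continuity $U$ coincides with one of $g_\pm^{-1}(V)$ according to the range of $V$. Asymptotic decay forces the tail trajectory to lie on $\mathcal{M}_0^-$; Hypothesis~\ref{hypg} bounds the number of branch-changes to at most one per half-line; and evenness together with the conserved values of $\mathcal{H}_\pm$ then forces the trajectory either to remain on $\mathcal{M}_0^-$ throughout (yielding $U_s$) or to transition once at a unique $\pm x_0$ onto some $\mathcal{M}_{v_0}^+$ with $v_0\in[g(\gamma),g(\beta))$ (yielding the discontinuous family). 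The main obstacle I anticipate is cleanly excluding the borderline candidate $v_0=g(\beta)$ from the discontinuous family, since the associated gluing point has $w_0=0$ and the two Hamiltonian level sets become tangent there; the correct reading is that this limiting configuration is precisely the continuous profile $U_s$ rather than a genuinely discontinuous solution, and this must be checked explicitly against the matching conditions.
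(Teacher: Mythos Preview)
Your proposal follows the same phase-plane approach as the paper: you treat the discontinuous family exactly as in the case $\mathcal{G}_-(g(\beta))<0$ by gluing $\mathcal{M}_0^-$ to $\mathcal{M}_{v_0}^+$, and you obtain the continuous-but-not-smooth solution from the homoclinic loop $\mathcal{M}_0^-$, which now closes at $(g(\beta),0)$. The paper's own argument is terser---it simply asserts that the homoclinic gives $V_s$ with $V_s(0)=g(\beta)$ and that $U_s=g_-^{-1}(V_s)$ is continuous but not smooth at the origin---so your finite-travel-time check via $\mathcal{G}_-'(g(\beta))>0$ and your explicit cusp expansion $U_s(x)=\beta-C|x|+O(x^2)$ are welcome added rigor rather than a different method.
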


When $\beta=\gamma$, we can reproduce the analysis conducted above and we obtain.

\begin{lem}\label{lembetaeqgam}
Assume that $g$ satisfies Hypothesis~\ref{hypg} and that $\beta=\gamma$. Then,
\begin{itemize}
\item[(i)] if $\mathcal{G}_-(g(\beta))>0$, then there exists a unique smooth, even, positive ground state;
 \item[(ii)] if $\mathcal{G}_-(g(\beta))=0$, then there exists a unique even, positive ground state which is smooth on $(-\infty,0)\cup (0,+\infty)$ and continuous at the origin;
 \item[(iii)] if $\mathcal{G}_-(g(\beta))<0$, then there exists a unique even, positive ground state with precisely two points of jump discontinuity and smooth otherwise.
\end{itemize}
\end{lem}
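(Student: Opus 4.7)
The plan is to transplant the phase-plane analysis developed in the proofs of Lemmas~\ref{lem43}--\ref{lem45} to the degenerate situation $\beta=\gamma$. The crucial observation is that the admissible gluing interval $[g(\gamma),g(\beta)]$, which parametrized the one-parameter families produced in those lemmas, now collapses to the single point $\{g(\beta)\}$; the gluing abscissa $v_0$ therefore has no remaining degree of freedom, and this is precisely what should convert the one-parameter families into unique ground states in each of the three sub-cases. Throughout I would keep working in the phase plane $(v,w)=(V,V')$ with the Hamiltonians $\mathcal{H}_\pm$, and discriminate the sub-cases by the sign of $\mathcal{G}_-(g(\beta))$ exactly as before.

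For case (i), with $\mathcal{G}_-(g(\beta))>0$, I would first produce the positive root $v_m\in(0,g(\beta))$ of $\mathcal{G}_-$. The zero level set $\mathcal{M}_0^-$ is then a closed homoclinic loop to the origin lying strictly in the strip $\{0\le v\le v_m\}$, and, exactly as in the opening of Lemma~\ref{lem44}, it yields a unique smooth, even, positive homoclinic orbit $V_s$ of $V-V''=g_-^{-1}(V)$ with $V_s(\pm\infty)=0$. Because $V_s$ never reaches the critical value $g(\beta)$, the map $g_-^{-1}$ is smooth along its image and $U_s=g_-^{-1}(V_s)$ is a smooth ground state. Uniqueness follows from the fact that $\mathcal{M}_0^-$ does not touch $v=g(\beta)$, so no level set $\mathcal{M}_{v_0}^+$ can be attached at the only admissible gluing abscissa $v_0=g(\beta)$.

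For case (ii), with $\mathcal{G}_-(g(\beta))=0$, the curve $\mathcal{M}_0^-$ is still a closed homoclinic loop but now just touches the $v$-axis at $(g(\beta),0)$; following the proof of Lemma~\ref{lem45} one gets a unique even, positive orbit $V_s$ with $V_s(0)=g(\beta)$. The profile $U_s=g_-^{-1}(V_s)$ is smooth on $\R\setminus\{0\}$, but at the origin $g_-^{-1}$ is evaluated at its endpoint, where $g'(\beta)=0$ produces an unbounded derivative, so $U_s$ is merely continuous (with a vertical tangent) at $x=0$. In case (iii), with $\mathcal{G}_-(g(\beta))<0$, $\mathcal{M}_0^-$ reaches $v=g(\beta)$ with nonzero speed $\pm w_0:=\pm\sqrt{-2\mathcal{G}_-(g(\beta))}$, and the only way to close the orbit symmetrically is to glue it with the level set $\mathcal{M}_{g(\beta)}^+$ of $\mathcal{H}_+$ through $(g(\beta),\pm w_0)$, exactly as in Lemma~\ref{lem43}. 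This produces a unique continuous $V$-profile of the form \eqref{eq:SolV} with $V(\pm x_0)=g(\beta)$ and $V'(\pm x_0)\neq 0$, and from it a unique candidate ground state $U$ built branch by branch via \eqref{solU}, with the two non-smooth points promised by the statement located at $\pm x_0$.

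The delicate step will be the analysis at the singular value $V=g(\beta)$ in case (iii): since $g_\pm^{-1}$ are only H\"older at $g(\beta)$ (because $g'(\beta)=0$), one has to verify that the glued $V$-orbit is a bona fide solution of $V-V''=g^{-1}(V)$ at the crossing, which should follow from energy conservation together with the finite crossing time guaranteed by $w_0\neq 0$, and that the reconstructed $U$ satisfies \eqref{GS} pointwise everywhere, including across $\pm x_0$. One also has to rule out, in each sub-case, any further solution obtained by attempting to attach different trajectories to $\mathcal{M}_0^-$; here the collapse of the parameter interval is decisive, since the only candidate $v_0$ is $g(\beta)$ and the nature of the contact of $\mathcal{M}_0^-$ with the line $v=g(\beta)$ (empty, tangential, or transverse) fully determines which of (i)--(iii) applies. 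Modulo this regularity and uniqueness check, everything else is a line-by-line adaptation of Lemmas~\ref{lem43}--\ref{lem45}.
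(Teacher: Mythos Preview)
Your proposal follows exactly the route the paper indicates: the paper's entire argument is the single sentence that when $\beta=\gamma$ one can reproduce the analysis conducted above, and you have spelled this out correctly, the key observation being that the gluing interval $[g(\gamma),g(\beta)]$ collapses to the singleton $\{g(\beta)\}$, which removes the free parameter and forces uniqueness in each sub-case. One remark on case~(iii): you write ``non-smooth points'' where the statement says ``jump discontinuity,'' and your wording is in fact the accurate one, since $\beta=\gamma$ forces $g_-^{-1}(g(\beta))=\beta=\gamma=g_+^{-1}(g(\beta))$, so the profile $U$ is continuous (with vertical tangent) at $\pm x_0$.
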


\begin{rmk}
Note that our Lemma~\ref{lem43},~\ref{lem44},~\ref{lem45} and~\ref{lembetaeqgam} complement and precise \cite[Proposition 3.2 and 3.3]{CR99} by providing an exact characterization for each case and an analytical formula for the point of discontinuity of discontinuous ground state solutions.
\end{rmk}

\subsection{Further properties of $x_0$}

In this subsection, we gather some results regarding $x_0$, the point of jump discontinuity of ground state solutions, when it exists.

\begin{lem}\label{lemx0}
Assume that $g$ satisfies Hypothesis~\ref{hypg} and that $\beta<\gamma$. Assume that  there exists a one parameter family of ground state solutions with two jump discontinuities at $\pm x_0$, indexed by $v_0\in\mathcal{D}$ for some bounded domain $\mathcal{D}\subset(0,g(\beta)]$. Then we have:
\begin{itemize}
\item[(i)] If $g(\gamma)<0$, then $x_0$ defined in \eqref{defx0} satisifies
\bqs
\underset{v_0\rightarrow0^+}{\lim}x_0 = 0.
\eqs
\item[(ii)] If $g(\gamma)>0$, then there exists $C_\gamma>0$ such that $x_0$ defined in \eqref{defx0} satisifies
\bqs
\underset{v_0\rightarrow g(\gamma)^+}{\lim}x_0 = C_\gamma.
\eqs
\item[(iii)] If $\mathcal{G}_-(g(\beta))< 0$ and $-\mathcal{G}_-(g(\beta)) \geq \mathcal{G}_+(1)-\mathcal{G}_+(g(\beta))$, then $x_0$ defined in \eqref{defx0} satisifies
\bqs
\underset{v_0\rightarrow v_c^-}{\lim}x_0 = +\infty.
\eqs
\item[(iv)] The map $x_0:\mathcal{D}\to \R^+$ is continuous.
\end{itemize}
\end{lem}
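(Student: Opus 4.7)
The plan is to analyze the time-map integral \eqref{defx0} asymptotically in each of the limits (i)--(iii) by Taylor expansion of $\mathcal{G}_\pm$ at the relevant critical points of the phase portrait, and to prove (iv) through a change of variable that regularizes the endpoint singularity at $v=v_*$.

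For (i), the hypothesis $g(\gamma)<0$ allows $u^*:=g_+^{-1}(0)\in(\gamma,1)$ to be defined, with $g(u^*)=0$ equivalent to $f(u^*)=d u^*$. Since $f'(0)<0$, we have $g'(0)>1$, and from $g_-^{-1}(v)\sim v/g'(0)$ as $v\to 0^+$ one computes $-\mathcal{G}_-(v_0)\sim\tfrac{g'(0)-1}{2g'(0)}v_0^2$, while $\mathcal{G}_+'(0)=f(u^*)/d=u^*>0$. Inserting these expansions into $\mathcal{G}_+(v_*)=\mathcal{G}_+(v_0)-\mathcal{G}_-(v_0)$ yields $v_*-v_0\sim Cv_0^2$ with $C:=\tfrac{g'(0)-1}{2u^*g'(0)}>0$. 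Rescaling $v=v_0+s(v_*-v_0)$ for $s\in[0,1]$ in \eqref{defx0}, the same expansions give integrand $\sim\bigl(v_0\sqrt{(g'(0)-1)/g'(0)}\bigr)^{-1}(1-s)^{-1/2}$ and $dv\sim Cv_0^2\,ds$, so $x_0\sim cv_0\to 0$ for an explicit constant $c>0$.

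For (ii), the limit $v_0\to g(\gamma)^+$ is regular: the defining equation has a well-defined limit $v_*^\gamma\in(g(\gamma),1)$, the very existence of the ground state family up to $v_0=g(\gamma)$ being equivalent to the non-pinning inequality $-\mathcal{G}_-(g(\gamma))<\mathcal{G}_+(1)-\mathcal{G}_+(g(\gamma))$. The pointwise limit of the integrand has only an integrable square-root singularity at $v=v_*^\gamma$, since $\mathcal{G}_+'(v_*^\gamma)=f(g_+^{-1}(v_*^\gamma))/d>0$ in the generic case $\gamma>a$; dominated convergence (after the $t^2$-substitution introduced for (iv) below) then yields $x_0\to C_\gamma:=\int_{g(\gamma)}^{v_*^\gamma}dv/\sqrt{2(\mathcal{G}_+(g(\gamma))-\mathcal{G}_-(g(\gamma))-\mathcal{G}_+(v))}>0$. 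For (iii), as $v_0\to v_c^-$ the upper endpoint $v_*(v_0)$ approaches $1$, which is an equilibrium of $V-V''=g_+^{-1}(V)$ since $g_+^{-1}(1)=1$. A direct computation gives $\mathcal{G}_+'(1)=f(1)/d=0$ and $\mathcal{G}_+''(1)=f'(1)/(d-f'(1))<0$ thanks to $f'(1)<0$; thus $\mathcal{G}_+$ attains a non-degenerate local maximum at $v=1$ with value $-\tfrac12$ and $\mathcal{G}_+(v)+\tfrac12\sim-A(v-1)^2$ with $A>0$. Fixing $v_0'\in(v_c,1)$, the contribution to \eqref{defx0} from $[v_0',v_*]$ is asymptotically $(2A)^{-1/2}\cosh^{-1}\bigl((1-v_0')/(1-v_*)\bigr)\to+\infty$ as $v_*\to 1^-$.

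For (iv), the implicit function theorem applied to $\mathcal{G}_+(v_*)-\mathcal{G}_+(v_0)+\mathcal{G}_-(v_0)=0$ yields continuity of $v_0\mapsto v_*(v_0)$ on $\mathcal{D}$, using $\mathcal{G}_+'(v_*)>0$. The substitution $v=v_*-(v_*-v_0)t^2$, $t\in[0,1]$, then transforms \eqref{defx0} into
\[
x_0(v_0)=2(v_*-v_0)\int_0^1\frac{t\,dt}{\sqrt{2\bigl(\mathcal{G}_+(v_0)-\mathcal{G}_-(v_0)-\mathcal{G}_+(v_*-(v_*-v_0)t^2)\bigr)}},
\]
whose integrand is continuous and bounded on $\mathcal{D}\times[0,1]$, the apparent singularity of the denominator at $t=0$ being cancelled by the factor $t$ in the numerator. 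Continuity of $x_0$ on $\mathcal{D}$ then follows from dominated convergence. The main technical obstacle throughout is the square-root endpoint singularity of the original integrand at $v=v_*$, which this $t^2$-substitution removes uniformly in $v_0$.
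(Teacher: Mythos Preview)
Your argument is correct, but it follows a somewhat different path from the paper's. The paper introduces a single Schaaf-type change of variable $\mathcal{G}_+(\Psi(t,v_0))-\mathcal{G}_+(v_0)=t^2/2$ (so that $\partial_t\Psi=t/\mathcal{G}_+'(\Psi)$), which, after a further trigonometric substitution, recasts $x_0(v_0)$ as $\int_0^{\pi/2}\partial_t\Psi\bigl(\sqrt{-2\mathcal{G}_-(v_0)}\cos\theta,v_0\bigr)\,d\theta$. All four items are then read off from this single representation: (i) by bounding $\partial_t\Psi$ uniformly and letting the amplitude $\sqrt{-2\mathcal{G}_-(v_0)}\to 0$; (ii) and (iv) by passing to the limit under the integral via continuity of $\Psi$; (iii) essentially as you do, by the quadratic vanishing of $\mathcal{G}_+(1)-\mathcal{G}_+(v)$. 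Your route instead treats each limit by a tailored local expansion and uses the simpler algebraic substitution $v=v_*-(v_*-v_0)t^2$ to desingularize the endpoint; this is more elementary (no implicit inverse $\Psi$ is needed) and in (i) actually yields the sharper asymptotic $x_0(v_0)\sim c\,v_0$ rather than the paper's $x_0(v_0)\le C_0\sqrt{-2\mathcal{G}_-(v_0)}$. The trade-off is that the paper's single substitution gives a more unified treatment, while yours requires a separate expansion at each endpoint. One minor caveat: in (ii) and (iv) both approaches implicitly use $\mathcal{G}_+'(v_*(v_0))>0$, i.e.\ $f(g_+^{-1}(v_*))>0$; your remark ``in the generic case $\gamma>a$'' flags this, and indeed the paper tacitly assumes the same non-degeneracy when asserting $-\Psi+g_+^{-1}(\Psi)\neq 0$.
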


\begin{proof}
Recalling the definition of $x_0$, and making explicit the dependence in $v_0$, we have
\bqs
x_0(v_0) = \int_{v_0}^{v_*(v_0)} \frac{\md v}{\sqrt{2\left(\mathcal{G}_+(v_0)-\mathcal{G}_-(v_0)-\mathcal{G}_+(v)\right)}}, \quad v_0\in\mathcal{D},
\eqs
where $v_*(v_0)$ is uniquely defined as $\mathcal{G}_+(v_*(v_0))=\mathcal{G}_+(v_0)-\mathcal{G}_-(v_0)$. First, we remark that the function $v\mapsto \mathcal{G}_+(v)-\mathcal{G}_+(v_0)$ is smooth, strictly increasing on $[v_0,v_*]$ and positive. Following \cite[Chapter 3]{schaaf}, we introduce the change of variable $\Psi(t,v_0)$ such that
\bqs
\mathcal{G}_+(\Psi(t,v_0))-\mathcal{G}_+(v_0)=\frac{t^2}{2}, 
\eqs
together with $v_0=\Psi(0,v_0)$ and $v_*(v_0)=\Psi(t_*(v_0), v_0)$ with $t_*(v_0)=\sqrt{-2\mathcal{G}_-(v_0)}$. The map $t\mapsto \Psi(t,v_0)$ is thus positive increasing on $[0,t_*(v_0)]$ and smooth. Using this change of variable, we obtain that
\bqs
x_0(v_0)=\int_0^{\sqrt{-2\mathcal{G}_-(v_0)}} \frac{\partial_t \Psi(t,v_0)}{\sqrt{-2\mathcal{G}_-(v_0)-t^2}}\md t = \int_0^{\frac{\pi}{2}} \partial_t \Psi\left(\sqrt{-2\mathcal{G}_-(v_0)}\cos(\theta),v_0\right) \md \theta.
\eqs
Note that $\partial_t \Psi(t,v_0)$ satisfies 
\bqs
\partial_t \Psi(t,v_0) \left(-\Psi(t,v_0)+g^{-1}_+(\Psi(t,v_0)) \right) = t, \quad t\in [0,\sqrt{-2\mathcal{G}_-(v_0)}].
\eqs
\begin{itemize}
\item[(i)] Since $g(\gamma)<0$, we have that $\gamma<g^{-1}_+(0)<1$, and there exist some constant $C_0>1$ (independent of $v_0$) and $\epsilon_0>0$ such that 
\bqs
0<\frac{1}{C_0}\leq \frac{1}{-\Psi(t,v_0)+g^{-1}_+(\Psi(t,v_0))} \leq C_0, \quad t\in [0,\sqrt{-2\mathcal{G}_-(v_0)}],
\eqs
for all $v_0\in[0,\epsilon_0]$. As a consequence, we have
\bqs
0<x_0(v_0)  \leq C_0 \sqrt{-2\mathcal{G}_-(v_0)} \rightarrow0, \text{ as } v_0\rightarrow0.
\eqs
\item[(ii)] Since $0<g(\gamma)<1$, we have that there exist some constant $C_0>1$ (independent of $v_0$) and $\epsilon_0>0$ such that 
\bqs
0<\frac{1}{C_0}\leq \frac{1}{-\Psi(t,v_0)+g^{-1}_+(\Psi(t,v_0))} \leq C_0, \quad t\in [0,\sqrt{-2\mathcal{G}_-(v_0)}],
\eqs
for all $v_0\in(g(\gamma),g(\gamma)+\epsilon_0]$. As a consequence, we can pass into the limit in the integral and obtain
\bqs
\underset{v_0\rightarrow g(\gamma)^+}{\lim}x_0(v_0)=\int_0^{\frac{\pi}{2}} \partial_t \Psi\left(\sqrt{-2\mathcal{G}_-(g(\gamma))}\cos(\theta),g(\gamma)\right) \md \theta := C_\gamma.
\eqs
We readily obtain that
\bqs
C_\gamma \geq \frac{\sqrt{-2\mathcal{G}_-(g(\gamma))}}{C_0}>0,
\eqs
since we are in the parameter regime where $\mathcal{G}_-(g(\gamma))<0$.
\item[(iii)] We recall that $v_c\in (0,g(\beta)]\cap[g(\gamma),1)$ is uniquely defined via $\mathcal{G}_+(v_c)-\mathcal{G}_-(v_c) = \mathcal{G}_+(1)$ and that $v_*(v_0)\rightarrow 1$ as $v_0\rightarrow v_c^-$. We also have that $\mathcal{G}_+'(1)=0$ and $\mathcal{G}_+''(1)<0$ so that
\bqs
\underset{v\to 1^-}{\lim}\frac{1-v}{\sqrt{2(\mathcal{G}_+(1)-\mathcal{G}_+(v))}} = \frac{1}{\sqrt{-\mathcal{G}_+''(1)}}\neq 0.
\eqs
As a consequence, we have that
\bqs
x_0(v_0) = \int_{v_0}^{v_*(v_0)} \frac{\md v}{\sqrt{2\left(\mathcal{G}_+(v_*(v_0))-\mathcal{G}_+(v)\right)}} \longrightarrow +\infty, \text{ as }v_0\rightarrow v_c^-.
\eqs
\item[(iv)] By the implicit function theorem, $\Psi$ is continuous in its second argument. Furthermore since for each $v_0\in \mathcal{D}$ and $t\in [0,\sqrt{-2\mathcal{G}_-(v_0)}]$ one has $-\Psi(t,v_0)+g^{-1}_+(\Psi(t,v_0))\neq 0$, we get that $v_0\mapsto \partial_t \Psi\left(\sqrt{-2\mathcal{G}_-(v_0)}\cos(\theta),v_0\right)$ is continuous uniformly in $\theta\in[0,\pi/2]$, which gives the result.
\end{itemize}
\end{proof}
%

\subsection{Application to the cubic nonlinearity}\label{subsecapplcubic}

In the specific case of the cubic nonlinearity $f_a(u)=u(1-u)(u-a)$ with $a\in(0,1/2)$, we can get explicit formula and obtain a complete picture which is summarized in Figure~\ref{fig:DiagGS}. First, we readily note that $\beta$ and $\gamma$ from Hypothesis~\ref{hypg} are well defined if and only if $\frac{1-a+a^2}{3}\geq d$, since in that case they are given explicitly as
\bqs
\beta = \frac{1+a-\sqrt{(1+a)^2-3(a+d)}}{3}, \quad \gamma = \frac{1+a+\sqrt{(1+a)^2-3(a+d)}}{3}.
\eqs
We have that $\beta=\gamma$ precisely when $d=\frac{1-a+a^2}{3}$, which corresponds to the blue curve in Figure~\ref{fig:DiagGS}. Furthermore,  $g'>0$ on $[0,1]$ whenever $d>\frac{1-a+a^2}{3}$ which corresponds to the region $\textbf{I}$. Below the curve $\beta=\gamma$, we distinguish three different regions which are delimited by the curves $\mathcal{G}_-(g(\beta))=0$ and $\mathcal{G}_-(g(\gamma))=0$ which are respectively the green and red curves in Figure~\ref{fig:DiagGS}. We also note that the functions $\mathcal{G}_\pm$ can be expressed as
\bqs
\mathcal{G}_\pm(v) = -\frac{v^2}{2}+\frac{3}{4d}g_\pm^{-1}(v)^4-\frac{2}{3d}(1+a)g_\pm^{-1}(v)^3+\frac{1}{2}\left(1+\frac{a}{d}\right)g_\pm^{-1}(v)^2.
\eqs
Region {\bf II} corresponds to the situation where $\beta<\gamma$ and $\mathcal{G}_-(g(\beta))<0$ where Lemma~\ref{lem43} applies. Region {\bf III} corresponds to the situation where $\beta<\gamma$ and $\mathcal{G}_-(g(\beta))>0$ with $\mathcal{G}_-(g(\beta))<0$ which is the second case (ii) of  Lemma~\ref{lem44} applies. The last region {\bf IV} where $\beta<\gamma$ and $\mathcal{G}_-(g(\beta))>0$ with $\mathcal{G}_-(g(\beta))>0$ corresponds to the first case (i) of  Lemma~\ref{lem44}. Let us finally note that Lemma~\ref{lembetaeqgam} holds on the curve $\beta=\gamma$ and the point where $\mathcal{G}_-(g(\beta))=0$ separates the different cases. All curves interest at unique point marked by the orange cross. Within region {\bf II}, the boundary of the pinning region (pink dashed curve), characterized by \eqref{boundaryPR}, rewrites as \cite{AFSS}:
\bqs
d=d_{\mathrm{pin}}(a)=\frac{1}{3}\left( 1-a+a^2-\sqrt{1-2a}\right), \quad 0<a<1/2.
\eqs

\begin{figure}[t!]
  \centering
  \includegraphics[width=.45\textwidth]{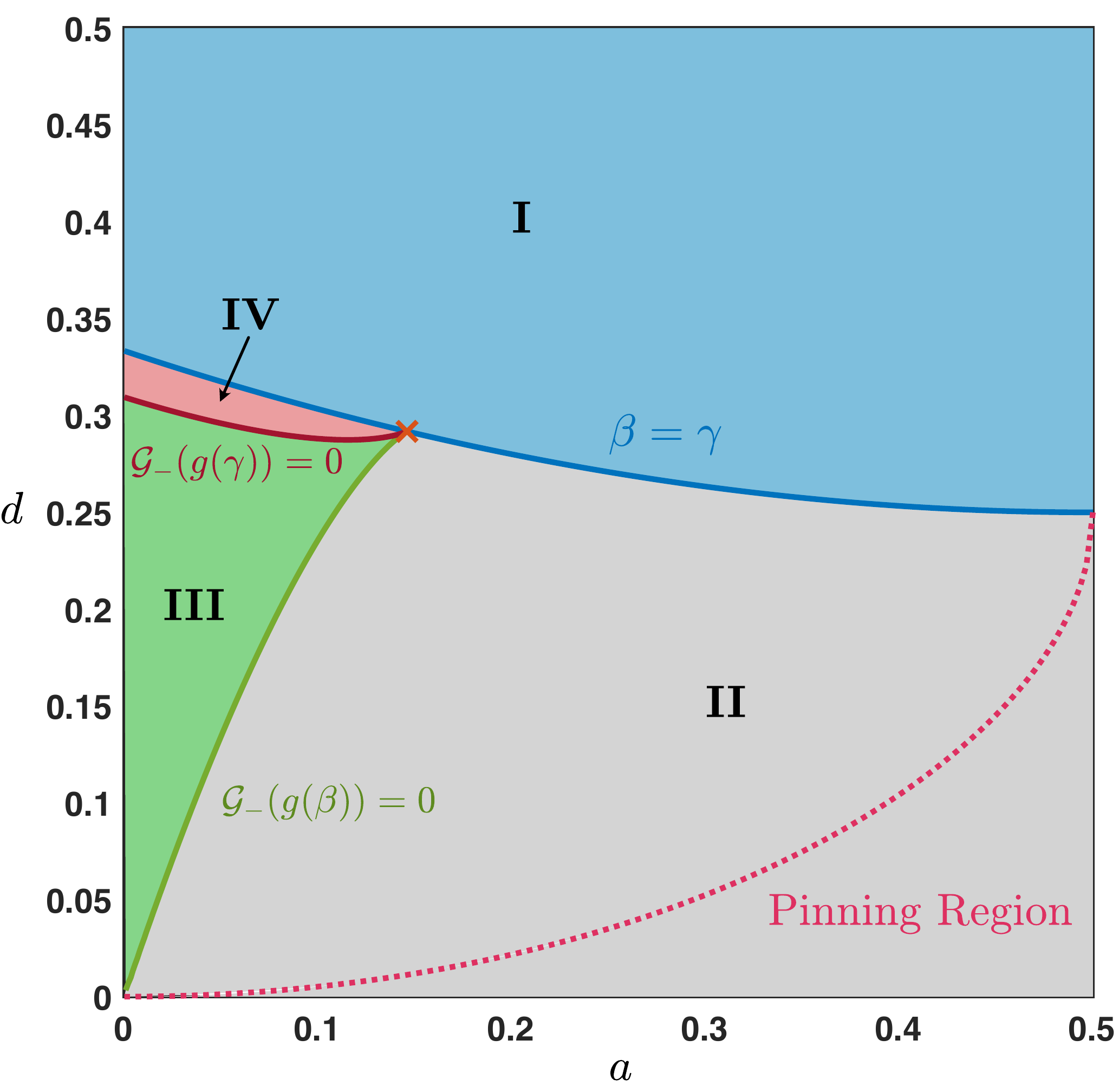}
  \caption{Regions of application of our lemmas in the case of the cubic nonlinearity $f_a(u)=u(1-u)(u-a)$ with $a\in(0,1/2)$ and $d>0$. The boundary between each regions are given by the blue curve, parametrized by $\beta=\gamma$, the dark red curve, parametrized by $\mathcal{G}_-(g(\gamma))=0$, and the green curve, parametrized by $\mathcal{G}_-(g(\beta))=0$. Note that they all intersect at a unique point marked by an orange cross. Lemma~\ref{lem41} applies in region {\bf I} (blue), while Lemma~\ref{lem43} holds true in region {\bf II} (grey). Cases (i) and (ii) of Lemma~\ref{lem44} correspond respectively to regions {\bf IV} (red) and {\bf III} (green). We also represented the boundary of the pinning region (pink dashed curve) given by $d=d_{\mathrm{pin}}(a)$.}
  \label{fig:DiagGS}
\end{figure}

\section{Regime of small diffusion}\label{secsmall}

In section, we investigate the regime of small diffusion and prove several results regarding the non extinction, non propagation and stagnation of the solutions of the Cauchy problem~\eqref{edp}. Our technique of proof relies on comparison principle and the construction of appropriate sub and super-solutions using discontinuous ground state solutions found in the previous section.

\subsection{Non extinction for small diffusion}\label{secnonext}

In this subsection, we prove that there is always persistence of the solution in the special case where $\K(x)=e^{-|x|}/2$ and when $g$ verifies Hypothesis~\ref{hypg}. First, we introduce some notations. By definition of $\gamma \in(0,1)$ one has $g'(\gamma)=0$ which reads equivalently as $d=f'(\gamma)>0$. On the other hand, since $f'(1)<0$, there exists $u_+\in(\gamma,1)$ such that $f'(u_+)=0$ and for all $u\in(u_+,1]$ one has $f'(u)<0$. Similarly, there exists $u_-\in(0,\beta)$ such that $f'(u_-)=0$ and for all $u\in[0,u_-)$ one has $f'(u)<0$.

\begin{prop}[Non extinction]\label{prop51}
Let $\K(x)=e^{-|x|}/2$ and assume that $g$ satisfies Hypothesis~\ref{hypg} and that $\beta<\gamma$ with $g(u_+)<0$. Then, there exists $\widetilde{\ell}_0>0$ such that for all $\ell \in(0,\widetilde{\ell}_0)$ the solution $u_\ell$ of \eqref{edp} satisfies
\bqs
U (x) \leq \underset{t\rightarrow+\infty}{\liminf} \, u_\ell(t,x)\ \text{ uniformly in } x\in\R\backslash\{\pm\ell\},
\eqs
where $U$ is a discontinuous ground state of \eqref{GS} with discontinuity points precisely at $\pm\ell$.
\end{prop}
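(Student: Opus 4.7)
My plan is to combine a stationary sub-solution provided by the hypothesis $g(u_+)<0$ with a monotone flow argument, and then to identify the asymptotic limit with the ground state $U$ furnished by Section~\ref{secGS}. First I use the fact that $u_+\in(\gamma,1)$ and $g$ is strictly increasing on $(\gamma,1]$, so that $g(u_+)<0$ immediately forces $g(\gamma)<0$; case (i) of Lemma~\ref{lemx0} then gives $x_0(v_0)\to 0$ as $v_0\to 0^+$, and combined with the continuity statement (iv) of the same lemma and the intermediate value theorem this produces $\widetilde{\ell}_0>0$ such that for every $\ell\in(0,\widetilde{\ell}_0)$ there exists $v_0(\ell)>0$ with $x_0(v_0(\ell))=\ell$. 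The appropriate variant among Lemmas~\ref{lem43}, \ref{lem44} and \ref{lem45} (according to the sign of $\mathcal{G}_-(g(\beta))$) then provides the desired discontinuous ground state $U$ with jumps exactly at $\pm\ell$.

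The second ingredient is the stationary sub-solution $w(x):=u_+\mathds{1}_{[-\ell,\ell]}(x)$. On $\{|x|>\ell\}$ one has $d(-w+\K\ast w)+f(0)=d(\K\ast w)\geq 0$, and on $\{|x|<\ell\}$ one uses $(\K\ast w)(x)\geq 0$ to obtain
\[
d(-u_+ + (\K\ast w)(x))+f(u_+)\;\geq\;-d\,u_+ + f(u_+)\;=\;-d\,g(u_+)\;>\;0,
\]
so that $w$ is a strict stationary sub-solution of \eqref{edp}. Since moreover $w\leq u_+<1=\mathds{1}_{[-\ell,\ell]}$, the ordering $w\leq u_\ell(0,\cdot)$ holds at $t=0$. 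Letting $v(t,\cdot)$ be the solution of \eqref{edp} starting from $w$, the comparison principle applied to $v(t+h,\cdot)-v(t,\cdot)\geq 0$ shows that $t\mapsto v(t,\cdot)$ is non-decreasing, so that, being bounded by $1$, it converges pointwise to some even, bounded, positive stationary solution $v_\infty$ of \eqref{GS}. Comparison with $u_\ell$ yields $u_\ell(t,\cdot)\geq v(t,\cdot)$ for every $t>0$, whence $\liminf_{t\to\infty}u_\ell(t,x)\geq v_\infty(x)$.

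The final step is to argue that $v_\infty$ is a ground state with jumps at $\pm\ell$. The key observation is that the PDE smooths only through the $\K\ast v$ term (continuous in $x$ since $\K\in L^1$), while the other terms preserve pointwise discontinuities, so $v(t,\cdot)$ stays continuous on $\R\setminus\{\pm\ell\}$ and so does its pointwise limit $v_\infty$. Hence $v_\infty$ falls into the exhaustive classification of Section~\ref{secGS}: it is either $\equiv 1$, a smooth ground state, or a discontinuous ground state with exactly two jumps. The smooth ground state $U_s$ (when present) satisfies $U_s\leq g_-^{-1}(v_m)<\beta<\gamma<u_+$ and is therefore incompatible with $v_\infty\geq w=u_+$ on $[-\ell,\ell]$; this case is ruled out. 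If $v_\infty\equiv 1$ the conclusion is trivial since $1\geq U$ for the $U$ of Step~1. Otherwise $v_\infty$ is a discontinuous ground state whose jump set is inherited from that of $w$, so it sits at $\pm\ell$; we then take $U:=v_\infty$. The uniform-in-$x$ bound on $\R\setminus\{\pm\ell\}$ follows from Dini's theorem on compacts of $\R\setminus\{\pm\ell\}$ together with the uniform exponential decay of $U$ at infinity. I expect the main obstacle to lie precisely in justifying that the jump $J(t):=v(t,\ell^-)-v(t,\ell^+)$, which satisfies an ODE of the form $J'(t)=(f'(\xi(t))-d)J(t)$ with $J(0)=u_+>0$ derived from Duhamel's formula, converges to a strictly positive limit rather than shrinking to $0$, thereby guaranteeing that $v_\infty$ remains inside the discontinuous family and not on some degenerate continuous branch.
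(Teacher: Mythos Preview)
Your approach diverges from the paper's after Step~1 (which matches). The paper does not iterate from $u_+\mathds{1}_{[-\ell,\ell]}$; instead, having selected the ground state $U$ with jumps at $\pm\ell$ exactly as you do, it builds the explicit time-dependent sub-solution $\underline u(t,x)=U(x)-\delta e^{-\alpha t}$ and verifies $\mathscr N(\underline u)<0$ directly. This reduces to checking $f'(\xi(t,x))<-c<0$ for every $\xi$ between $U(x)-\delta$ and $U(x)$, and here the restriction on $\ell$ enters: requiring $v_0\in(0,g(\eta/2))$ forces $U(x)\in(0,\eta/2)$ for $|x|>\ell$ and $U(x)\in(u_\star,1)$ for $|x|<\ell$, both ranges lying where $f'<0$. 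After checking $U-\delta\leq \mathds{1}_{[-\ell,\ell]}$, the comparison principle yields $U(x)-\delta e^{-\alpha t}\leq u_\ell(t,x)$ and the uniformity in $x$ is automatic. No limit ever needs to be identified.

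Your route has a genuine gap precisely where you flag it, and it is not minor. First, passing from pointwise monotone convergence of $v(t,\cdot)$ to the statement that $v_\infty$ solves the stationary equation is asserted, not proved. Second, and more seriously, even granting that, nothing confines $v_\infty$ to the classification of Section~\ref{secGS}, which covers only solutions with $v_\infty(\pm\infty)=0$: for $|x|>\ell$ the data start at $0$ but are pushed upward by $d(\K*v)(x)>0$, and you do not rule out $v_\infty$ settling on a nonzero background (e.g.\ the constant $a$, or a pair of pinned interfaces) that lies outside the ground-state lemmas. Third, the jump ODE $J'=(f'(\xi)-d)J=-d\,g'(\xi)J$ changes sign with $\xi$ (positive on $(\beta,\gamma)$, negative outside), and you have no control on where $\xi(t)$ lives as $v(t,\ell^\pm)$ evolves; so persistence of the discontinuity is genuinely open, not a routine detail. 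Completing this identification would require an $\omega$-limit or Lyapunov argument tailored to the nonlocal problem, whereas the paper's direct sub-solution sidesteps the issue entirely.
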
 

\begin{proof}
We first remark that since $u_+\in(\gamma,1)$ is such that $g(u_+)<0$, one necessarily has that $g(\gamma)<0$. As a consequence, using Lemma~\ref{lem43} and Lemma~\ref{lem44}, there exists a one parameter family of discontinuous ground state solutions $U$ of \eqref{GS} indexed by $v_0\in \mathcal{D}$  for some bounded domain $\mathcal{D}\subset (0,g(\beta)]$ with two jump discontinuities at $\pm \, x_0(v_0)$ which verifies, by Lemma~\ref{lemx0}, 
\bqs
\underset{v_0\rightarrow0^+}{\lim}x_0(v_0) = 0\,.
\eqs
Since $g(1)=1$ and $g(u_+)<0$, one gets the existence of $u_\star\in(u_+,1)$ such that $g(u_\star)=0$ by continuity of $g$. As a consequence, using the fact that $f'(0)<0$ and the monotonicity properties of $f$, there exist some small $\eta \in (0,\min((u_\star-u_+)/2,u_-/2))$ and some constant $c>0$ such that $f'(u)<-c<0$ for each $u\in(-\eta,u_-/2)\cup(u_\star-\eta,1)$. We refer to Figure~\ref{fig:figuregext} for an illustration of the relative position between all the constants.

We then define
\bqs
0<\widetilde{\ell}_0:=\underset{v_0\in(0,g(\eta/2)) \, \cap \, \mathcal{D}}{\sup}\, x_0(v_0)\,.
\eqs
For each $\ell\in (0,\widetilde{\ell}_0)$ there exist some $v_0 \in(0,g(\eta/2))\, \cap\, \mathcal{D}$ such that $\ell=x_0(v_0)$ and a discontinuous ground state $U$ of \eqref{GS} with discontinuity points precisely at $\pm\ell$. We are going to prove that 
\bqs
\underline{u}(t,x)=U(x)-\delta e^{-\alpha t},
\eqs
is a sub-solution for the Cauchy problem \eqref{edp} for some well-chosen constants $\delta>0$ and $\alpha>0$. Recalling the definition of $\mathscr{N}$ in \eqref{eqNcal}, we first compute
\begin{align*}
 \mathscr{N}( \underline{u}(t,x))&=\delta \alpha e^{-\alpha t}+ f(U(x))-f(U(x)-\delta e^{-\alpha t})\\
 &=\delta  e^{-\alpha t}\left(\alpha+ f'(\xi(t,x))\right),
 \end{align*}
 for some $\xi(t,x)\in(U(x)-\delta e^{-\alpha t},U(x))$. We now want to ensure that $f'(\xi(t,x))<0$ for each $t>0$ and $x\in\R$. We let $\delta \in (\eta/2,\eta)$. For $|x|>\ell$, we have that $\xi(t,x)\in(U(x)-\delta e^{-\alpha t},U(x)) \subset(-\delta,g_-^{-1}(v_0))) \subset(-\eta,u_-/2)$, since for each $v_0\in(0,g(\eta/2))$ one has $0<g_-^{-1}(v_0)<\eta/2<u_-/4<u_-/2$. As a consequence,  we get that $f'(\xi(t,x))<-c<0$ for $|x|>\ell$. On the other hand, for $|x| \leq \ell$, we have $\xi(t,x)\in(U(x)-\delta e^{-\alpha t},U(x)) \subset(g_+^{-1}(v_0)-\delta,1)$. But for each $v_0\in(0,g(\eta/2))$, one has that $g_+^{-1}(v_0)>u_\star$ and so $g_+^{-1}(v_0)-\delta >u_\star-\eta$ since $\delta<\eta$. As a consequence, we get that  $f'(\xi(t,x))<-c<0$ for $|x| \leq \ell$. We then set $\alpha=c/2$ and we have proved that
 \bqs
  \mathscr{N}( \underline{u}(t,x)) < 0,
 \eqs
 for each $t>0$ and $x\in\R$. It only remains to check that $U(x)-\delta \leq \mathds{1}_{[-\ell,\ell]}(x)$ for all $x\in\R$. The inequality is trivially satisfied for any $|x|\leq \ell$. For $|x|>\ell$, we have that $U(x)-\delta \leq g_-^{-1}(v_0)-\delta < \eta/2 -\delta<0$ since $\delta \in (\eta/2,\eta)$. Using the comparison principle, we get that
 \bqs
 U(x)-\delta e^{-\alpha t} \leq u_\ell(t,x),
 \eqs
 for each $t>0$ and $x\in\R$, and the conclusion of the proposition follows.
\end{proof}

\begin{figure}[t!]
  \centering
  \includegraphics[width=.5\textwidth]{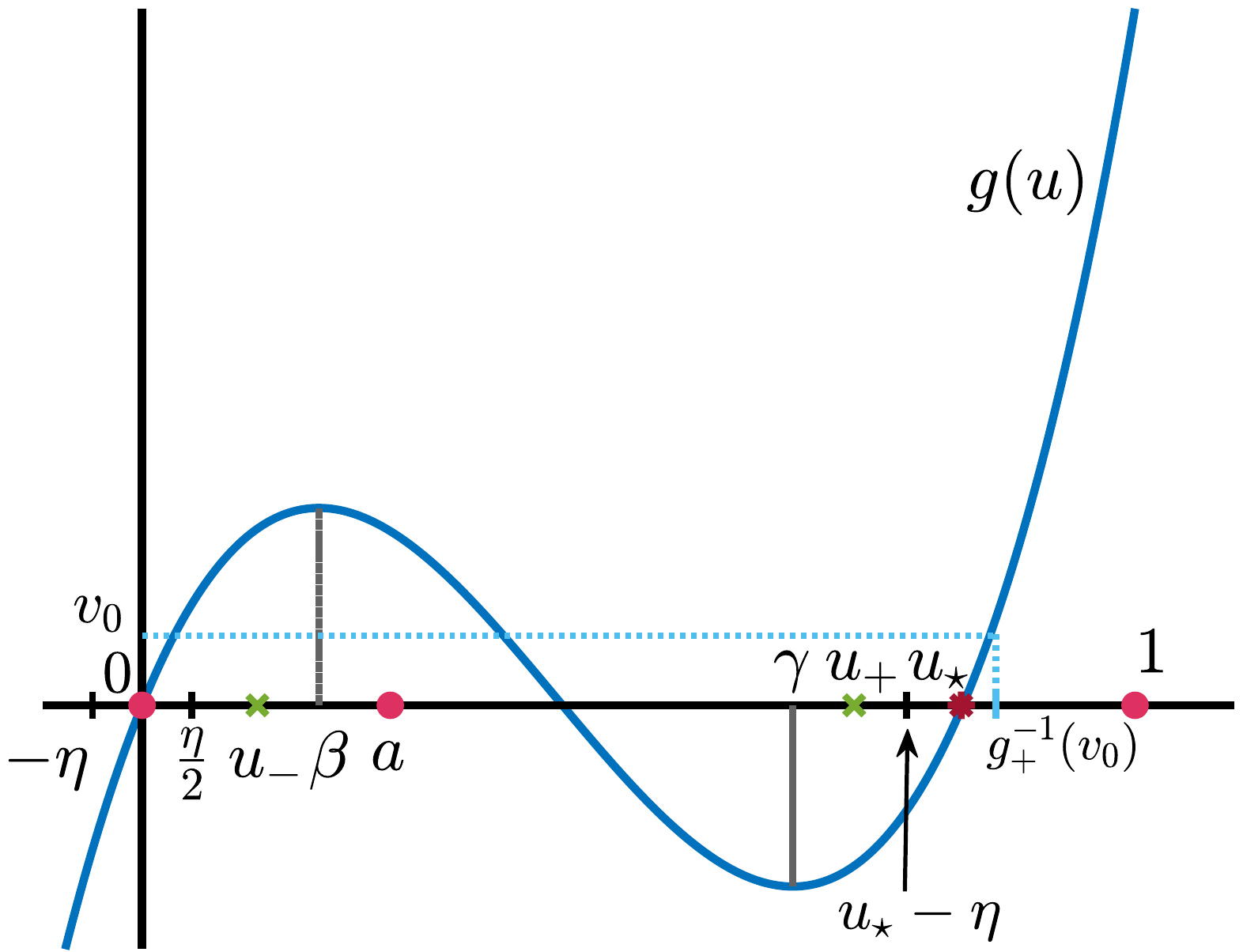}
  \caption{Illustration of a function $g$ satisfying Hypothesis~\ref{hypg} with $\beta<\gamma$ and $g(u_+)<0$ such that Proposition~\ref{prop51} applies. We refer to the proof of Proposition~\ref{prop51} for a precise definition of $\eta>0$. }
  \label{fig:figuregext}
\end{figure}

The above proposition shows that if $\ell>0$ is small enough, then the solution $u_\ell$ of the Cauchy problem is always bounded below by a ground state solution with discontinuity points which are exactly at $\pm \ell$. In fact, if one relaxes this latter condition, we can actually prove that for all $\ell>0$, the solution $u_\ell$ of the Cauchy problem is always bounded below by a ground state solution with discontinuity points which are now at $\pm x_0$ with $x_0 \leq \ell$.

\begin{prop}[Non extinction]\label{prop52}
Let $\K(x)=e^{-|x|}/2$ and assume that $g$ satisfies Hypothesis~\ref{hypg} and that $\beta<\gamma$ with $g(u_+)<0$. Then, for all $\ell>0$ the solution $u_\ell$ of \eqref{edp} satisfies
\bqs
U (x) \leq \underset{t\rightarrow+\infty}{\liminf} \, u_\ell(t,x)\ \text{ uniformly in } x\in\R\backslash\{\pm x_0\},
\eqs
where $U$ is a discontinuous ground state of \eqref{GS} with discontinuity points at $\pm x_0$ with $0<x_0\leq \ell$.
\end{prop}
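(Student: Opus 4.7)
The proposal is to adapt the proof of Proposition~\ref{prop51} essentially verbatim, with the only modification being the choice of $v_0$ and a slightly easier verification of the initial sub-solution inequality. The key observation is that in the proof of Proposition~\ref{prop51}, the constraint that $x_0(v_0) = \ell$ was only needed to ensure $\underline{u}(0,x) = U(x) - \delta \leq \mathds{1}_{[-\ell,\ell]}(x)$; relaxing this to $x_0(v_0) \leq \ell$ strengthens rather than weakens that pointwise inequality, so for arbitrary $\ell > 0$ we just have to make sure such a $v_0$ exists.

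The plan is as follows. First, keep the same $\eta \in (0, \min((u_\star - u_+)/2, u_-/2))$ and constant $c > 0$ from the proof of Proposition~\ref{prop51}, together with $\delta \in (\eta/2, \eta)$ and $\alpha = c/2$. By Lemma~\ref{lemx0}(i), since $g(\gamma) < 0$ (which follows from $g(u_+) < 0$ and $u_+ > \gamma$), we have $x_0(v_0) \to 0$ as $v_0 \to 0^+$ along $\mathcal{D}$. Therefore, for any given $\ell > 0$, we can fix some $v_0 \in (0, g(\eta/2)) \cap \mathcal{D}$ small enough that $x_0(v_0) \leq \ell$; this choice of $v_0$ determines a discontinuous ground state $U$ of \eqref{GS} with jumps at $\pm x_0$, $0 < x_0 \leq \ell$.

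Second, consider the candidate sub-solution $\underline{u}(t,x) = U(x) - \delta e^{-\alpha t}$. The computation of $\mathscr{N}(\underline{u}(t,x))$ is identical to that in the proof of Proposition~\ref{prop51}: since $v_0 \in (0, g(\eta/2))$, for $|x| > x_0$ we still have $\xi(t,x) \in (-\eta, u_-/2)$, and for $|x| \leq x_0$ we still have $\xi(t,x) \in (u_\star - \eta, 1)$, so $f'(\xi(t,x)) < -c < 0$ and $\mathscr{N}(\underline{u}(t,x)) < 0$ for all $t > 0$ and $x \in \R$.

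Third, the initial inequality $\underline{u}(0,x) = U(x) - \delta \leq \mathds{1}_{[-\ell,\ell]}(x)$ needs to be verified on three ranges instead of two. For $|x| \leq x_0$, $U(x) - \delta < 1 = \mathds{1}_{[-\ell,\ell]}(x)$ trivially. For $x_0 < |x| \leq \ell$, we have $U(x) = g_-^{-1}(V_-(x)) \leq g_-^{-1}(v_0) < \eta/2$, hence $U(x) - \delta < \eta/2 - \delta < 0 \leq \mathds{1}_{[-\ell,\ell]}(x)$. For $|x| > \ell \geq x_0$, the same estimate $U(x) - \delta < \eta/2 - \delta < 0 = \mathds{1}_{[-\ell,\ell]}(x)$ applies since $V_-$ is defined and bounded by $v_0$ on $|x| \geq x_0$. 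The comparison principle then yields $U(x) - \delta e^{-\alpha t} \leq u_\ell(t,x)$ for all $t > 0$ and $x \in \R$, and the conclusion follows upon passing to the $\liminf$ as $t \to +\infty$.

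The main subtlety—such as it is—lies in invoking Lemma~\ref{lemx0}(i) to guarantee that some $v_0$ with $x_0(v_0) \leq \ell$ exists within the admissible range $(0, g(\eta/2)) \cap \mathcal{D}$ that also forces the correct sign of $f'$ on both sides of the jump; but this is immediate from the fact that $x_0(v_0) \to 0^+$ as $v_0 \to 0^+$, so no nontrivial analysis is required beyond Proposition~\ref{prop51} and Lemma~\ref{lemx0}.
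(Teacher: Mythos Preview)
Your proposal is correct and follows essentially the same approach as the paper: both proofs reuse the sub-solution $\underline{u}(t,x)=U(x)-\delta e^{-\alpha t}$ from Proposition~\ref{prop51} and invoke $x_0(v_0)\to 0$ as $v_0\to 0^+$ (Lemma~\ref{lemx0}(i)) together with $g_-^{-1}(v_0)\to 0$ to select $v_0$ small enough that $x_0(v_0)\leq \ell$ and the initial ordering holds. The only cosmetic difference is that the paper takes $\delta=\eta$ directly rather than $\delta\in(\eta/2,\eta)$, and is terser about the three-range verification of $U(x)-\delta\leq \mathds{1}_{[-\ell,\ell]}(x)$ that you spell out.
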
 

\begin{proof}
From our assumption, we know that there exist some small $\eta \in (0,\min((u_\star-u_+)/2,u_-/2))$ and some constant $c>0$ such that $f'(u) \leq -c<0$ for each $u\in[-\eta,u_-/2]\cup[u_\star-\eta,1]$. Finally, there exists some $\epsilon_0>0$ such that for each $v_0\in(0,\epsilon_0)$, one has that $g_-^{-1}(v_0)<u_-/2$. Now, for each $v_0\in(0,\epsilon_0)$, we denote $U_{v_0}$ the discontinuous ground state solution with discontinuity points precisely at $\pm x_0(v_0)$, and we let
\bqs
\underline{u}(t,x)=U_{v_0}(x)-\eta e^{-\frac{ct}{2} }.
\eqs
For $|x|>x_0(v_0)$, we have that $(U_{v_0}(x)-\eta e^{-\frac{ct}{2}},U_{v_0}(x)) \subset [-\eta,g_-^{-1}(v_0))] \subset [-\eta,u_-/2]$, while for $|x| \leq x_0(v_0)$, we have that $(U_{v_0}(x)-\eta e^{-\frac{ct}{2}},U_{v_0}(x)) \subset [g_+^{-1}(v_0)-\eta,1] \subset [u_\star-\eta,1]$. As a consequence, for any $t>0$ and $x\in\R$, one gets that $\mathscr{N}( \underline{u}(t,x)) < 0$.

We now let $\ell>0$. Since both $x_0(v_0)\rightarrow0$ as as $v_0\rightarrow 0$ and  $g_-^{-1}(v_0)\rightarrow 0$ as $v_0\rightarrow 0$, one can find some $\epsilon\in(0,\epsilon_0)$ such that for all $v_0\in(0,\epsilon)$ one has $g_-^{-1}(v_0)<\eta/2$ and $0<x_0(v_0)\leq \ell$. As a consequence, we have that $U_{v_0}(x)-\eta \leq \mathds{1}_{[-\ell,\ell]}(x)$ for all $x\in\R$ for each $v_0\in(0,\epsilon)$. Then, the comparison principle gives that 
\bqs
 U_{v_0}(x)-\eta e^{-\frac{ct}{2}} \leq u_\ell(t,x),
 \eqs
 for each $t>0$ and $x\in\R$, and the proof is complete.

\end{proof}

\subsection{Non propagation in the pinning region}\label{secnonprop}

In this subsection, we prove that there is non propagation of the solution in the special case where $\K(x)=e^{-|x|}/2$ and when $g$ verifies Hypothesis~\ref{hypg}. 

\begin{prop}[Non propagation]\label{prop53}
Let $\K(x)=e^{-|x|}/2$ and assume that $g$ satisfies Hypothesis~\ref{hypg} and that $\beta<\gamma$ with $\mathcal{G}_-(g(\beta))<0$ and $-\mathcal{G}_-(g(\beta))\leq \mathcal{G}_+(1)-\mathcal{G}_+(g(\beta))$. Let $v_c\in (0,g(\beta)]\cap [g(\gamma),1)$ such that $\mathcal{G}_+(v_c)-\mathcal{G}_-(v_c)=\mathcal{G}_+(1)$ and assume that $g(u_+)<v_c<g(u_-)$. Then, for all $\ell > 0$ the solution $u_\ell$ of \eqref{edp} satisfies
\bqs
 \underset{t\rightarrow+\infty}{\limsup} \, u_\ell(t,x) \leq U(x) \, \text{ uniformly in } x\in\R\backslash\{\pm x_0\},
\eqs
for some discontinuous ground state  $U$ of \eqref{GS} with discontinuity points precisely at $\pm x_0$ with $x_0 \geq \ell$.
\end{prop}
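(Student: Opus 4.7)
The plan is to dualize the sub-solution construction of Proposition~\ref{prop52}: in the current regime we build a super-solution of the form
\bqs
\overline{u}(t,x):=U_{v_0}(x)+\delta e^{-\alpha t},
\eqs
with $U_{v_0}$ a discontinuous ground state from the family provided by Lemma~\ref{lem43}, indexed by $v_0\in[g(\gamma),v_c)$ with discontinuities at $\pm x_0(v_0)$, and apply the comparison principle.

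Using $\mathscr{N}(U_{v_0})=0$ together with $\K*1=1$ and a first-order Taylor expansion, I compute
\bqs
\mathscr{N}(\overline{u}(t,x))=-\delta e^{-\alpha t}\bigl(\alpha+f'(\xi(t,x))\bigr),
\eqs
for some $\xi(t,x)\in(U_{v_0}(x),U_{v_0}(x)+\delta e^{-\alpha t})$, so the super-solution condition $\mathscr{N}(\overline{u})\geq 0$ reduces to $f'(\xi)\leq-\alpha$ on the relevant range. The hypotheses $g(u_+)<v_c<g(u_-)$ provide exactly the margins needed: by monotonicity of $g_{\pm}$ and continuity, there exists $\epsilon>0$ such that for every $v_0\in(v_c-\epsilon,v_c)$ one has $g_-^{-1}(v_0)<u_-$ and $g_+^{-1}(v_0)>u_+$ with uniform gaps. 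Consequently $U_{v_0}(x)\in[0,g_-^{-1}(v_0)]\subset[0,u_-)$ for $|x|>x_0(v_0)$, and $U_{v_0}(x)\in[g_+^{-1}(v_0),1]\subset(u_+,1]$ for $|x|\leq x_0(v_0)$. Choosing $\delta>0$ small enough that the $\delta$-enlargements of these ranges remain inside the negativity regions of $f'$ (using $f'(u_{\pm})=0$ together with $f'(0),f'(1)<0$ and smoothness of $f$), I obtain some $c>0$ with $f'(\xi)\leq -c$ throughout, and then set $\alpha:=c/2$.

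The delicate step is the initial comparison $\overline{u}(0,x)\geq \mathds{1}_{[-\ell,\ell]}(x)$. Outside $[-\ell,\ell]$ it is trivial, and by Lemma~\ref{lemx0}(iii), $x_0(v_0)\to+\infty$ as $v_0\to v_c^-$, so I may assume $x_0(v_0)\geq\ell$. Inside $[-\ell,\ell]$ I then need $U_{v_0}(x)\geq 1-\delta$ uniformly. I will establish this by a phase-plane argument: the level set $\mathcal{M}_{v_0}^+$ converges to $\mathcal{W}_1^+$ as $v_0\to v_c^-$, and the corresponding planar system $V''=V-g_+^{-1}(V)$ has a hyperbolic saddle at $V=1$ (since $f'(1)<0$ gives $g'(1)=1-f'(1)/d>1$, hence a positive linearized eigenvalue). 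Trajectories on $\mathcal{M}_{v_0}^+$, being $C^0$-close to the saddle's homoclinic loop $\mathcal{W}_1^+$, linger near $(1,0)$ for arbitrarily long time, so $V_+(x)\to 1$ uniformly on any compact set, in particular on $[-\ell,\ell]$. Since $g_+^{-1}(1)=1$ and $g_+^{-1}$ is continuous, this yields $U_{v_0}(x)\geq 1-\delta$ on $[-\ell,\ell]$ for $v_0$ sufficiently close to $v_c$.

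Once both $\overline{u}(0,\cdot)\geq \mathds{1}_{[-\ell,\ell]}$ and $\mathscr{N}(\overline{u})\geq 0$ are in hand, the comparison principle gives $u_\ell(t,x)\leq U_{v_0}(x)+\delta e^{-\alpha t}$ for all $t>0$ and $x\in\R$, and taking $\limsup_{t\to+\infty}$ yields the claim with $U:=U_{v_0}$ and $x_0:=x_0(v_0)\geq \ell$. I expect the main obstacle to be the uniform convergence $V_+(x)\to 1$ on $[-\ell,\ell]$ as $v_0\to v_c^-$; this can either be handled by qualitative ODE continuous dependence near the saddle's homoclinic loop, or quantified via the same integral representation already exploited in the proof of Lemma~\ref{lemx0}.
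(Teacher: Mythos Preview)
Your proposal is correct and follows essentially the same route as the paper: the super-solution $\overline{u}(t,x)=U_{v_0}(x)+\delta e^{-\alpha t}$, the reduction of $\mathscr{N}(\overline{u})\geq 0$ to a sign condition on $f'$ via the mean value theorem, the use of $g(u_+)<v_c<g(u_-)$ to trap $\xi(t,x)$ inside the intervals where $f'<0$, and the appeal to Lemma~\ref{lemx0}(iii) together with $U_{v_0}\to 1$ on compacts to get the initial ordering. The paper simply asserts the local uniform convergence $U_{v_0}\to 1$ as $v_0\to v_c^-$, whereas you sketch the underlying phase-plane mechanism (proximity of $\mathcal{M}_{v_0}^+$ to the saddle loop $\mathcal{W}_1^+$); this is a welcome elaboration rather than a different argument.
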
 

\begin{proof}
Using Lemma~\ref{lem43}, we know that there exists a one parameter family of discontinuous ground state solutions $U_{v_0}$ of \eqref{GS} with discontinuity points precisely at $\pm x_0(v_0)$ for each $v_0\in(\max(g(\gamma),0),v_c)$. Here, we make explicit the dependence of $U$ with respect to $v_0$.  Furthermore Lemma~\ref{lemx0}(iii) ensures that the corresponding discontinuity points verify
\bqs
\underset{v_0\rightarrow v_c^-}{\lim}x_0(v_0) = +\infty.
\eqs
Since we assume that $v_c<g(u_-)$ and $f'(1)<0$ by hypothesis, there exists some $\delta>0$ such that $g_-^{-1}(v_c)+\delta <u_-$ and $f'(1+\delta)<0$. Now since, $g(u_+)<v_c$, there exists some $\eta>0$ such that $\eta+ u_+<g_+^{-1}(v_c)$ and $\epsilon_\eta>0$ such that for all $v_0\in(v_c-\epsilon_\eta,v_c)$ one has $\eta+ u_+<g_+^{-1}(v_0)$. For each $v_0\in(v_c-\epsilon_\eta,v_c)$, we let 
\bqs
\overline{u}(t,x)=U_{v_0}(x)+\delta e^{-\alpha t},
\eqs
where $U_{v_0}$ is a discontinuous ground state of \eqref{GS} with discontinuity points precisely at $\pm x_0(v_0)$ for some well-chosen $\alpha>0$ and $\delta>0$ defined above. First, we compute
\begin{align*}
 \mathscr{N}( \overline{u}(t,x))&=-\delta \alpha e^{-\alpha t}+ f(U_{v_0}(x))-f(U_{v_0}(x)+\delta e^{-\alpha t})\\
 &=-\delta  e^{-\alpha t}\left(\alpha+ f'(\xi(t,x))\right),
 \end{align*}
 for some $\xi(t,x)\in(U_{v_0}(x),U_{v_0}(x)+\delta e^{-\alpha t})$. For any $|x|\geq x_0(v_0)$, we have $\xi(t,x)\in (0, g_-^{-1}(v_0)+\delta) \subset (0, g_-^{-1}(v_c)+\delta)$ since $v_0<v_c$ and $g_-^{-1}$ is increasing on $[0,\beta)$. As a consequence, one has that 
 \bqs
 f'(\xi(t,x)) \leq \underset{u\in  [0, g_-^{-1}(v_c)+\delta]}{\sup} f'(u):= -m_1<0, \quad |x|\geq x_0(v_0).
 \eqs
 On the other hand, when $|x| < x_0(v_0)$, we have $\xi(t,x)\in (g_+^{-1}(v_0),1+\delta)\subset (\eta+u_+,1+\delta)$ since $v_0\in(v_c-\epsilon_\eta,v_c)$. As a consequence, one has that 
 \bqs
 f'(\xi(t,x)) \leq \underset{u\in  [\eta+u_+, 1+\delta]}{\sup} f'(u) :=- m_2<0, \quad |x|< x_0(v_0).
 \eqs
And we then set $\alpha= \frac{1}{2} \min(m_1,m_2)>0$ such that for all $t>0$ and $x\in\R$, one has
\bqs
 \mathscr{N}( \overline{u}(t,x))>0.
\eqs
To conclude the proof, we let $\ell>0$. Since we have that $U_{v_0}\rightarrow1$ as $v_0\rightarrow v_c^{-}$ on any compact set, there exists $\epsilon \in (0, \epsilon_\eta)$ such that for all $v_0\in(v_c-\epsilon ,v_c)$ one has
\bqs
1\leq U_{v_0}(x)+\delta, \quad x\in[-\ell,\ell],
\eqs
such that for all $x\in\R$ one gets $u_\ell(t=0,x)=\mathds{1}_{[-\ell,\ell]}(x)\leq U_{v_0}(x)+\delta$, and the comparison principle gives that
\bqs
u_\ell(t,x)\leq U_{v_0}(x)+\delta e^{- \alpha t},
\eqs
for each $v_0\in(v_c-\epsilon ,v_c)$. The proof is thus complete.
\end{proof}

\subsection{Towards stagnation for small diffusion}

We can combine Proposition~\ref{prop52} and Proposition~\ref{prop53} to obtain the following corollary which essentially says that for small enough diffusion and parameters within the pinning region, solutions of the Cauchy problem can neither go extinct nor propagate and are bounded above and below by two different ground state solutions.

\begin{cor}[Stagnation]\label{cor}
Let $\K(x)=e^{-|x|}/2$ and assume that $g$ satisfies Hypothesis~\ref{hypg} and that $\beta<\gamma$ with $\mathcal{G}_-(g(\beta))<0$ and $-\mathcal{G}_-(g(\beta))\leq \mathcal{G}_+(1)-\mathcal{G}_+(g(\beta))$. Let $v_c\in (0,g(\beta)]\cap [g(\gamma),1)$ such that $\mathcal{G}_+(v_c)-\mathcal{G}_-(v_c)=\mathcal{G}_+(1)$ and assume that $g(u_+)<0<v_c<g(u_-)$. Then, for all $\ell > 0$ the solution $u_\ell$ of \eqref{edp} satisfies
\bqs
\underline{U}(x) \leq \underset{t\rightarrow+\infty}{\liminf} \, u_\ell(t,x) \leq \underset{t\rightarrow+\infty}{\limsup} \, u_\ell(t,x) \leq \overline{U}(x) \, \text{ uniformly in } x\in\R\backslash\{\pm \underline{x}_0,\pm \overline{x}_0\},
\eqs
for some discontinuous ground states  $\underline{U}$ and $\overline{U}$ of \eqref{GS} with discontinuity points precisely at $\pm \underline{x}_0$ and $\pm \overline{x}_0$ respectively with $0<\underline{x_0} \leq \ell \leq \overline{x_0}$.
\end{cor}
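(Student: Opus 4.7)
The plan is to combine Proposition~\ref{prop52} and Proposition~\ref{prop53} directly, since the corollary's hypotheses are precisely engineered so that both results apply simultaneously. First I would verify that the hypotheses of each proposition hold. Proposition~\ref{prop52} requires $\K(x)=e^{-|x|}/2$, $g$ satisfying Hypothesis~\ref{hypg} with $\beta<\gamma$ and $g(u_+)<0$; all of these are immediate from the corollary's assumptions, since in particular $g(u_+)<0$ is part of the assumed chain $g(u_+)<0<v_c<g(u_-)$. Proposition~\ref{prop53} requires the additional conditions $\mathcal{G}_-(g(\beta))<0$, $-\mathcal{G}_-(g(\beta))\leq \mathcal{G}_+(1)-\mathcal{G}_+(g(\beta))$, and $g(u_+)<v_c<g(u_-)$; the first two are assumed verbatim and the last follows by transitivity from $g(u_+)<0<v_c<g(u_-)$.

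Next I would fix an arbitrary $\ell>0$ and apply Proposition~\ref{prop52} to produce a discontinuous ground state $\underline{U}$ of \eqref{GS} with jump discontinuity points precisely at $\pm\underline{x}_0$ with $0<\underline{x}_0\leq\ell$ such that
\bqs
\underline{U}(x)\leq \underset{t\to+\infty}{\liminf}\,u_\ell(t,x)\quad \text{uniformly in } x\in\R\setminus\{\pm\underline{x}_0\}.
\eqs
Then I would apply Proposition~\ref{prop53} to the same $\ell$ to produce another discontinuous ground state $\overline{U}$ of \eqref{GS} with jump discontinuity points precisely at $\pm\overline{x}_0$ with $\overline{x}_0\geq\ell$ and
\bqs
\underset{t\to+\infty}{\limsup}\,u_\ell(t,x)\leq \overline{U}(x)\quad \text{uniformly in } x\in\R\setminus\{\pm\overline{x}_0\}.
\eqs
Concatenating these two inequalities off the exceptional set $\{\pm\underline{x}_0,\pm\overline{x}_0\}$ yields the stated double bound.

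There is essentially no obstacle here beyond bookkeeping: both propositions have already done the hard work by constructing sub- and super-solutions of the form $U_{v_0}(x)\pm\eta\, e^{-\alpha t}$, so the corollary reduces to observing that the two constructions can be carried out on the same initial datum $\mathds{1}_{[-\ell,\ell]}$ without interference (they involve independent choices of $v_0$, one pushing $x_0(v_0)$ below $\ell$ and the other pushing it above $\ell$). The only minor point worth making explicit is that the exceptional set in the uniform convergence statement becomes $\{\pm\underline{x}_0,\pm\overline{x}_0\}$ because each of $\underline{U}$ and $\overline{U}$ is smooth away from its own pair of discontinuities, and the uniform liminf/limsup statements from Propositions~\ref{prop52} and~\ref{prop53} hold uniformly outside those pairs respectively.
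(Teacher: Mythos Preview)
Your proposal is correct and matches the paper's own approach exactly: the paper presents this corollary without a separate proof, explicitly introducing it as the combination of Proposition~\ref{prop52} and Proposition~\ref{prop53}. Your hypothesis-checking and the observation that the two sub/super-solution constructions are independent (yielding $0<\underline{x}_0\leq\ell\leq\overline{x}_0$) is precisely the intended bookkeeping.
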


\section{Numerical illustrations}\label{secnum}

In this section, we present our numerical results which illustrate and complement our theoretical findings of the previous sections. We first present the numerical scheme which we used in order to numerically approximate the solutions of the Cauchy problem \eqref{edp} and then present and comment our numerical results. In particular, one of our main goal is to numerically compute the values of $\ell_0^*$ and $\ell_1^*$ with high accuracy for any given nonlinearity $f$, kernel $\K$ and coefficient diffusion $d>0$ satisfying our assumptions. In order to achieve this objective, we decided to use a high order numerical scheme both in space and time which is presented in the first subsection.  Most interestingly, our numerical explorations allowed us to conjecture an analytical formula for the value $\ell_1^*$ in a well identified region of parameters for the special case of the cubic nonlinearity and the exponential kernel function.

\subsection{Numerical method}

In order to numerically compute the solutions of the Cauchy problem \eqref{edp}, we use the splitting method introduced by Descombes \& Schatzman \cite{DS95}, which is formally of order $4$ (see \cite{Desc01} and references therein). This splitting method is obtained by applying a Richardson's extrapolation to the standard Strang's method \cite{Strang}. More precisely, let us first denote by $\mathcal{S}^t$ the flow associated to \eqref{edp}, that is, to a given initial condition $u_0$, the solution to \eqref{edp} at time $t>0$ is given by $\mathcal{S}^tu_0$. Next, we introduce two intermediate Cauchy problems. The first one is the following nonlocal diffusion equation
\bqq
\label{heat}
\left\{
\begin{split}
\partial_t v &= d\left(-v+\K *v\right), \quad t>0 \quad x\in\R, \\
v(t=0,x) & = v_0(x), \quad x\in\R,
\end{split}
\right.
\eqq
whose flow is denoted by $\mathcal{D}^t$ for $t>0$. While the second one is given by
\bqq
\label{fode}
\left\{
\begin{split}
\partial_t w &= f(w), \quad t>0 \quad x\in\R, \\
w(t=0,x) & = w_0(x), \quad x\in\R,
\end{split}
\right.
\eqq
and we denote its flow by $\mathcal{R}^t$ for $t>0$. We denote by $\mathcal{Z}^t:=\mathcal{R}^{t/2}\mathcal{D}^t\mathcal{R}^{t/2}$ for $t>0$ the composition of the flows generated by \eqref{heat} and \eqref{fode}, which precisely corresponds to Strang's method \cite{Strang}. The splitting method introduced in \cite{DS95} and that we have used to produce our numerical simulations is defined as
\bqs
\mathcal{Y}^t:=\frac{4}{3}\mathcal{Z}^{t/2}\mathcal{Z}^{t/2}-\frac{1}{3}\mathcal{Z}^t, \quad t>0.
\eqs
We have used an explicit Runge-Kutta method of order 4 to solve the ODE problem \eqref{fode}, while we have relied on a spectral method to solve the nonlocal diffusion equation \eqref{heat} by noticing that 
\bqs
(\mathcal{D}^tv_0)(x) =  \frac{1}{2\pi}\int_\R \mathrm{e}^{\mathbf{i} \xi x} \mathrm{e}^{d(-1+\widehat{\K}(\xi))t} \widehat{v_0}(\xi)\md \xi, \quad t>0, \quad x \in \R,
\eqs
where $\widehat{\K}$ and $ \widehat{v_0}$ denote the Fourier transform of $\K$ and $v_0$ respectively. For $\K(x)=\mathrm{e}^{-|x|}/2$ we have the exact formula $\widehat{\K}(\xi) = (1+\xi^2)^{-1}$.

Regarding our time and space discretization, we typically used $N=2^{14}$ points for the spectral method and a time step of $\Delta t=0.01$. The spatial domain was set to $[-10\pi,10\pi]$ while the final time of computation was adapted to the situation under investigation but never exceeded $T_f=500$. Our numerical findings are reported in the next section.

\subsection{Results for the cubic nonlinearity and the exponential kernel}

\begin{figure}[t!]
  \centering
  \includegraphics[width=.475\textwidth]{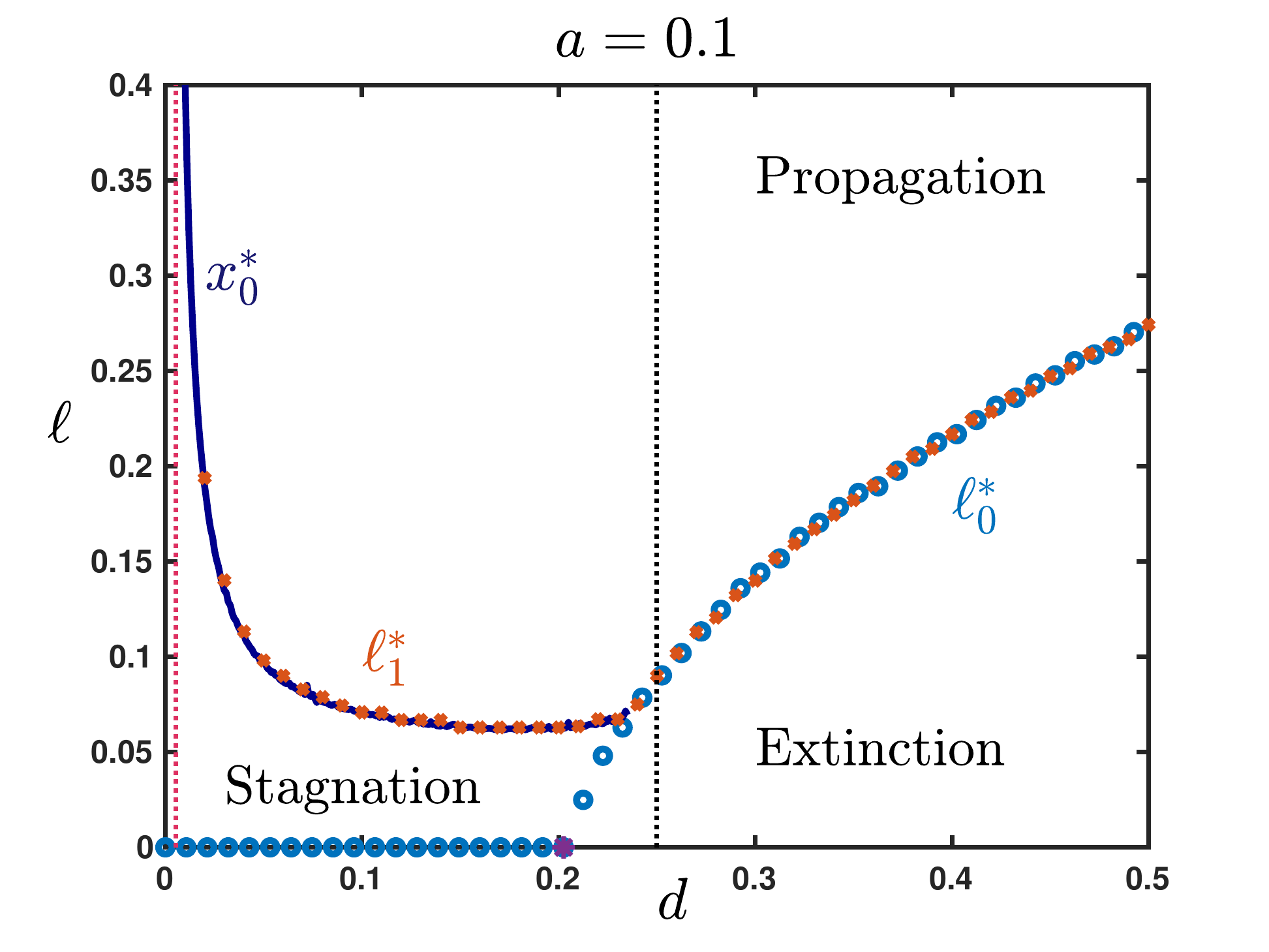}
  \includegraphics[width=.475\textwidth]{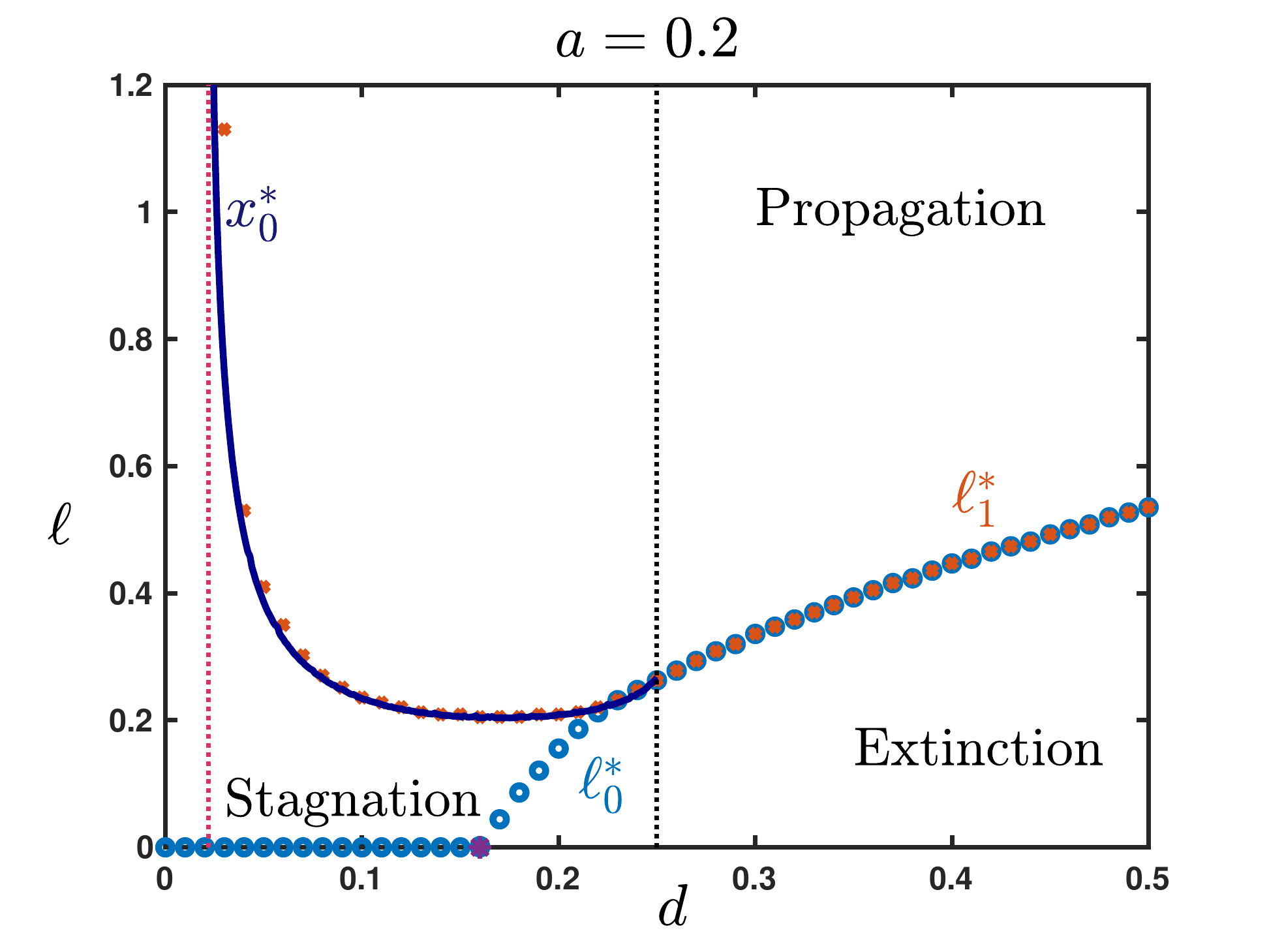}
  \includegraphics[width=.475\textwidth]{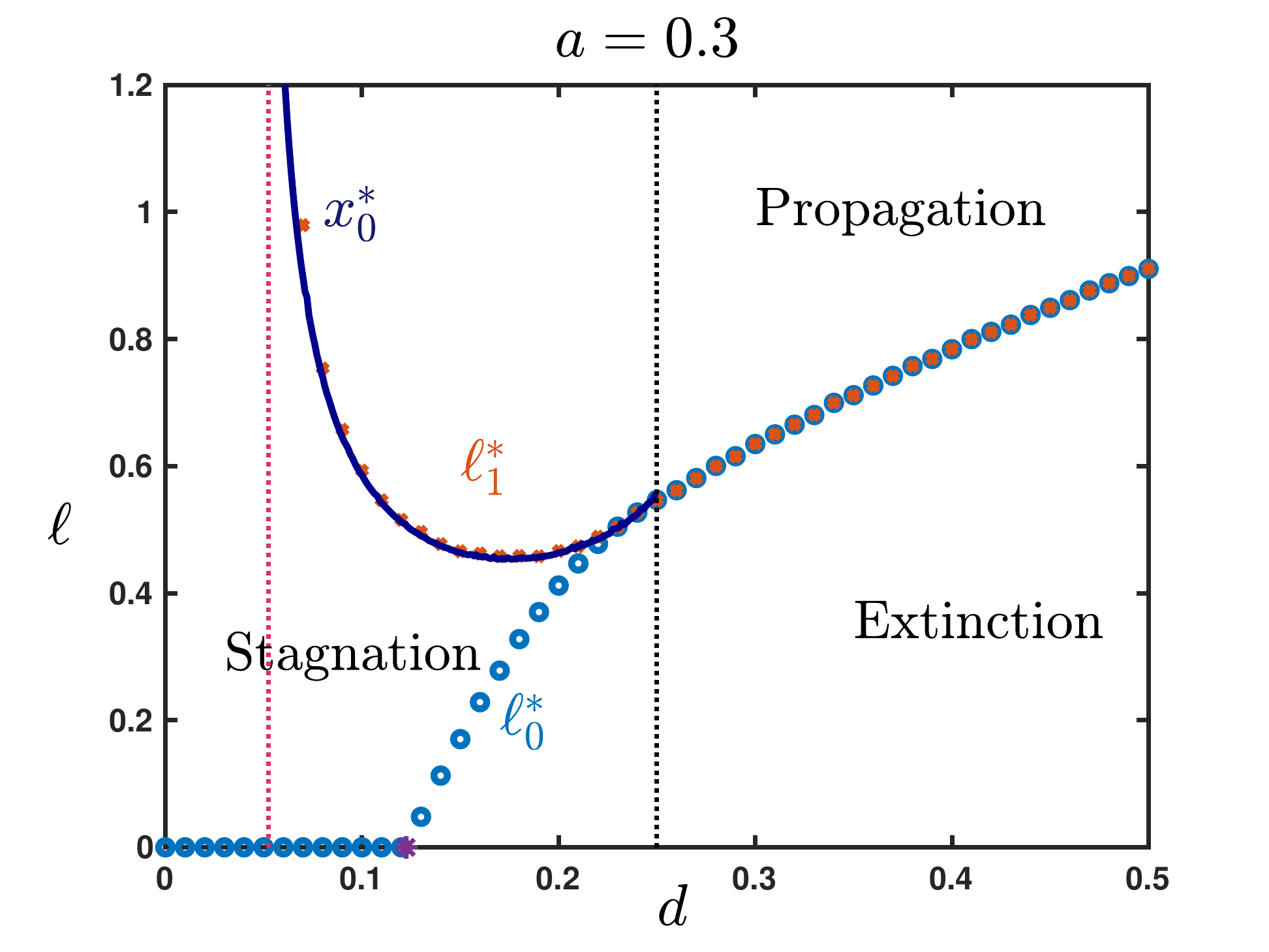}
  \includegraphics[width=.475\textwidth]{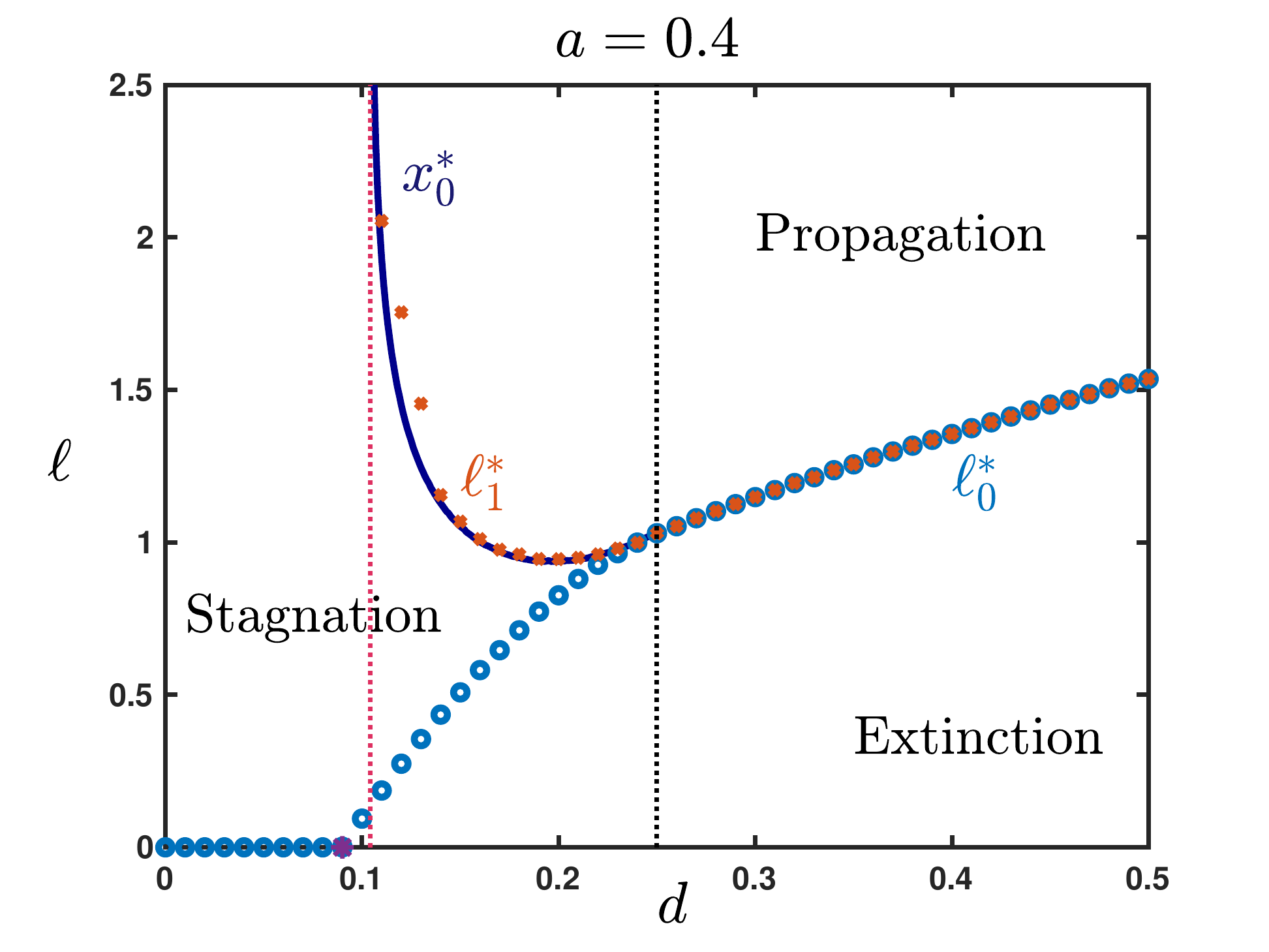}
  \caption{For fixed $a\in(0,1/2)$, we report the long time behavior of the solutions $u_\ell$ of the Cauchy problem for each $d\in(0,1/2)$ as $\ell>0$ is varied. The long time behavior is either extinction, propagation or stagnation. We obtain sharp thresholds between each regime. In the legend of each figure, $\ell_0^*$ represents the onset for extinction while $\ell_1^*$ is the onset for propagation. In all cases, we have that whenever $d\geq 1/2$, there is a sharp threshold between extinction and propagation with $\ell_0^*=\ell_1^*>0$. The pink doted line represents at each $a$ the boundary of the pinning region, below which no propagation can occur. We also remark that extinction only occurs above some threshold value which is marked by a magenta star.}
  \label{fig:seuil}
\end{figure}

We report in this section our results obtained by direct numerical simulations of the Cauchy problem \eqref{edp} using the numerical algorithm depicted in the previous section. Throughout, we have set the nonlinearity to be the cubic $f_a(u)=u(1-u)(u-a)$ with $a\in(0,1/2)$ as parameter, and the kernel to be the exponential function $\K(x)=\mathrm{e}^{-|x|}/2$. For each fixed $a\in(0,1/2)$ and $d \in(0,1/2)$, we have solved \eqref{edp} for each $\ell \in (0,\ell_{max})$ using different criteria to stop the computations to discriminate between the different possible asymptotic behaviors: propagation, extinction or stagnation.

\paragraph{Numerical strategy to evaluate $\ell_0^*$ and $\ell_1^*$.} To numerically compute $\ell_0^*$, whose definition is given by 
\bqs
\ell_0^*= \sup\left\{ \ell>0 ~|~  \underset{t\to+\infty}{\lim} u_\ell(t,\cdot)=0 \text{ uniformly in } \R \right\},
\eqs
for each fixed $a\in(0,1/2)$ and $d \in(0,1/2)$, we ran our numerical scheme from $t=0$ to some final time $t=T_f$ for all $\ell \in (0,\ell_{max})$  . For each $\ell>0$, we evaluated the maximum of the numerically computed solution at the last time step. We then discriminated the precise value of $\ell$ for which there is a transition from extinction, maximum being close to zero, to either propagation or stagnation, maximum being strictly larger than the unstable steady state $a$. Note that if the solution were to pass below $a$ before the final time $T_f$, we stop the computation and declare that there is extinction in that case. In all our figures, $\ell_0^*$ is represented by blue circle. We adopted a similar strategy for the computation of $\ell_1^*$, whose definition is given by 
\bqs
\ell_1^*= \inf\left\{ \ell>0 ~|~  \underset{t\to+\infty}{\lim} u_\ell(t,\cdot)=1 \text{ locally uniformly in } \R \right\}.
\eqs
To discriminate more accurately between stagnation and propagation, we measured the mass of the solution and declared propagation when we simultaneously had an increasing mass and a corresponding solution with maximum being close to one at the final time on a fixed interval. In all our figures, $\ell_1^*$ is represented by red stars.

\begin{figure}[t!]
  \centering
  \includegraphics[width=.475\textwidth]{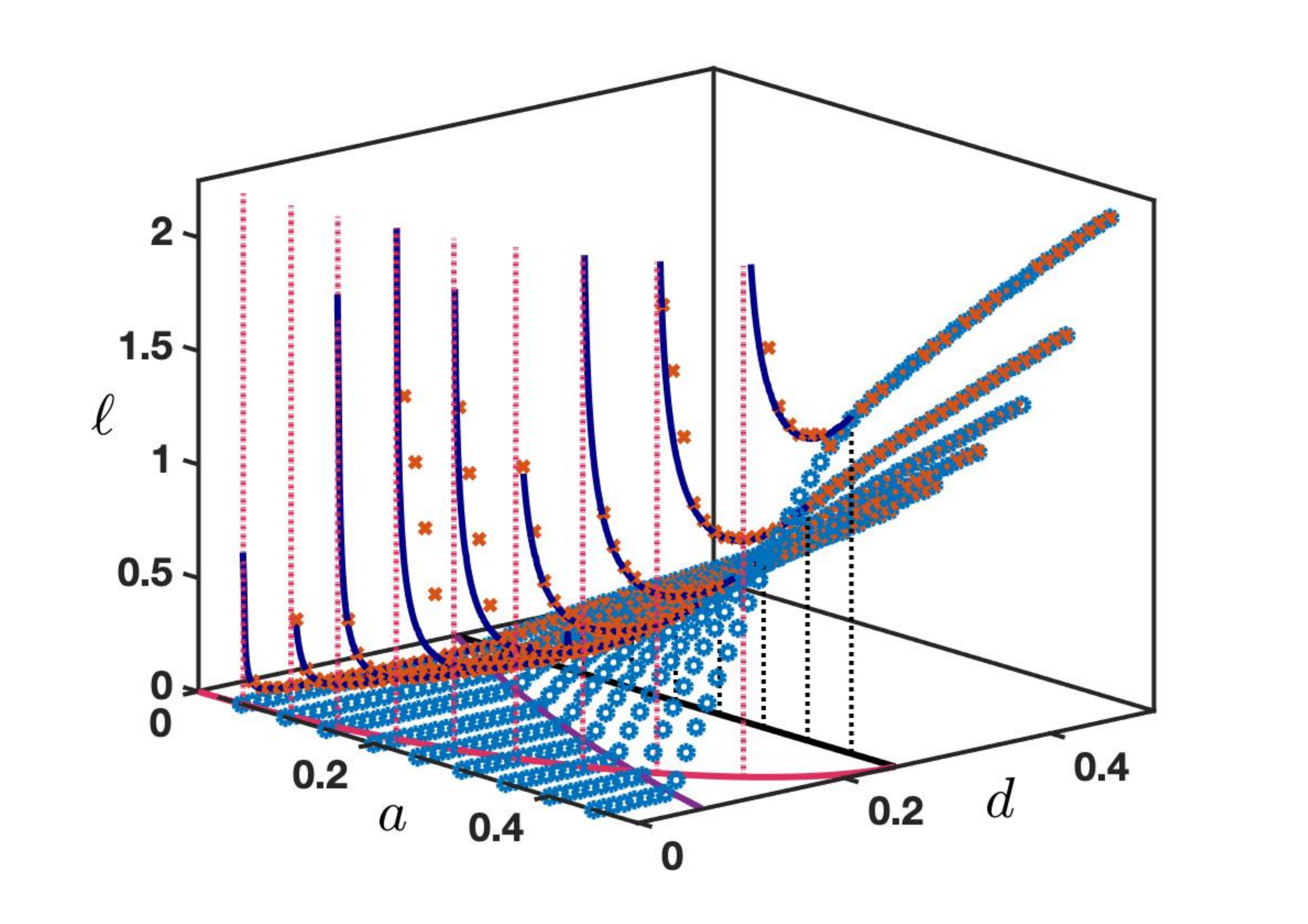}
  \includegraphics[width=.475\textwidth]{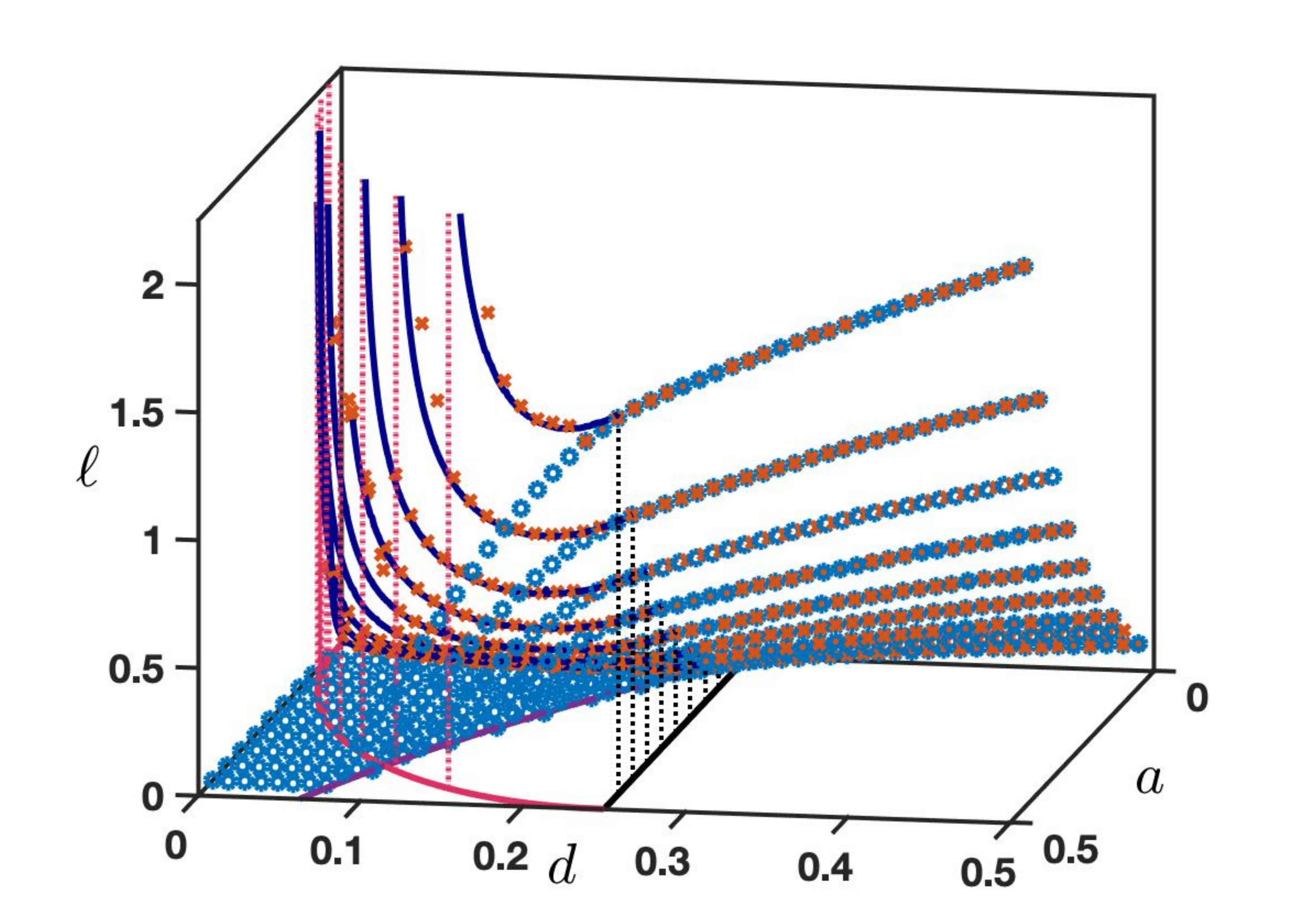}
  \caption{A three dimension visualization of the results presented in Figure~\ref{fig:seuil} with two different views.}
  \label{fig:seuil3D}
\end{figure}

\paragraph{Sharp thresholds.} In Figure~\ref{fig:seuil}, we show typical diagrams for values of the parameter $a$ in $\left\{0.1,0.2,0.3,0.4\right\}$, while in Figure~\ref{fig:seuil3D} we present a three dimension visualization of the results. In each panel of Figure~\ref{fig:seuil}, we have represented by a doted pink vertical line the boundary of the the pinning region, while the black doted vertical line is always at $d=1/4$. We first observe that $\ell_0^*=0$ whenever $d<d_\mathrm{ext}(a)$ where $d_\mathrm{ext}(a)=\frac{(1-a)^2}{4}$ is represented by the purple star in the figures, while $0<\ell_0^*<+\infty$ for any $d>d_\mathrm{ext}(a)$. This numerically indicates that the condition of Proposition~\ref{propext} is sharp. On the other hand, a simple computation shows that if $\gamma\in(a,1)$ is the minimum of $g$, that is $g'(\gamma)=0$, we get that $g(\gamma)=0$ if and only if $d=d_\mathrm{ext}(a)$. As a consequence, the condition $g(u_+)<0$ derived in Proposition~\ref{prop51} is not sharp, since our numerics indicate that $g(\gamma)<0$ (or equivalently $d<d_\mathrm{ext}(a)$) is the sharp condition. Next we observe that  for small values of $0<d<d_\mathrm{pin}(a)$ we have $\ell_1^*=+\infty$, while for $d>d_\mathrm{pin}(a)$ one gets $0<\ell_1^*<+\infty$, with $d_{\mathrm{pin}}(a)=\frac{1}{3}\left( 1-a+a^2-\sqrt{1-2a}\right)$ for $a\in(0,1/2)$. As a consequence, we numerically corroborate the condition of Proposition~\ref{proppropa} which assesses that in order to have propagation one needs to be in the region of parameter space where there exists a traveling front solution with non zero wave speed: this is precisely characterized by the condition $d>d_\mathrm{pin}(a)$ of being outside the pinning region. We further note that, independently of the value of $a\in(0,1/2)$, and as long as $d>1/4$, there is a sharp threshold between propagation and extinction, that is $0<\ell^*:=\ell_0^*=\ell_1^*<+\infty$, with extinction for $\ell \in(0,\ell^*)$ and propagation for $\ell>\ell^*$. We denote this region of parameter space as $\mathcal{R}_1:=\left\{ (a,d) ~|~d>1/4 \text{ and } 0<a<1/2 \right\}$. We remark that this is the usual result that one obtains in the local diffusion case \cite{Zlatos,DM10,MZ13}. Below $d<1/4$ several different situations can happen. 

\begin{figure}[t!]
  \centering
  \includegraphics[width=.475\textwidth]{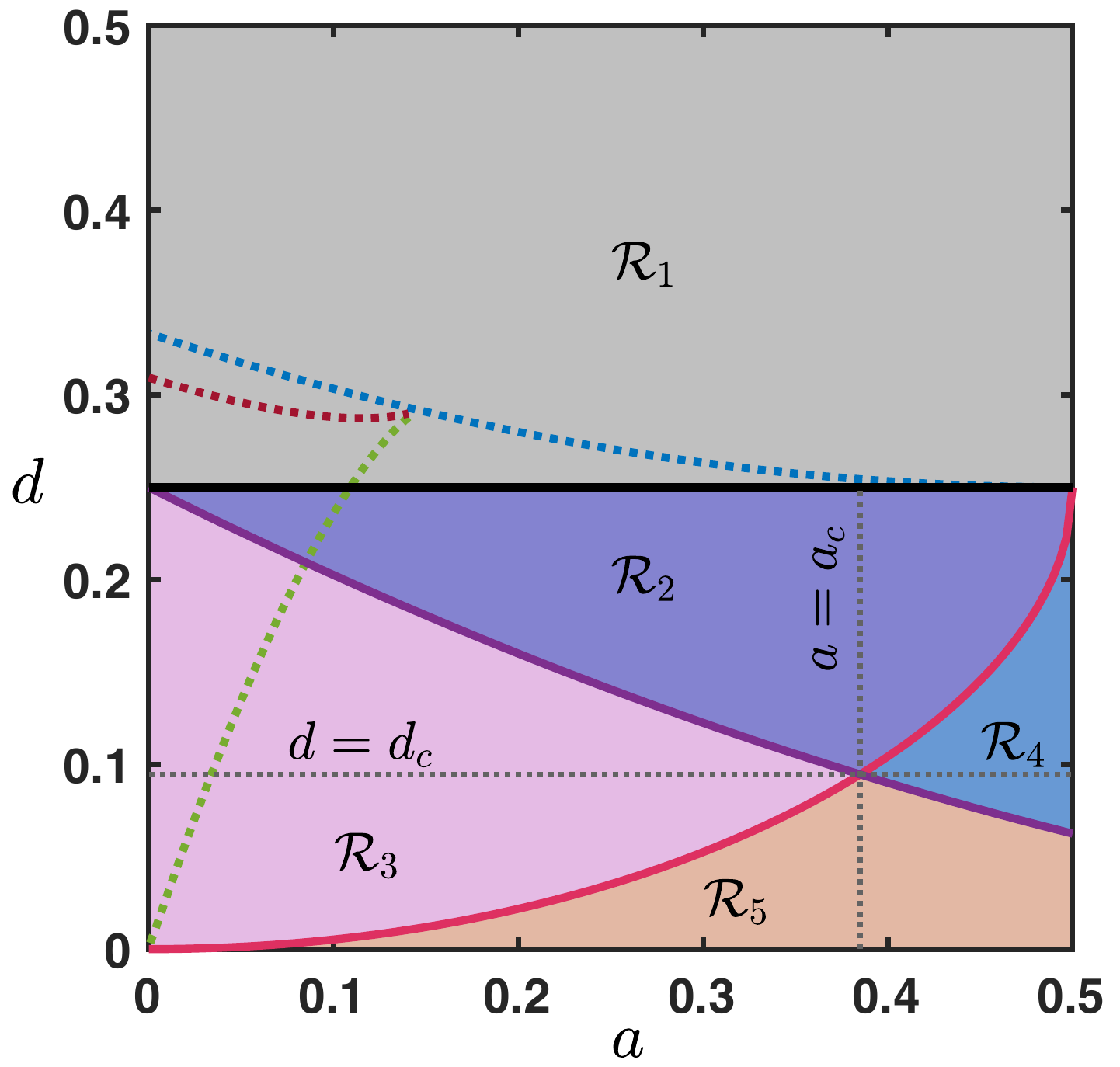}
  \caption{Numerically computed regions $\mathcal{R}_j$, $j=1,\cdots,5$, for the cubic nonlinearity and the exponential kernel $\K(x)=\exp(-|x|)/2$. The boundary between regions $\mathcal{R}_1$ and $\mathcal{R}_2$ appear to be given by $d=1/4$ (black curve). The boundary between $\mathcal{R}_2\cup\mathcal{R}_3$ and $\mathcal{R}_4\cup\mathcal{R}_5$ coincides with the boundary of the pinning region (pink curve) given by $d=d_{\textrm{pin}}(a)$. Finally, the boundary between $\mathcal{R}_2\cup\mathcal{R}_4$ and $\mathcal{R}_3\cup\mathcal{R}_5$ coincides with the parametrized curve given by $d=d_{\textrm{ext}}(a)$ (magenta curve). Note that both curves intersect at a unique point $(a_c,d_c)\simeq (0.3850,0.9455)$. The three dashed lines correspond to the three curves of Figure~\ref{fig:DiagGS} associated to the existence of ground state solutions.}
  \label{fig:RegionsRj}
\end{figure}

Let us introduce $a_c\in(0,1/2)$ solution of $d_{\mathrm{pin}}(a)=d_{\mathrm{ext}}(a)$, for which we have the approximation $a_c\simeq 0.3850$. For $0<a<a_c$, we have that $0<d_{\mathrm{pin}}(a)<d_{\mathrm{ext}}(a)<1/4$ and:
\begin{itemize}
\item for $d_{\mathrm{ext}}(a)<d<1/4$, we have this time $0<\ell_0^*<\ell_1^*<\infty$, such that for all $\ell \in(0,\ell_0^*)$, there is extinction, for all $\ell \in(\ell_0^*,\ell_1^*)$ there is stagnation, and for all $\ell>\ell_1^*$ there is propagation. We denote this region of parameter space as $\mathcal{R}_2^{<}:=\left\{d_{\mathrm{ext}}(a) <d<1/4 \text{ and } 0<a<a_c \right\}$.
\item for $d_{\mathrm{pin}}(a)<d<d_{\mathrm{ext}}(a)$, we have $\ell_0^*=0$ with $0<\ell_1^*<\infty$, such that for all $\ell \in(0,\ell_1^*)$ there is stagnation, and for all $\ell>\ell_1^*$, there is propagation. We denote this region of parameter space $\mathcal{R}_3:=\left\{d_{\mathrm{pin}}(a)< d<d_{\mathrm{ext}}(a)  \text{ and } 0<a<a_c \right\}$.
\item for $0<d<d_{\mathrm{pin}}(a)$, we have $\ell_0^*=0$ with $\ell_1^*=\infty$ such that there is only stagnation with convergence asymptotically towards a ground state solution for each $\ell>0$. We denote this region of parameter space as $\mathcal{R}_5^{<}:=\left\{0<d<d_{\mathrm{pin}}(a) \text{ and } 0<a<a_c \right\}$.
\end{itemize}
On the other hand for $a_c<a<1/2$, we have that $0<d_{\mathrm{ext}}(a)<d_{\mathrm{pin}}(a)<1/4$ and:
\begin{itemize}
\item for $d_{\mathrm{pin}}(a)<d<1/4$, we have this time $0<\ell_0^*<\ell_1^*<\infty$, such that for all $\ell \in(0,\ell_0^*)$, there is extinction, for all $\ell \in(\ell_0^*,\ell_1^*)$ there is stagnation, and for all $\ell>\ell_1^*$ there is propagation. We denote this region $\mathcal{R}_2^{>}:=\left\{d_{\mathrm{pin}}(a)<d<1/4 \text{ and }a_c<a<1/2 \right\}$.
\item for $d_{\mathrm{ext}}(a)<d<d_{\mathrm{pin}}(a)$, we have $0<\ell_0^*<\infty$ with $\ell_1^*=\infty$, such that for all $\ell \in(0,\ell_0^*)$ there is extinction, and for all $\ell>\ell_0^*$, there is stagnation. We denote this region  $\mathcal{R}_4:=\left\{d_{\mathrm{ext}}(a)< d<d_{\mathrm{pin}}(a)  \text{ and } a_c<a<1/2 \right\}$.
\item for $0<d<d_{\mathrm{ext}}(a)$, we have $\ell_0^*=0$ with $\ell_1^*=\infty$ such that there is only stagnation for each $\ell>0$. We denote this region  $\mathcal{R}_5^{>}:=\left\{0<d<d_{\mathrm{ext}}(a) \text{ and } a_c<a<1/2 \right\}$.
\end{itemize}
Finally, at $a=a_c$, we have that $0<d_c:=d_{\mathrm{ext}}(a_c)=d_{\mathrm{pin}}(a_c)<1/4$ and:
\begin{itemize}
\item for $d_c<d<1/4$, we have $0<\ell_0^*<\ell_1^*<\infty$, such that for all $\ell \in(0,\ell_0^*)$, there is extinction, for all $\ell \in(\ell_0^*,\ell_1^*)$ there is stagnation, and for all $\ell>\ell_1^*$ there is propagation. We denote this region $\mathcal{R}_2^{=}:=\left\{d_c<d<1/4 \text{ and } a=a_c \right\}$.
\item for $0<d<d_c$, we have $\ell_0^*=0$ with $\ell_1^*=\infty$ such that there is only stagnation for each $\ell>0$. We denote this region  $\mathcal{R}_5^{=}:=\left\{0<d<d_c \text{ and } a=a_c\right\}$.
\end{itemize}
As a consequence, we can partition the parameter space $(a,d)$ into five different regions, denoted $\mathcal{R}_j$ whose boundaries are given by the horizontal line $d=1/4$ and the two curves $d=d_{\mathrm{pin}}(a)$ and $d=d_{\mathrm{ext}}(a)$. Here we have set $\mathcal{R}_{2,5}:=\mathcal{R}_{2,5}^{<}\cup\mathcal{R}_{2,5}^{=}\cup\mathcal{R}_{2,5}^{>}$. We refer to Figure~\ref{fig:RegionsRj} for an illustration and to Table~\ref{table} for a summary of the possible asymptotic behaviors in each region.

\begin{table}[t!]
\begin{center}
\begin{tabular}{c c c c}
\hline
\hline
Region & \parbox[t]{1.1in}{\centering Propagation} & \parbox[t]{1.3in}{\centering Extinction} & \parbox[t]{1.2in}{\centering Stagnation}  \\
\hline
\hline
$\mathcal{R}_{1}$ & yes & yes & no  \\
 $\mathcal{R}_{2}$ & yes & yes & yes  \\
 $\mathcal{R}_{3}$ & yes & no & yes  \\
  $\mathcal{R}_{4}$ & no & yes & yes   \\
    $\mathcal{R}_{5}$ & no & no & yes   \\
 \hline
 \hline
\end{tabular}
\end{center}
\caption{Overview of different types of possible asymptotic behaviors for the solutions of the Cauchy problem \eqref{edp} for the cubic nonlinearity and the exponential kernel $\K(x)=\exp(-|x|)/2$ for open connected intervals of the parameter $\ell>0$. Regions are outlined in parameter space $(a,d)$ in Figure~\ref{fig:RegionsRj}, below.}
\label{table}
\end{table}

\paragraph{An analytical formula for $\ell_1^*$ in $\mathcal{R}_2\cup\mathcal{R}_3$.} In the region $\mathcal{R}_2\cup\mathcal{R}_3$, that is for $d_\mathrm{pin}(a)<d<1/4$ with $0<a<1/2$ where $0<\ell_1^*<\infty$, we manage to derive an analytical formula for $\ell_1^*$ which is represented by the dark blue curve in Figure~\ref{fig:seuil} and labelled $x_0^*$. Since $0<d<1/4$, we are in the region of parameters where $0<\beta<\gamma<1$ with $g'(\beta)=g'(\gamma)=0$. Note also that from Subsection~\ref{subsecapplcubic} and Figure~\ref{fig:DiagGS}, the region $\mathcal{R}_2\cup\mathcal{R}_3$ is strictly contained within the region of parameters where $\mathcal{G}_-(g(\gamma))<0$. As a consequence, using Lemma~\ref{lem43} and Lemma~\ref{lem45}, for all each $(a,d) \in \mathcal{R}_2\cup\mathcal{R}_3$ such that $\mathcal{G}_-(g(\beta)) \leq 0$, there exists a parameter family of ground state solutions with points of discontinuity at $\pm x_0(v_0)$ for each $v_0\in[g(\gamma),g(\beta)]$ and $v_0>0$. In that case, we define
\bqq
x_0^*:=\underset{v_0\in[g(\gamma),g(\beta)],~v_0>0}{\sup}~ x_0(v_0).
\label{eqx0s1}
\eqq
On the other hand, using Lemma~\ref{lem44}(ii), for all each $(a,d) \in \mathcal{R}_2\cup\mathcal{R}_3$ such that $\mathcal{G}_-(g(\beta))> 0$, there exists a parameter family of ground state solutions with point discontinuity at $\pm x_0(v_0)$ for each $v_0\in[g(\gamma),v_m]$ and $v_0>0$ where $v_m\in(0,g(\beta))$ is solution of $\mathcal{G}_-(v_m)= 0$. In that case, we define
\bqq
\label{eqx0s2}
x_0^*:=\underset{v_0\in[g(\gamma),v_m],~v_0>0}{\sup}~ x_0(v_0).
\eqq
In both cases, $x_0^*>0$ is the largest right point of discontinuity of the family of discontinuous ground state solutions. In fact, it is possible to extrapolate the value $v_0^*>0$ at which the maximum is attained. We first remark that in region $\mathcal{R}_2\cup\mathcal{R}_3$ where $\mathcal{G}_-(g(\beta)) \leq 0$, we always have that $a\leq g(\beta)$ with equality along the curve $d=a-a^2$. Next, we remark that the condition $g(\gamma)=a$ is equivalent to $d=1/4$, such that for any $(a,d)\in \mathcal{R}_2\cup\mathcal{R}_3$ with $\mathcal{G}_-(g(\beta)) \leq 0$, we have $g(\gamma)< a \leq g(\beta)$. Furthermore, the map $v\mapsto v\mapsto \sqrt{-2\mathcal{G}_-(v)}$ defined on $[0,g(\beta)]$ always has a unique maximum at $v=a$ since $\mathcal{G}_-'(v)=-v+g_-^{-1}(v)$. We find that the maximum is achieved precisely for $v_0>0$ which maximizes $v\mapsto \sqrt{-2\mathcal{G}_-(v)}$, that is $v_0^*=a$ and $x_0^*=x_0(a)>0$. A similar argument applies to the other case by noticing that we always have $a\leq v_m$ since $\mathcal{G}_-(v_m)=0$.

Note that the value $x_0(v_0)$ can be numerically computed by evaluating the integral formula \eqref{defx0}, and we have used a standard trapezoidal rule. In each panel of Figure~\ref{fig:seuil}, we have a remarkable agreement between our numerically computed value of $\ell_1^*$ by direct simulations of the long time behavior of the solutions of the Cauchy problem \eqref{edp} and our conjectured formula through the expression of $x_0^*$. This allows us to formulate the following conjecture. 

\begin{conj} Assume that $\K(x)=\mathrm{e}^{-|x|}/2$ and that $f_a(u)=u(1-u)(u-a)$ with $0<a<1/2$. Let $(a,d)\in \mathcal{R}_2\cup\mathcal{R}_3$ such that $0<\ell_1^*<\infty$ is well defined from Proposition~\ref{proppropa}. Let $x_0^*>0$ be the right point of discontinuity of the discontinuous ground state solution associated to $v_0=a$. Then we have $\ell_1^*=x_0^*$.
\end{conj}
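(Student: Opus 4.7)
The conjecture splits into the two inequalities $\ell_1^* \le x_0^*$ and $\ell_1^* \ge x_0^*$. The structural observation that $v_0^* = a$ follows from $\mathcal{G}_-'(v) = -v + g_-^{-1}(v)$: critical points of the map $v \mapsto \sqrt{-2\mathcal{G}_-(v)}$ correspond to fixed points $g(v) = v$, i.e.\ zeros of $f_a$, and on the admissible interval the unique interior candidate is $v = a$. Combined with the continuity of $v_0 \mapsto x_0(v_0)$ from Lemma~\ref{lemx0}(iv), this pins down where the integral formula \eqref{defx0} attains its maximum, and it is exactly at this critical value that the supersolution and subsolution constructions below become sharp.

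\textbf{Direction $\ell_1^* \ge x_0^*$ (non-propagation for $\ell < x_0^*$).} The plan is to adapt the supersolution construction of Proposition~\ref{prop53}. Given $\ell < x_0^*$, continuity of $x_0(\cdot)$ furnishes $v_0$ close to $a$ with $x_0(v_0) > \ell$, and we try $\overline{u}(t,x) = U_{v_0}(x) + \delta e^{-\alpha t}$, where $U_{v_0}$ is the associated discontinuous ground state. The pointwise inequality $\mathscr{N}(\overline{u}) \ge 0$ is verified on the two ranges $|x| \gtrless x_0(v_0)$ via the signs of $f'(0)$ and $f'(1)$, exactly as in Proposition~\ref{prop53}. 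The genuine technical hurdle is arranging the initial comparison $\mathds{1}_{[-\ell,\ell]}(x) \le U_{v_0}(x) + \delta$ for every $x \in \R$: this forces the ground-state peak $g_+^{-1}(v_*(v_0))$ to exceed $1 - \delta$, which is not automatic throughout $\mathcal{R}_2 \cup \mathcal{R}_3$ and must be engineered by refining $v_0$ along the family, exploiting precisely the fact that at $v_0 = a$ the inner trajectory $V_+$ reaches its largest turning point $v_*(a)$.

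\textbf{Direction $\ell_1^* \le x_0^*$ (propagation for $\ell > x_0^*$).} For such $\ell$, no discontinuous ground state of \eqref{GS} has jumps at $\pm \ell$, and we want $u_\ell(t,\cdot) \to 1$ locally uniformly. The natural route exploits the $L^2$ gradient-flow structure of \eqref{edp} for $\mathscr{E}$: the energy $t \mapsto \mathscr{E}(u_\ell(t,\cdot))$ is nonincreasing, so the $\omega$-limit set of the orbit consists of critical points of $\mathscr{E}$, namely $0$, $1$, and the ground states classified in Section~\ref{secGS}. Extinction is excluded because $\ell > x_0^* \ge \ell_0^*$ in $\mathcal{R}_2 \cup \mathcal{R}_3$, and convergence to a ground state is excluded since by translation and reflection symmetry its discontinuity location would have to equal $\ell > x_0^*$, contradicting the maximality of $x_0^*$. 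The main obstacle, and the reason the statement is recorded only as a conjecture, lies in this last step: establishing orbit precompactness in $L^\infty(\R)$ and a clean identification of $\omega$-limits for a nonlocal equation admitting discontinuous stationary profiles is genuinely delicate and appears to require tools beyond the comparison principle that powers the rest of the paper, likely of Morse-theoretic or mountain-pass flavor identifying $U_a$ as the saddle separating the basins of attraction of the trivial states.
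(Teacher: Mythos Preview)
The statement you are attempting is recorded in the paper as a \emph{conjecture}: the authors offer no proof whatsoever, only numerical evidence (the agreement between the simulated $\ell_1^*$ and the computed $x_0^*$ in Figure~\ref{fig:seuil}). So there is no ``paper's own proof'' to compare against; what you have written is a plausible outline of how a proof might be attempted, and you correctly flag at the end that it remains incomplete.

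That said, both directions of your sketch have gaps that go beyond the compactness issue you isolate.

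\textbf{On $\ell_1^* \ge x_0^*$.} Your analogy with Proposition~\ref{prop53} is structurally problematic. That proof works because inside the pinning region one has $x_0(v_0)\to+\infty$ and simultaneously $U_{v_0}\to 1$ on compact sets as $v_0\to v_c^-$; this is exactly what permits the initial comparison $\mathds{1}_{[-\ell,\ell]}\le U_{v_0}+\delta$ with $\delta$ small enough to keep $f'(\xi)<0$ on the relevant range. In $\mathcal{R}_2\cup\mathcal{R}_3$ one is \emph{outside} the pinning region: $x_0$ is bounded by $x_0^*$, and the ground-state maximum $g_+^{-1}(v_*(v_0))$ stays bounded strictly below $1$ uniformly in $v_0$. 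The gap $1-\max U_{v_0}$ is therefore of fixed positive size, forcing $\delta$ to be large, which in general destroys the sign condition on $f'$ in the computation of $\mathscr{N}(\overline u)$. Your suggestion that $v_0=a$ maximizes $v_*(v_0)$ does not help: even granting it, nothing ties $g_+^{-1}(v_*(a))$ to $1$.

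\textbf{On $\ell_1^* \le x_0^*$.} Two of your exclusion steps are themselves unproved. First, ruling out extinction via $x_0^*\ge \ell_0^*$ presupposes an inequality that is part of the numerical picture, not established analytically (in $\mathcal{R}_2$ one has $\ell_0^*>0$). Second, your claim that an $\omega$-limit ground state would have discontinuities exactly at $\pm\ell$ is not justified: the initial datum has jumps at $\pm\ell$, but the interface location can move under the flow, and the classification of Section~\ref{secGS} gives ground states with jumps at any $\pm x_0(v_0)$ in a whole interval. Ruling out convergence to \emph{some} ground state with $x_0(v_0)<\ell$ is the real content, and symmetry alone does not do it.

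In short, your outline captures the natural two-inequality decomposition and the relevance of the ground-state family, but neither inequality currently has a viable mechanism; this is consistent with the authors' decision to leave the statement as a conjecture.
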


\begin{rmk}
From the discussion above, we remark that the boundary between regions $\mathcal{R}_1$ and $\mathcal{R}_2$ given by $d=1/4$ should instead be reinterpreted with the equivalent condition $g(\gamma)=a$.
\end{rmk}

\subsection{Beyond exponential kernels}

\begin{figure}[t!]
  \centering
  \includegraphics[width=.5\textwidth]{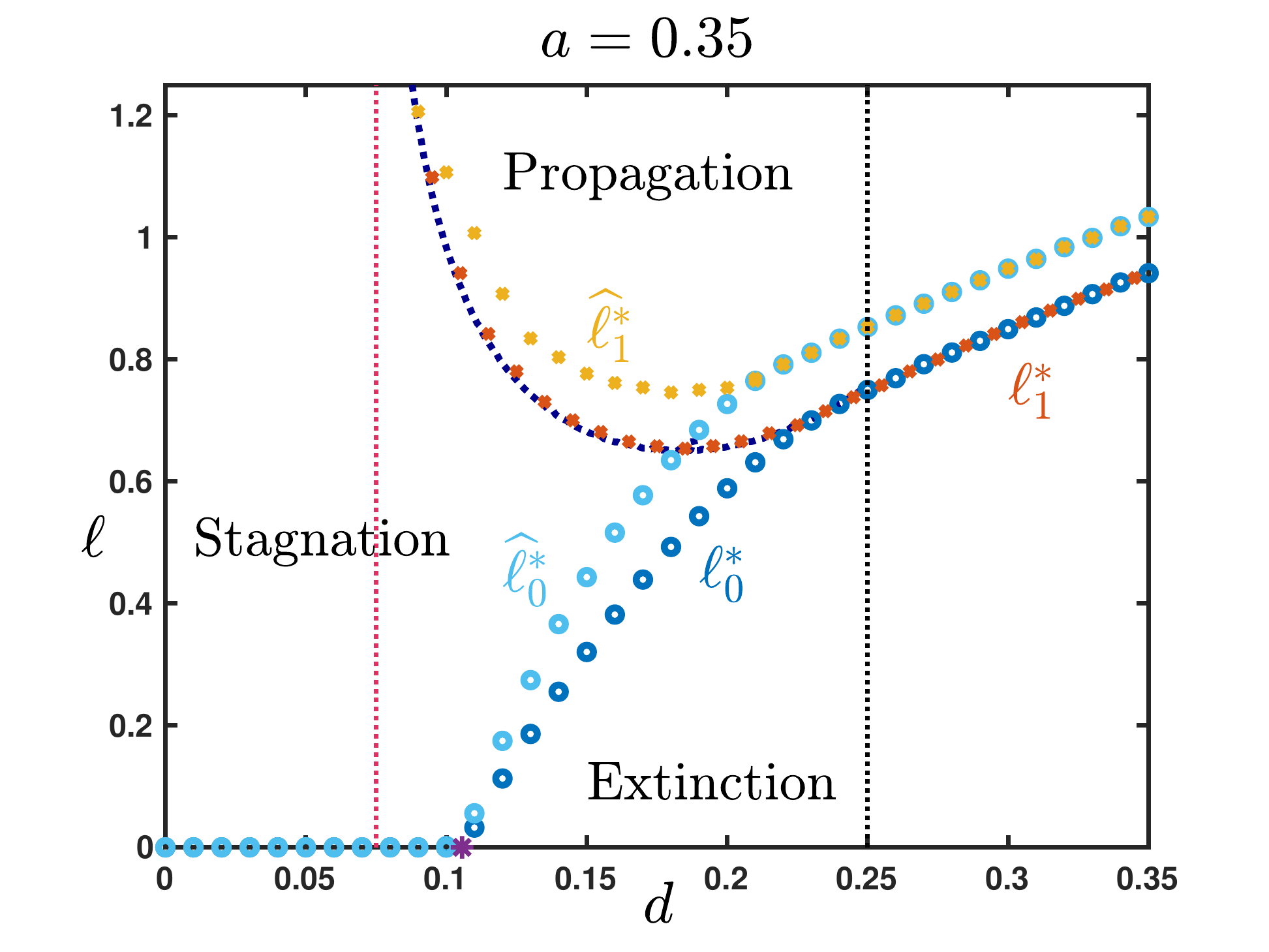}
  \caption{Numerically computed values of $\widehat{\ell}_0^*$ (light blue circles) and  $\widehat{\ell}_1^*$ (yellow stars) for the Gaussian kernel $\K(x)=\mathrm{e}^{-x^2/4}/\sqrt{4\pi}$ for fixed $a=0.35$ and the cubic nonlinearity. We also reported the numerically computed values of $\ell_0^*$ (blue circles) and  $\ell_1^*$ (red stars) for the exponential kernel $\K(x)=\mathrm{e}^{-|x|}/2$ for comparison. Both $\ell_0^*$ and $\widehat{\ell}_0^*$ become positive at the same value for $d$ indicated by the magenta star given by $d_\mathrm{ext}(a)$ with $a=0.35$. On the other hand $\ell_1^*$ and $\widehat{\ell}_1^*$ become bounded only for those values of $d$ larger than $d_\mathrm{pin}(a)$ with $a=0.35$, indicated by the pink dashed vertical line. We obtain that $\widehat{\ell}_0^*=\widehat{\ell}_1^*$ for all $d\geq 1/4$. Note that for the Gaussian kernel the cusp between the two curves $\widehat{\ell}_0^*$ and  $\widehat{\ell}_1^*$ near $d=1/4$ is more degenerate.}
  \label{fig:SeuilGauss}
\end{figure}

We also numerically studied the case of a kernel given by a Gaussian $\K(x)=\mathrm{e}^{-x^2/4}/\sqrt{4\pi}$ with Fourier transform $\widehat{\K}(\xi)=\mathrm{e}^{-\xi^2}$, still with the cubic nonlinearity. We found that the regions $\mathcal{R}_j$, $j=1,\cdots,5$ remain unchanged. More precisely, if we denote by $\widehat{\ell}_0^*$ and  $\widehat{\ell}_1^*$  the thresholds for extinction and propagation respectively in the case of the Gaussian kernel, then
\begin{itemize}
\item in region $\mathcal{R}_1$, we have $0<\widehat{\ell}_0^*=\widehat{\ell}_1^*<\infty$;
\item in region $\mathcal{R}_2$, we have $0<\widehat{\ell}_0^*<\widehat{\ell}_1^*<\infty$;
\item in region $\mathcal{R}_3$, we have $0=\widehat{\ell}_0^*$ and $0<\widehat{\ell}_1^*<\infty$;
\item in region $\mathcal{R}_4$, we have $0<\widehat{\ell}_0^*<+\infty$ and $\widehat{\ell}_1^*=\infty$;
\item in region $\mathcal{R}_5$, we have $0=\widehat{\ell}_0^*$ and $\widehat{\ell}_1^*=\infty$.
\end{itemize}
We refer to Figure~\ref{fig:SeuilGauss} for a typical situation at the fixed value $a=0.35$. In the Gaussian case, determining the exact location in parameter space for which $\widehat{\ell}_0^*=\widehat{\ell}_1^*$ occurs is a much more delicate task than in the exponential case. Indeed, the cusp between the two curves $\widehat{\ell}_0^*$ and  $\widehat{\ell}_1^*$ is more degenerate. Actually,  we conjecture that the regions $\mathcal{R}_j$ are universal among the class of localized kernels with normalized second moment $\frac{1}{2}\int_\R x^2 \K(x)\md x=1$ and when $f$ is given by the cubic nonlinearity.

\begin{conj}\label{conjRj}
Assume that $f_a(u)=u(1-u)(u-a)$ with $0<a<1/2$. Let $\K$ satisfy Hypothesis~\ref{hypf} and Hypothesis~\ref{hypexploc} with normalized second moment $\frac{1}{2}\int_\R x^2 \K(x)\md x=1$. Let $\mathcal{R}_j$, $j=1,\cdots,5$, be the regions defined above. Then we have:
\begin{itemize}
\item[(i)] In region $\mathcal{R}_1$, there is a sharp threshold between extinction and propagation, that is $0<\ell_0^*=\ell_1^*<\infty$.
\item[(ii)] In region $\mathcal{R}_2$, there are two sharp thresholds between extinction and stagnation on the one hand and between  stagnation and propagation on the other hand, that is $0<\ell_0^*<\ell_1^*<\infty$;
\item[(iii)] In region $\mathcal{R}_3$,  there is a sharp threshold between stagnation and propagation, that is $0=\ell_0^*$ and $0<\ell_1^*<\infty$;
\item[(iv)] In region $\mathcal{R}_4$,  there is a sharp threshold between extinction and stagnation, that is $0<\ell_0^*<+\infty$ and $\ell_1^*=\infty$;
\item[(v)] In region $\mathcal{R}_5$, there is always stagnation, that is $0=\ell_0^*$ and $\ell_1^*=\infty$.
\end{itemize}
\end{conj}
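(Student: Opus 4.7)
The plan is to assemble Conjecture~\ref{conjRj} from four building blocks: the monotonicity of $\ell \mapsto u_\ell$ under the comparison principle, the large-diffusion results of Section~\ref{seclarge}, an extension of the small-diffusion Propositions~\ref{prop51}--\ref{prop53} beyond the exponential kernel, and a sharp-threshold argument in the spirit of Zlato\v{s}~\cite{Zlatos}, Du--Matano~\cite{DM10}, Pol\'a\v{c}ik~\cite{Pol11} and Muratov--Zhong~\cite{MZ13,MZ17}.

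First I would establish the qualitative partition of $(a,d)$-space already visible in Figure~\ref{fig:RegionsRj}. The comparison principle gives that $\ell \mapsto u_\ell(t,x)$ is nondecreasing, so the extinction set is a left-interval in $\ell$ and the propagation set a right-interval. For the cubic one computes $\kappa = \sup_{u\in(a,1)} f_a(u)/u = (1-a)^2/4 = d_{\mathrm{ext}}(a)$, and Proposition~\ref{propext} then yields $\ell_0^* > 0$ whenever $d > d_{\mathrm{ext}}(a)$, i.e. on $\mathcal{R}_1 \cup \mathcal{R}_2 \cup \mathcal{R}_4$. Symmetrically, Proposition~\ref{proppropa} combined with the existence criterion $d > d_{\mathrm{pin}}(a)$ for smooth traveling fronts of nonzero speed established in~\cite{BFRW,AFSS} yields $\ell_1^* < \infty$ on $\mathcal{R}_1 \cup \mathcal{R}_2 \cup \mathcal{R}_3$. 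The complementary statements $\ell_0^* = 0$ on $\mathcal{R}_3 \cup \mathcal{R}_5$ and $\ell_1^* = \infty$ on $\mathcal{R}_4 \cup \mathcal{R}_5$ form the substance of the small-diffusion part.

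For that small-diffusion block, the exponential-kernel proofs of Propositions~\ref{prop51}--\ref{prop53} and Corollary~\ref{cor} rely crucially on the phase-plane reduction $V - V'' = g_\pm^{-1}(V)$ and on the sweeping properties of $x_0(v_0)$ from Lemma~\ref{lemx0}. For a general $\K$ satisfying Hypothesis~\ref{hypexploc} with normalized second moment, I would replace this constructive analysis by a variational one: minimize
\bqs
\mathscr{E}(u) = \frac{d}{4}\int_{\R^2}\K(x-y)(u(x)-u(y))^2\,\md x\,\md y + \int_\R F(u(x))\,\md x
\eqs
on the cone of even, nonincreasing-on-$\R^+$ functions, parametrized by the one-sided trace $v_0$ at the prospective discontinuity point, following~\cite{CR99,BFRW}. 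The key analytic step is a continuity/sweeping lemma analogous to Lemma~\ref{lemx0}: as $v_0$ varies over the admissible range, the resulting half-width $x_0(v_0)$ varies continuously, tends to $0^+$ when $v_0 \to 0^+$, and diverges to $+\infty$ as $v_0$ approaches the boundary of the pinning region. Once this is in hand, the exponential-relaxation sub- and supersolutions $U(x) \pm \delta e^{-\alpha t}$ used in Section~\ref{secsmall} apply verbatim, delivering $\ell_0^* = 0$ and $\ell_1^* = \infty$ on the required regions and the stagnation statement on $\mathcal{R}_5$.

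The hardest block, and the one I expect to be \emph{the main obstacle}, is the sharp equality $\ell_0^* = \ell_1^*$ in $\mathcal{R}_1$ and the strict ordering $0 < \ell_0^* < \ell_1^* < \infty$ in $\mathcal{R}_2$. Even inside $\mathcal{R}_1$ the ground state structure is not uniform: above the curve $d = (1-a+a^2)/3$ Lemma~\ref{lem41} provides a unique smooth even ground state $U_s$, whereas below it one must invoke Lemmas~\ref{lem43}--\ref{lembetaeqgam} and handle possibly discontinuous ground states. Sharpness would follow if the stable manifold of every such ground state is codimension-one in the monotone cone of initial data, a picture proved in the local case by zero-number / intersection-number methods~\cite{Zlatos,DM10,Pol11,MZ13} that do not extend directly to nonlocal dispersal since the number of sign changes is not preserved. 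A viable substitute, and the technical heart of the argument, would be either (i) a nonlocal \L{}ojasiewicz--Simon inequality around each ground state combined with the $L^2$ gradient-flow structure of~\eqref{edp} with respect to $\mathscr{E}$, forcing every bounded orbit to converge to a stationary solution, together with a Perron--Frobenius argument showing that the linearization $\mathcal{L}\varphi = d(-\varphi + \K*\varphi) + f'(U_s)\varphi$ has a simple positive principal eigenvalue; or (ii) a monotone-semiflow comparison argument exploiting the total ordering of the one-parameter family of ground states constructed in the previous block. Modulo this classification, the strict inequality $\ell_0^* < \ell_1^*$ in $\mathcal{R}_2$ is immediate from the sweeping lemma: discontinuous ground states of arbitrarily small half-width $x_0$ trap the dynamics on a nonempty interval of $\ell$, opening the stagnation window; the matching equality in $\mathcal{R}_1$ is the absence of such a family.
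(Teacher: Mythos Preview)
The statement you are attempting to prove is labeled \emph{Conjecture}~\ref{conjRj} in the paper, and the paper does not provide a proof. Immediately after stating it the authors write: ``Proving such a conjecture is a major open problem that we leave for future work,'' and they note that only a fragment of part~(i) has been established elsewhere~\cite{ZL22}, for compactly supported kernels and in the sub-region of $\mathcal{R}_1$ above the curve $d=(1-a+a^2)/3$. There is therefore no paper proof to compare your proposal against; what you have written is a research outline for an open problem, not a proof of a result the paper claims.

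As an outline your decomposition is sensible, but two of your four blocks are genuinely open rather than merely technical. First, your proposed variational replacement for the phase-plane analysis of Section~\ref{secGS} is speculative: the results of~\cite{CR99} give existence of ground states for general $\K$, but they do not deliver the one-parameter family indexed by the trace $v_0$, nor the continuity and sweeping properties of $x_0(v_0)$ that Lemma~\ref{lemx0} provides in the exponential case. Without that sweeping lemma the sub/supersolution machinery of Section~\ref{secsmall} has nothing to feed on for general kernels, so the claims $\ell_0^*=0$ on $\mathcal{R}_3\cup\mathcal{R}_5$ and $\ell_1^*=\infty$ on $\mathcal{R}_4\cup\mathcal{R}_5$ remain unproven outside $\K(x)=e^{-|x|}/2$. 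Second, you correctly flag the sharp-threshold block as the main obstacle, and neither of your suggested routes is currently known to work: a nonlocal \L{}ojasiewicz--Simon inequality around possibly discontinuous ground states is not available in the literature, and the Perron--Frobenius spectral picture for $\mathcal{L}\varphi = d(-\varphi+\K*\varphi)+f'(U)\varphi$ when $U$ is discontinuous is delicate precisely because $f'(U)$ is discontinuous. These are the reasons the paper leaves the statement as a conjecture.
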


Proving such a conjecture is a major open problem that we leave for future work. We note that \cite{ZL22} has proved Conjecture~\ref{conjRj}(i) when the kernel is compactly supported, and in a sub-region of $\mathcal{R}_1$, namely the sub-region above the curve $d=\frac{1-a+a^2}{3}$ which corresponds to the region where $g'>0$ (indicated by the dashed blue lin in Figure~\ref{fig:RegionsRj}).

\section*{Acknowledgements} This research was initiated during Summer 2021 via an {\em intensive research internship program} funded by Labex CIMI under grant agreement ANR-11-LABX-0040. G. Faye also acknowledges support from the ANR via the project Indyana under grant agreement ANR- 21- CE40-0008 and from an ANITI (Artificial and Natural Intelligence Toulouse Institute) Research Chair.

\newpage

\bibliography{plain}

\end{document}